\newtheorem{thm}{Theorem}[section]
\newtheorem{prop}[thm]{Proposition}
\newtheorem{cor}[thm]{Corollary}
\newtheorem{lem}[thm]{Lemma}
\theoremstyle{definition}
\newtheorem{df}[thm]{Definition}
\newtheorem{rmk}[thm]{Remark}
\newtheorem{exm}[thm]{Example}
\newtheorem{const}[thm]{Construction}
\numberwithin{equation}{section}
\newcommand{\E}{\mathbb{E}}
\newcommand{\F}{\mathbb{F}}
\renewcommand{\L}{\mathbb{L}}
\renewcommand{\P}{\mathbb{P}}
\newcommand{\Q}{\mathbb{Q}}
\newcommand{\R}{\mathbb{R}}
\newcommand{\Sphere}{\mathbb{S}}
\newcommand{\W}{\mathbb{W}}
\newcommand{\Z}{\mathbb{Z}}
\newcommand{\cC}{\mathcal{C}}
\newcommand{\cD}{\mathcal{D}}
\newcommand{\cN}{\mathcal{N}}
\newcommand{\cO}{\mathcal{O}}
\newcommand{\cP}{\mathcal{P}}
\newcommand{\cS}{\mathcal{S}}
\newcommand{\rD}{\mathrm{D}}
\newcommand{\rH}{\mathrm{H}}
\newcommand{\ul}{\underline}
\newcommand{\ol}{\overline}
\DeclareSymbolFontAlphabet{\mathbb}{AMSb}
\DeclareSymbolFontAlphabet{\mathbbl}{bbold} 
\newcommand{\prism}{{\mathlarger{\mathbbl{\Delta}}}}
\newcommand{\pt}{\mathrm{pt}}
\DeclareMathOperator{\im}{im}
\DeclareMathOperator{\coker}{coker}
\newcommand{\CAlg}{\mathrm{CAlg}}
\newcommand{\CRing}{\mathrm{CRing}}
\newcommand{\Ani}{\mathrm{Ani}}
\newcommand{\Ring}{\mathrm{Ring}}
\newcommand{\Fun}{\mathrm{Fun}}
\newcommand{\Mack}{\mathrm{Mack}}
\newcommand{\NAlg}{\mathrm{NAlg}}
\newcommand{\Sp}{\mathrm{Sp}}
\newcommand{\Hom}{\mathrm{Hom}}
\newcommand{\Map}{\mathrm{Map}}
\newcommand{\Poly}{\mathrm{Poly}}
\newcommand{\fib}{\mathrm{fib}}
\newcommand{\cofib}{\mathrm{cofib}}
\newcommand{\DF}{\mathrm{DF}}
\newcommand{\gr}{\mathrm{gr}}
\newcommand{\Fil}{\mathrm{Fil}}
\newcommand{\id}{\mathrm{id}}
\newcommand{\can}{\mathrm{can}}
\newcommand{\comp}{\mathrm{comp}}
\newcommand{\colim}{\mathop{\mathrm{colim}}}
\newcommand{\Fin}{\mathrm{Fin}}
\newcommand{\Spc}{\mathrm{Spc}}
\newcommand{\Span}{\mathrm{Span}}
\DeclareMathOperator{\sotimes}{\square}
\DeclareMathOperator{\Tor}{\mathrm{Tor}}
\newcommand{\Cat}{\mathrm{Cat}}
\newcommand{\QSyn}{\mathrm{QSyn}}
\newcommand{\FinGpd}{\mathrm{FinGpd}}
\newcommand{\SH}{\mathrm{SH}}
\newcommand{\HH}{\mathrm{HH}}
\newcommand{\HR}{\mathrm{HR}}
\newcommand{\HCR}{\mathrm{HCR}}
\newcommand{\HPR}{\mathrm{HPR}}
\newcommand{\THH}{\mathrm{THH}}
\newcommand{\TPR}{\mathrm{TPR}}
\newcommand{\THR}{\mathrm{THR}}
\newcommand{\TC}{\mathrm{TC}}
\newcommand{\TP}{\mathrm{TP}}
\newcommand{\TCR}{\mathrm{TCR}}
\newcommand{\KR}{\mathrm{KR}}
\newcommand{\tr}{\mathrm{tr}}
\newcommand{\res}{\mathrm{res}}
\newcommand{\Nm}{\mathrm{Nm}}
\DeclareFontFamily{U}{cbgreek}{}
\DeclareFontShape{U}{cbgreek}{m}{n}{
        <-6>    grmn0500
        <6-7>   grmn0600
        <7-8>   grmn0700
        <8-9>   grmn0800
        <9-10>  grmn0900
        <10-12> grmn1000
        <12-17> grmn1200
        <17->   grmn1728
      }{}
\DeclareFontShape{U}{cbgreek}{bx}{n}{
        <-6>    grxn0500
        <6-7>   grxn0600
        <7-8>   grxn0700
        <8-9>   grxn0800
        <9-10>  grxn0900
        <10-12> grxn1000
        <12-17> grxn1200
        <17->   grxn1728
      }{}
\DeclareRobustCommand{\Qoppa}{%
  \text{\usefont{U}{cbgreek}{\normalorbold}{n}\symbol{21}}%
}
\newcommand{\normalorbold}{%
  \ifnum\pdf@strcmp{\math@version}{bold}=\z@ bx\else m\fi
}
\begin{document}

\author{Doosung Park}
\address{Bergische Universit{\"a}t Wuppertal,
Fakult{\"a}t f\"ur Mathematik und Naturwissenschaften
\\
Gau{\ss}strasse 20, 42119 Wuppertal, Germany}
\email{dpark@uni-wuppertal.de}

\title{Syntomic cohomology and real topological cyclic homology}

\subjclass{Primary 19D55; Secondary 11E70, 16E40, 55P91}
\keywords{syntomic cohomology, real topological cyclic homology, motivic filtrations}
\begin{abstract}
We define the motivic filtrations on real topological Hochschild homology and its companions.
In particular,
we prove that real topological cyclic homology admits a natural complete filtration whose graded pieces are equivariant suspensions of syntomic cohomology.
As an application,
assuming a real refinement of the Dundas--Goodwillie--McCarthy theorem,
we compute the the $RO(\Z/2)$-graded homotopy groups of $\mathrm{KR}(\mathbb{F}_p[x]/x^e;\mathbb{Z}_p)$,
and we compute the equivariant slices of $\Sigma^2 \tau_{\geq 1}\mathrm{KR}(\mathbb{Z}/p^n;\mathbb{Z}_p)$.
\end{abstract}
\maketitle

\section{Introduction}

The cyclotomic trace $K\to \TC$ of B\"okstedt--Hsiang--Madsen \cite{BHM93} approximates algebraic $K$-theory via topological cyclic homology.
Together with the theorem of Dundas--Goodwillie--McCarthy \cite{DGM13},
trace methods become effective tools for computing algebraic K-theory,
see e.g.\ computations of Hesselholt--Madsen on algebraic $K$-theory of truncated polynomial algebras \cite{MR1471886} and local fields \cite{MR1998478}.

\medskip

Recent advances in syntomic cohomology and prisms offer further computational tools for algebraic $K$-theory.
Bhatt, Morrow, and Scholze \cite{BMS19} provided the motivic filtration on $\TC$ whose graded pieces are syntomic cohomology $\Z_p(i)$.
Bhatt and Scholze \cite{BS22} further studied the structure of $\Z_p(i)$ by introducing prisms.
As an application,
they proved in \cite[Theorem 1.18]{BS22} that $\Z_p(i)$ is quasisyntomic locally concentrated in degree $0$.
Liu and Wang \cite{MR4493328} employed this technique to revisit topological cyclic homology of local fields.
Mathew \cite[Theorem 10.4]{M21} computed $\Z_p(i)(k[x]/x^2)$ for a perfect $\F_p$-algebra $k$ and $p>2$, and Sulyma \cite{MR4632370} generalized this to $\Z_p(i)(k[x]/x^e)$ for all $p$ and integer $e\geq 1$.
Antieau, Krause, and Nikolaus \cite{AKN} established the computation of $\pi_i \TC(\Z/p^n;\Z_p)$ and hence $K_i(\Z/p^n)$ using the techniques of syntomic cohomology.

\medskip

In contrast, only few computations for hermitian and real $K$-theories are known especially if $2$ is not invertible.
However,
as written in \cite[\S 1.1]{QS22},
a work in progress of Harpaz--Nikolaus--Shah would construct the real cyclotomic trace
\[
\KR(\cC,\Qoppa)\to \TCR(\cC,\Qoppa)
\]
from real $K$-theory to real topological cyclic homology for every Poincar\'e $\infty$-category $(\cC,\Qoppa)$.
We refer to \cite{MR4514986} and \cite{2009.07224} for hermitian and real $K$-theories of Poincar\'e categories,
and see \cite[\S B.2]{2009.07224} for the comparison with Schlichting's hermitian K-theory of exact categories \cite{zbMATH05690542} when $2$ is invertible.
Furthermore,
Harpaz, Nikolaus, and Shah claimed that the real cyclotomic trace
satisfies a real refinement of the theorem of Dundas--Goodwillie--McCarthy.
This would imply that real trace methods are effective tools for computing hermitian and real $K$-theories.

\medskip

To utilize real trace methods,
the computations of real topological Hochschild homology are also desired.
We review the recent progress on this as follows.
Dotto, Moi, Patchkoria, and Reeh \cite[Theorem 5.15]{DMPR21} computed $\THR(\F_p)$,
which refines B\"okstedt's computation of $\THH(\F_p)$.
Dotto, Moi, and Patchkoria \cite[Theorem D, Proposition 4.2, Remark 5.14]{DMP} computed $\THR(k)$ and $\TCR(k)$ for every perfect field $k$ (as observed in \cite[Proposition 6.26]{QS22}, a similar argument can be used to compute $\THR(k)$ for every perfect $\F_p$-algebra $k$).
Quigley and Shah \cite{QS22} introduced the notion of real cyclotomic spectra and show that $\THR$ is a real cyclotomic spectrum,
which refines the results of Nikolaus--Scholze \cite[Theorem 1.4, Corollary 1.5]{NS}.
Using this method,
$\TCR(k)$ can be directly computed from the real cyclotomic spectrum $\THR(k)$ for every perfect algebra $k$, see \cite[Theorem F]{QS22}.

\medskip

The purpose of this article is to relate syntomic cohomology and real topological cyclic homology,
which aims to take advantage of the computational aspects of syntomic cohomology to real and hermitian $K$-theories.
More precisely,
we provide the motivic filtrations on $\THR$ and its companions $\TCR^-$, $\TPR$, and $\TCR$.
Our main result is as follows.

\begin{thm}[Theorem \ref{TCR.1}]
\label{intro.1}
Let $A$ be a quasisyntomic ring with trivial involution.
Then there exist natural complete exhaustive multiplicative filtrations on $\THR(A;\Z_p)$, $\TCR^-(A;\Z_p)$, $\TPR(A;\Z_p)$, and $\TCR(A;\Z_p)$ whose $n$th graded pieces are
\begin{gather*}
\gr^n \THR(A;\Z_p)
\simeq
\Sigma^{n+n\sigma}
\iota \cN^n  \widehat{\prism}_A\{n\},
\\
\gr^n \TCR^-(A;\Z_p)
\simeq
\Sigma^{n+n\sigma}
\iota \cN^{\geq n} \widehat{\prism}_A\{n\},
\\
\gr^n \TPR(A;\Z_p)
\simeq
\Sigma^{n+n\sigma}
\iota \widehat{\prism}_A\{n\},
\\
\gr^n \TCR(A;\Z_p)
\simeq
\Sigma^{n+n\sigma}
\iota \Z_p(n)(A).
\end{gather*}
\end{thm}

Here,
$\iota \colon \Sp\to \Sp^{\Z/2}$ is the functor of $\infty$-categories that is left adjoint to the fixed point functor $(-)^{\Z/2}\colon \Sp^{\Z/2}\to \Sp$ from the $\infty$-category of $\Z/2$-spectra to the $\infty$-category of spectra.
The notation $\Sigma^{m+n\sigma}$ means the equivariant suspension,
where $\Sigma^1$ corresponds to the circle $S^1$ with the trivial involution, and $\Sigma^\sigma$ corresponds to the circle $S^\sigma$ with the involution given by $(x,y)\in S^1\mapsto (x,-y)$.
We refer to \cite[\S A]{HP23} for a review of equivariant homotopy theory.
We also refer to \cite[Theorem 1.12]{BMS19} for the Nygaard completed prismatic cohomology $\widehat{\prism}_A\{n\}$, the Nygaard filtration $\cN^{\geq n}$, and the Breuil--Kisin twist $\{n\}$.

\medskip

By \cite[Theorem 7.1(1)]{BMS19},
$\THH(S;\Z_p)$ is even for every quasiregular semiperfectoid ring $S$, i.e., $\pi_{*}\THH(S;\Z_p)$ is concentrated in even degrees.
This is the key property of $\THH$ that implies many results in \cite[\S 7]{BMS19}.
For $\THR$,
we need the slice filtration
\[
\cdots \to P_n \to  P_{n-1}\to \cdots
\]
in $\Sp^{\Z/2}$,
which is an equivariant generalization of the Postnikov filtration in $\Sp$.
In this article,
we use the regular slice filtration introduced by Ullman \cite{Ull13},
which is a slight adjustment of the original slice filtration of Hill--Hopkins--Ravenel \cite{HHR}.
The adjusted version is further considered in \cite{HHR21}.
The $n$th slice is $P_n^n:=\cofib(P_{n+1}\to P_n)$.
The key property of $\THR$ is that $\THR(S;\Z_p)$ is \emph{strongly even} for quasiregular semiperfectoid ring $S$ in the following sense,
which we need for the proof of Theorem \ref{TCR.1}.

\begin{thm}[Theorem \ref{THR.6}]
Let $S$ be a quasiregular semiperfectoid ring with trivial involution.
Then $\THR(S;\Z_p)$ is strongly even.
In other words,
we have natural equivalences of $\Z/2$-spectra
\begin{gather*}
P_{2n}^{2n}\THR(S;\Z_p)
\simeq
\Sigma^{n+n\sigma} \rH \ul{\pi_{2n}\THH(S;\Z_p)},
\\
P_{2n+1}^{2n+1} \THR(S;\Z_p)\simeq 0
\end{gather*}
for every integer $n$.
\end{thm}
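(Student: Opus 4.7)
The plan is to bootstrap very-evenness of $\THR(S;\Z_p)$ from evenness of $\THH(S;\Z_p)$, using a slice-theoretic criterion combined with an analysis of the geometric $\Z/2$-fixed points. The idea is to show both that all odd slices vanish and that the Mackey functor of each even slice is forced to equal the claimed one.

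First, I would formulate (or extract from \cite{HHR21, Ull13}) the criterion that a $\Z/2$-spectrum $X$ is very even as soon as its underlying spectrum and its geometric $\Z/2$-fixed points $\Phi^{\Z/2} X$ are both even and the natural comparison $\pi_{n+n\sigma}^{\Z/2} X \to \pi_{2n}$ of the underlying spectrum is an isomorphism for every $n$. Such a criterion automatically yields $P_{2n+1}^{2n+1} X\simeq 0$ and identifies each even slice with an equivariant suspension of an Eilenberg--MacLane spectrum on a Mackey functor whose underlying value is $\pi_{2n}$ of the underlying spectrum, matching the stated form.

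Second, I would verify the two evenness inputs for $X=\THR(S;\Z_p)$. Underlying evenness is immediate: the underlying spectrum is $\THH(S;\Z_p)$, which is even by \cite[Theorem 7.1(1)]{BMS19}. For the geometric fixed points, I would use the real cyclotomic structure on $\THR$ of Quigley--Shah \cite{QS22} to express $\Phi^{\Z/2}\THR(S;\Z_p)$ as a Tate-type construction built from $\THH(S;\Z_p)$, and then propagate evenness through this construction using multiplicativity and collapse of the corresponding spectral sequence, by analogy with the argument that $\TP(S;\Z_p)$ is even in \cite{BMS19}. The Mackey functor $\ul{\pi_{2n}\THH(S;\Z_p)}$ is then read off from the isomorphism statement in the criterion.

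The main obstacle will be establishing evenness of $\Phi^{\Z/2}\THR(S;\Z_p)$. Unlike the ordinary Tate setting, new differentials can in principle appear in the genuine $\Z/2$-equivariant Tate-type spectral sequence computing $\Phi^{\Z/2}\THR(S;\Z_p)$, and ruling them out for every quasiregular semiperfectoid $S$ likely requires quasisyntomic descent lifted to $\Sp^{\Z/2}$, together with a base case supplied by the explicit computations for perfect $\F_p$-algebras in \cite{DMP} and \cite{DMPR21} where evenness of the geometric fixed points is directly visible.
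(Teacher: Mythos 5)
There is a genuine gap. Your criterion for very-evenness is correct as stated, but it has three hypotheses, and you only ever address two of them. Evenness of $i^*X$ and of $\Phi^{\Z/2}X$ yields only the vanishing of the odd slices $P_{2n+1}^{2n+1}X$; it says nothing about the even slices being \emph{constant} Mackey functors, which is the actual content of ``very even'' and of the first displayed equivalence in the theorem. The third hypothesis --- that the restriction maps $\pi_{n+n\sigma}X\to\pi_{2n}(i^*X)$ are isomorphisms --- is exactly where the difficulty lies, and your plan neither verifies it nor sketches a mechanism that could. It does not follow from the evenness inputs: for instance $\Sigma^\infty(\Z/2)_+\wedge\rH\Z$ has even underlying spectrum and vanishing geometric fixed points, yet $\ul{\pi}_0$ is the induced Mackey functor, which is not constant. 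So ``the Mackey functor is then read off from the isomorphism statement in the criterion'' restates the theorem rather than proving it. A secondary problem is the proposed fallback of ``quasisyntomic descent lifted to $\Sp^{\Z/2}$ with a base case of perfect $\F_p$-algebras'': quasiregular semiperfectoid rings are themselves the local objects of the quasisyntomic site, so one cannot descend from perfect algebras to them; descent runs in the other direction (from qrsp rings to general quasisyntomic rings, as in Theorem \ref{TCR.1}).

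The paper's proof gets constancy by exhibiting an explicit complete filtration with manifestly constant graded pieces. Choose a perfectoid ring $R$ with a map $R\to S$ and set $\Fil_n\THR(S;\Z_p):=\THR(S;\Z_p)\wedge_{\THR(R;\Z_p)}P_{2n}\THR(R;\Z_p)$. By the computation of $\THR$ of perfectoid rings \cite[Theorems 5.16, 5.17]{HP}, the graded pieces are $\Sigma^{n+n\sigma}\HR(S/R;\Z_p)$; by the completeness of the real Hochschild--Kostant--Rosenberg filtration (Theorems \ref{geometric.6} and \ref{descent.3}) together with \cite[Lemma 5.14(1)]{BMS19}, each $\HR(S/R;\Z_p)$ is in turn built from pieces of the form $\Sigma^{m+m\sigma}\rH\ul{M_m}$, which are very even by inspection. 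Very-evenness then propagates through fiber sequences and complete filtrations via Lemmas \ref{THR.4} and \ref{THR.5} (which require the epimorphism statements on $\rho_{2n}$ to control $\lim^1$). If you want to salvage your approach, you would need to supply an independent proof that the restriction maps are isomorphisms, and it is hard to see how to do that without essentially reconstructing this relative-to-a-perfectoid filtration.
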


To show this,
we need to show that the natural filtration on the real Hochschild homology in the author's joint work with Hornbostel \cite[Theorem 4.32]{HP} is indeed complete with a certain assumption,
see Theorem \ref{geometric.6}.
Another crucial ingredient is the computation of $\THR$ of perfectoid rings in \cite[Theorem 5.16]{HP}.

\medskip

We also need the following for the proof Theorem \ref{TCR.1}.

\begin{thm}[Part of Theorem \ref{TPR.6}]
The presheaves
\[
\THR(-;\Z_p),
\;
\TCR^-(-;\Z_p),
\;
\TPR(-;\Z_p)
\]
on the opposite category of quasisyntomic rings with trivial involution are quasisyntomic sheaves.
\end{thm}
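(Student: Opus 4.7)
Proof plan. Following the strategy of \cite[Theorem 7.1]{BMS19}, I would first reduce to checking descent on the full subcategory $\QRSPerfd\subset \QSyn$ of quasiregular semiperfectoid rings and then exploit the very-evenness established in Theorem \ref{THR.6}. Any quasisyntomic ring $A$ admits a quasisyntomic cover $A\to S^0$ by a quasiregular semiperfectoid ring whose \v{C}ech nerve $S^\bullet$ remains in $\QRSPerfd$, so it suffices to prove that the canonical map
\[
\THR(A;\Z_p)\to \lim_{\Delta}\THR(S^\bullet;\Z_p)
\]
is an equivalence in $\Sp^{\Z/2}$, and similarly for $\TCR^-$ and $\TPR$.

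For $\THR$, Theorem \ref{THR.6} endows each $\THR(S^n;\Z_p)$ with a complete slice filtration concentrated in even slices satisfying $P_{2k}^{2k}\THR(S^n;\Z_p)\simeq \Sigma^{k+k\sigma}\rH\ul{\pi_{2k}\THH(S^n;\Z_p)}$. I would proceed slice by slice: completeness of the filtration reduces the totalization question to descent for each Eilenberg-MacLane piece $\Sigma^{k+k\sigma}\rH\ul{\pi_{2k}\THH(-;\Z_p)}$, and hence to the vanishing of higher cosimplicial cohomology of the $C_2$-Mackey functor $\ul{\pi_{2k}\THH(S^\bullet;\Z_p)}$. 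Its underlying abelian group is $\pi_{2k}\THH(S;\Z_p)\cong \cN^k\widehat{\prism}_S\{k\}$, which is a discrete quasisyntomic sheaf on $\QRSPerfd^{\mathrm{op}}$ by \cite[Theorem 7.1]{BMS19}. Since the Mackey functor structure is determined by this underlying sheaf together with the involution inherited from the $\Z/2$-action on $\THR$, both levels descend, and the slice-wise totalization reassembles to $\THR(A;\Z_p)$.

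For $\TCR^-(-;\Z_p)$ and $\TPR(-;\Z_p)$, I would use that they are built from $\THR(-;\Z_p)$ by equivariant homotopy fixed points and Tate constructions with respect to the $C_2$-equivariant $S^1$-action underlying the real cyclotomic structure of \cite{QS22}. Both constructions are limits in $\Sp^{\Z/2}$ applied objectwise, and limits of sheaves of $\Z/2$-spectra are sheaves, so descent propagates from $\THR(-;\Z_p)$ to $\TCR^-(-;\Z_p)$ and $\TPR(-;\Z_p)$.

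The main obstacle is the slice-by-slice step, namely promoting the BMS2 quasisyntomic vanishing of $\cN^k\widehat{\prism}_{-}\{k\}$ from abelian groups to $C_2$-Mackey functors. Controlling the $C_2$-level of $\ul{\pi_{2k}\THH(S^\bullet;\Z_p)}$ along a quasisyntomic cover requires combining the scalar BMS2 vanishing with the explicit involution on Nygaard-filtered prismatic cohomology, and I expect this to follow by reducing to the perfectoid computation of $\THR$ in \cite[Theorem 5.16]{HP} together with a Mackey functor refinement of Nygaard descent.
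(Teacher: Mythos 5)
Your plan has two genuine gaps, and it also runs in the opposite direction from the paper's argument. First, the reduction ``it suffices to prove $\THR(A;\Z_p)\to\lim_\Delta\THR(S^\bullet;\Z_p)$ is an equivalence for a chosen quasiregular semiperfectoid cover'' is not a valid way to establish the sheaf condition: a presheaf is a quasisyntomic sheaf only if it satisfies descent for \emph{all} quasisyntomic covers, and verifying one cover per object gives neither this nor the unfolding statement. Worse, your slice-by-slice mechanism (complete even slice filtration, Eilenberg--MacLane graded pieces, discrete descent of $\cN^k\widehat{\prism}$) only has traction when every term in sight is very even, i.e.\ for covers \emph{inside} $\QRSPerfd$; for a general quasisyntomic $A$ the spectrum $\THR(A;\Z_p)$ has no such slice description, so the comparison map you need to analyze has no starting point. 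This identification of $\THR(A;\Z_p)$ with the totalization over a quasiregular semiperfectoid cover is precisely what the sheaf theorem is for, so your argument is close to circular. (Relatedly, be careful invoking Theorem \ref{THR.6}: in the paper it is proved \emph{after} Theorem \ref{TPR.6}, via Proposition \ref{THR.7} and Theorem \ref{descent.3}, the latter of which cites Theorem \ref{TPR.6}; one can arrange the logic to avoid circularity since the quasiregular semiperfectoid case of Theorem \ref{descent.3} only needs Theorem \ref{geometric.6}, but you would have to say so.) The paper avoids all of this by proving fpqc descent \emph{directly} for arbitrary faithfully flat covers, integrally and then $p$-adically: a double induction over Postnikov truncations reduces to descent for Mackey-functor-valued homotopy groups of $\HR(-/R)$, which is handled by base change (Lemmas \ref{descent.2} and \ref{descent.1}), Lazard's theorem, flatness of $\rH\ul{A}\to\rH\ul{B}$, and the cosimplicial homotopy equivalence of the \v{C}ech nerve of $B\to B\otimes_A B$; the case of $\THR$ is then reduced to $\HR(-/\Z)$ via the weak Postnikov tower $\THR(-)\wedge_{\THR(\Z)}\tau_{\leq i}\THR(\Z)$.

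Second, your last step is wrong for $\TPR$: the Tate construction is \emph{not} a limit applied objectwise. By Definition \ref{remind.1}, $X^{tS^\sigma}=\cofib(\Sigma^\sigma X_{hS^\sigma}\to X^{hS^\sigma})$, and the homotopy orbits $X_{hS^\sigma}=j_!X$ form a colimit, which does not automatically commute with the totalization over a cover. This is exactly why the paper spends a large part of the proof of Theorem \ref{TPR.6} showing that $\THR(-;\Z_p)_{hS^\sigma}$ (and $\HR(-/R;\Z_p)_{hS^\sigma}$) is itself a sheaf, using the right $t$-exactness of $(-)_{hS^\sigma}$ and the weak Postnikov tower interchange of Lemma \ref{TPR.2}; only then does the norm cofiber sequence $\Sigma^\sigma(-)_{hS^\sigma}\to(-)^{hS^\sigma}\to(-)^{tS^\sigma}$ deliver the sheaf property for $\TPR(-;\Z_p)$. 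Your proposal would need to supply an analogue of this step; as written, the claim ``limits of sheaves are sheaves'' covers $\TCR^-$ but not $\TPR$.
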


Mathew \cite[Theorem 10.4]{M21} for the cases of $p>2$ and $e=2$ and Sulyma \cite[Theorem 1.1]{MR4632370} for the general case computed $\Z_p(t)(k[x]/x^e)$,
where $k$ is a perfect field of characteristic $p$.
Combining this with Theorem \ref{intro.1},
we obtain the following computation.

\begin{thm}[Theorem \ref{TCR.6}]
Let $k$ be a perfect field of characteristic $2$.
Then we have an isomorphism of abelian groups
\[
\pi_{s,w}^{\Z/2}\TCR(k[x]/x^e;\Z_2) 
\cong
\bigoplus_{i=0}^s E_{i,s-i,w}^2,
\]
where
\begin{align*}
&E_{*,*,*}^2
\cong 
(C[\tau^2]
\oplus
\rho D[\tau^2,\rho]
\oplus
\tau D'[\tau^2,\rho]
\oplus
\tfrac{\gamma}{\tau} C[\tfrac{1}{\tau^2}]
\oplus
\tfrac{\gamma}{\tau^2}D [\tfrac{1}{\tau^2},\tfrac{1}{\rho}] \oplus \tfrac{\gamma}{\tau \rho} D'[\tfrac{1}{\tau^2},\tfrac{1}{\rho}])\{y\}
\\
&\oplus 
\Z_2[\tau^2] \oplus \rho \F_2[\tau^2,\rho]
\oplus
\tfrac{\gamma}{\tau} \Z_2[\tfrac{1}{\tau^2}] \oplus \tfrac{\gamma}{\tau^2}\F_2 [\tfrac{1}{\tau^2},\tfrac{1}{\rho}]
\\
\oplus
&
\bigoplus_{t=1}^\infty
(
A_t[\tau^2]
\oplus
\rho B_t[\tau^2,\rho]
\oplus
\tau B_t'[\tau^2,\rho]
\oplus
\tfrac{\gamma}{\tau} A_t[\tfrac{1}{\tau^2}]
\oplus
\tfrac{\gamma}{\tau^2}B_t [\tfrac{1}{\tau^2},\tfrac{1}{\rho}] \oplus \tfrac{\gamma}{\tau \rho} B_t'[\tfrac{1}{\tau^2},\tfrac{1}{\rho}]
)
\{x_t\},
\end{align*}
$\lvert \tau\rvert =(0,0,-1)$,
$\lvert \rho \rvert = (-1,0,-1)$,
$\lvert \gamma \rvert = (0,0,1)$,
$\lvert x_t\rvert = (t-1,t,t)$,
$\lvert y \rvert = (-1,0,0)$,
$A_t:=\W_{et}(k)/V_e\W_{t}(k)$, $\W_{m}$ denotes the ring of $m$-truncated big Witt vectors for an integer $m$.
$V_e$ is the $e$th Verschiebung operator,
$B_t:=\coker(2\colon A_t\to A_t)$,
$B_t':=\ker(2\colon A_t\to A_t)$,
$C:=\coker(1-F)$ with the Frobenius $F$ on $W(k)$,
$D:=\coker(2\colon C\to C)$,
and $D':=\ker(2\colon C\to C)$.
\end{thm}

We also compute the equivariant slices of $\Sigma^2 \tau_{\geq 1}\mathrm{KR}(\mathbb{Z}/p^n;\mathbb{Z}_p)$ using the computation of Antieau--Krause--Nikolaus \cite{AKN},
see Example \ref{TCR.3}.

\medskip

The claimed real refinement of the Dundas--Goodwillie--McCarthy theorem would yield an equivalence between $\mathrm{KR}(A;\Z_p)$ and $\tau_{\geq 0}\TCR(A;\Z_p)$ for several classes of commutative rings $A$ with trivial involution,
where $(-;\Z_p)$ means the $p$-completion.
In this case,
the aforementioned computations valid for $\mathrm{KR}(A;\Z_p)$ after taking $\tau_{\geq 0}$.

\medskip

Recently,
Angelini-Knoll, Kong, and Quigley uploaded their preprint \cite{AKQ} on real syntomic cohomology in arXiv.
See \cite[Theorem 6.7]{AKQ} for the comparison between their filtration and ours.

\medskip

\subsection*{Convention} The involution on a commutative ring is regarded as the trivial one if we do not specify the involution.

\medskip

\subsection*{Acknowledgement} We thank Jens Hornbostel for many helpful conversations on this topic and comments on the draft.
We also thank Gabriel Angelini-Knoll and the referee for encouraging us to carry out the computations of $\pi_{*,*}^{\Z/2}\TCR(A;\Z_p)$ for quasisyntomic rings $A$.
Finally,
we are grateful to the referee for the helpful comments leading to improvements of this text.
This research was conducted in the framework of the DFG-funded research training group GRK 2240: \emph{Algebro-Geometric Methods in Algebra,
Arithmetic and Topology}.

\section{Reminders on equivariant homotopy theory}

In this section,
we recall equivariant stable homotopy theory.
We refer to \cite{HHR21} for a comprehensive book and \cite[\S 9]{BH21} for the $\infty$-categorical construction.
See  \cite[\S A]{HP23} for a review.

Let $\FinGpd$ denote the $2$-category of finite groupoids.
For $X\in \FinGpd$,
let $\Fin_X$ denote the $1$-category of finite coverings of $X$ whose morphisms are also finite coverings between finite coverings.
For an $\infty$-category $\cC$,
let $\Span(\cC)$ denote the $\infty$-category of spans \cite[\S 5]{Bar17},
whose morphism from $X$ to $Z$ has the form
\[
X\xleftarrow{f} Y\xrightarrow{p}Z.
\]
In this case,
$f$ is called a \emph{backward morphism},
and $p$ is called a \emph{forward morphism}.
According to \cite[\S 9.2]{BH21},
we have the functor
\[
\SH \colon \Span(\FinGpd)\to \Cat_\infty,
\;
(X\xleftarrow{f}Y\xrightarrow{p}Z)\mapsto p_\otimes f^*
\]
such that $\SH (B G)$ is the stable $\infty$-category $\Sp^G$ of $G$-spectra for every finite group $G$.
We write $\wedge$ for the monoidal product and $\Sphere$ for the monoidal unit in $\Sp^G$.
For a morphism $f$ in $\FinGpd$,
$f^*$ admits a right adjoint $f_*$.
If $f$ is a finite covering,
then $f^*$ admits a left adjoint $f_!$.

If $i\colon *\to B (\Z/2)$ is the unique finite covering,
then $i$ induces the adjunction pair
\[
i_! : \Sp \rightleftarrows \Sp^{\Z/2} : i^*.
\]
Our convention for such diagrams is that $i_!$ is left adjoint to $i^*$.
The restriction functor $i^*$ is the forgetful functor,
and the functor $i_\otimes$ is the norm functor $N_e^{\Z/2}\colon \Sp\to \Sp^{\Z/2}$.

If $p\colon B(\Z/2)\to *$ is the unique morphism,
then $p$ induces the adjunction pair
\[
\iota : \Sp\rightleftarrows \Sp^{\Z/2} : (-)^{\Z/2},
\]
where $(-)^{\Z/2}:=p^*$ is the fixed point functor.
The functor $p_\otimes$ is the geometric fixed point functor $\Phi^{\Z/2}\colon \Sp^{\Z/2}\to \Sp$.
Since $pi=\id$,
we have $\id \simeq p_\otimes i_\otimes =\Phi^{\Z/2}N_e^{\Z/2}$.

The pair of functors $(i^*,(-)^{\Z/2})$ is conservative,
and the pair of functors $(i^*,\Phi^{\Z/2})$ is conservative too.

We have the infinite suspension functor $\Sigma^\infty$ from the $\infty$-category of pointed $\Z/2$-spaces to $\Sp^{\Z/2}$.
For a $\Z/2$-spectrum $X$ and an integer $n$,
we set $\Sigma^\sigma X:=\Sigma^\infty S^\sigma\wedge X$.
The functor $\Sigma^\sigma\colon \Sp^{\Z/2}\to \Sp^{\Z/2}$ is an equivalence of $\infty$-categories,
and let $\Sigma^{-\sigma}$ denote its quasi-inverse functor.
For integers $m$ and $n$,
we have the equivariant suspension functor $\Sigma^{m+n\sigma}\colon \Sp^{\Z/2}\to \Sp^{\Z/2}$ built  from $\Sigma^m$, $\Sigma^\sigma$, and $\Sigma^{-\sigma}$.

A \emph{Mackey functor $M$ for $\Z/2$} is a diagram of abelian groups
\[
\begin{tikzcd}
M(\Z/2) \ar[r,shift left=0.75ex,"\tr"]\ar[r,shift right=0.75ex,"\res"',leftarrow]\ar[loop left, out=190, in=170,looseness=7,"w"]&
M(\pt)
\end{tikzcd}
\]
satisfying
\[
w\circ w=\id,
\;
\res\circ\tr=1+w,
\;
\tr\circ w=\tr,
\;
w\circ \res=\res.
\]
Let $\Mack_{\Z/2}$ denote the category of Mackey functors for $\Z/2$.

Let $\ul{\pi}_n(X)$ be the equivariant $n$th homotopy group of $X$,
which is a Mackey functor.
We set $\pi_n^{\Z/2}(X):=\ul{\pi}_n(X)(\pt)$.
There is a natural isomorphism
\[
\pi_n^{\Z/2}(X)\cong \Hom_{\Sp^{\Z/2}}(\Sigma^n \Sphere,X).
\]

For a Mackey functor $M$, the \emph{equivariant Eilenberg--MacLane spectrum $\rH M$} is a unique $\Z/2$-spectrum satisfying
\[
\ul{\pi}_i(\rH M)
\cong
\left\{
\begin{array}{ll}
M & \text{if $i=0$},
\\
0 & \text{otherwise}.
\end{array}
\right.
\]
For Mackey functors $M$ and $M'$, we have the \emph{box product} given by
\[
M\sotimes M':=\ul{\pi}_0(\rH M\wedge \rH M').
\]
This gives a monoidal structure on $\Mack_{\Z/2}$.
A \emph{Green functor} is a monoid object of $\Mack_{\Z/2}$.

For $X\in \FinGpd$, a \emph{normed $X$-spectrum} is a section of $\SH$ over $\Span(\Fin_X)$ that is cocartesian over backward morphisms, see \cite[Definition 9.14]{BH21}.
Let $\NAlg(\SH(X))$ denote the $\infty$-category of normed $X$-spectra.
As observed in \cite[\S A.1]{HP23},
the forgetful functor $\NAlg(\SH(X))\to \SH(X)$ is conservative.
If $f\colon X\to S$ is a finite covering in $\FinGpd$,
then the functor $f^*\colon \NAlg(\SH(S))\to \NAlg(\SH(X))$ admits a left adjoint $f_\otimes$ by \cite[Proposition A.1.11]{HP23} and a right adjoint $f_*$ by \cite[Proposition A.1.12]{HP23}.
For abbreviation,
we set
\[
\NAlg^{\Z/2}:=\NAlg(\SH(B (\Z/2))).
\]
A \emph{normed $\Z/2$-spectrum} is an object of this $\infty$-category.
On the other hand,
we have $\CAlg(\Sp)\simeq \NAlg(\SH(\pt))$ by \cite[Example 9.15]{BH21},
where $\CAlg(\Sp)$ is the $\infty$-category of $\E_\infty$-rings.

\section{Reminders on \texorpdfstring{$\Z/2$-$\infty$-}{Z/2-infinity }categories and real topological cyclic homology}

Let us review the theory of $\Z/2$-$\infty$-categories following \cite{MR4587313}, \cite{2109.11954}, and \cite{QS21}.

Let $\cO_{\Z/2}^{op}$ denote the opposite category of the orbit category of $\Z/2$,
which can be described as the diagram
\[
\begin{tikzcd}
\Z/2 \ar[r,leftarrow,"\res"]\ar[loop left,"w"]&
\pt
\end{tikzcd}
\]
with the relations $w\circ \res=\res$ and $w\circ w=\id$.
We have the functor $\cO_{\Z/2}^{op}\to \Fin_{B(\Z/2)}$ sending $\pt$ to $\pt$, $\Z/2$ to $B(\Z/2)$, and $\res$ to the finite covering $\pt\to B(\Z/2)$.

To avoid a set-theoretical issue,
we fix two Grothendieck universes $\mathbb{U}\in \mathbb{V}$.
A \emph{$\Z/2$-space} is a functor,
\[
X\colon \cO_{\Z/2}^{op}\to \Spc,
\]
where $\Spc$ denotes the $\infty$-category of $\mathbb{U}$-small spaces.
A \emph{$\Z/2$-$\infty$-category} is a functor
\[
\cC\colon \cO_{\Z/2}^{op}\to \Cat_\infty,
\]
where $\Cat_\infty$ denotes the $\infty$-category of $\mathbb{V}$-small $\infty$-categories.
For example, the $\infty$-category $\Sp$ of $\mathbb{U}$-small spectra is not $\mathbb{U}$-small but $\mathbb{V}$-small.

We can view a $\Z/2$-space as a $\Z/2$-$\infty$-category.
For $\Z/2$-$\infty$-categories $\cC$ and $\cD$,
a \emph{$\Z/2$-functor $\cC\to \cD$} is a natural transformation.
Let $\Fun_{\Z/2}(\cC,\cD)$ be the $\infty$-category of $\Z/2$-functors $\cC\to \cD$.

\begin{exm}
Let $X$ be a topological $\Z/2$-space, i.e., topological space with $\Z/2$-action.
Then we have the natural functor $X\colon \cO_{\Z/2}^{op}\to \Spc$ such that $X(\Z/2)$ is $X$, $X(\pt)$ is the $\Z/2$-fixed point $X^{\Z/2}$,
$X(\res)$ is the inclusion $X^{\Z/2}\to X$,
and $X(w)$ is the $\Z/2$-action on $X$.
We can also recover a topological $\Z/2$-space from a functor $\cO_{\Z/2}^{op}\to \Spc$ using Elmendorf's equivalence \cite[Theorem 8.8.1]{HHR21}.

Let $*$ denote the $\Z/2$-space with trivial $\Z/2$-action.
\end{exm}

\begin{exm}
By restricting the functor $\mathrm{SH}$ to $\cO_{\Z/2}^{op}$,
we obtain the $\Z/2$-$\infty$-category
\[
\ul{\Sp}^{\Z/2}
\colon
\cO_{\Z/2}^{op}
\to
\Cat_\infty.
\]
If we regard $\ul{\Sp}^{\Z/2}$ as a cocartesian fibration over $\cO_{\Z/2}^{op}$,
then the fiber at $\Z/2\in \cO_{\Z/2}^{op}$ is $\Sp$,
and the fiber at $\pt\in \cO_{\Z/2}^{op}$ is $\Sp^{\Z/2}$.
Furthermore, $\ul{\Sp}^{\Z/2}(\res)$ corresponds to $i^*\colon \Sp^{\Z/2}\to \Sp$.

Adapt \cite[Example 8.7]{BH21} to the non-motivic setting to similarly obtain the $\Z/2$-$\infty$-category
\[
\ul{\NAlg}^{\Z/2}
\colon
\cO_{\Z/2}^{op}
\to
\Cat_\infty.
\]
If we regard $\ul{\NAlg}^{\Z/2}$ as a cocartesian fibration over $\cO_{\Z/2}^{op}$,
then the fiber at $\Z/2\in \cO_{\Z/2}^{op}$ is $\CAlg(\Sp)$,
and the fiber at $\pt\in \cO_{\Z/2}^{op}$ is $\NAlg^{\Z/2}$.
Furthermore, $\ul{\NAlg}^{\Z/2}(\res)$ corresponds to $i^*\colon \NAlg^{\Z/2}\to \CAlg(\Sp)$.
\end{exm}

\begin{prop}
\label{G-cat.2}
Let $\cC$ be a $\Z/2$-$\infty$-category,
which we regard as a cocartesian fibration $p\colon \cC\to \cO_{Z/2}^{op}$.
Then $\Fun_{\Z/2}(*,\cC)$ is equivalent to the fiber of $\cC$ at $\pt$.
\end{prop}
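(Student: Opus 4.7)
The plan is to interpret $\Fun_{\Z/2}(*,\cC)$ as the limit of $\cC$ over $\cO_{\Z/2}^{op}$ via straightening-unstraightening, and then exploit the fact that $\pt$ is the initial object of $\cO_{\Z/2}^{op}$.

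First, I would view the terminal $\Z/2$-$\infty$-category $*$ as the constant functor $\cO_{\Z/2}^{op} \to \Cat_\infty$ with value the point; its unstraightening is the identity cocartesian fibration $\cO_{\Z/2}^{op} \to \cO_{\Z/2}^{op}$. Under straightening-unstraightening, a $\Z/2$-functor $* \to \cC$, i.e.\ a natural transformation of functors $\cO_{\Z/2}^{op} \to \Cat_\infty$, corresponds to a cocartesian section of $p$, and the $\infty$-category of such cocartesian sections computes the limit of $\cC$ in $\Cat_\infty$:
\[
\Fun_{\Z/2}(*,\cC) \simeq \Fun^{\mathrm{cocart}}_{\cO_{\Z/2}^{op}}(\cO_{\Z/2}^{op},\cC) \simeq \lim_{\cO_{\Z/2}^{op}} \cC.
\]

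Second, I would observe that $\pt$ is initial in $\cO_{\Z/2}^{op}$: from the explicit diagram, the only endomorphism of $\pt$ is the identity, and the relation $w\circ\res = \res$ identifies the two a priori distinct morphisms $\pt \to \Z/2$. Equivalently, $\pt = (\Z/2)/(\Z/2)$ is terminal in the orbit category $\cO_{\Z/2}$, hence initial in its opposite.

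Third, the inclusion of an initial object into any $\infty$-category preserves limits. Indeed, for each $X \in \cO_{\Z/2}^{op}$, the comma $\infty$-category $\{\pt\} \times_{\cO_{\Z/2}^{op}} (\cO_{\Z/2}^{op})_{/X}$ is equivalent to the mapping space $\Map_{\cO_{\Z/2}^{op}}(\pt,X)$, which is contractible by initiality of $\pt$. It follows that $\lim_{\cO_{\Z/2}^{op}} \cC \simeq \cC(\pt)$, and combining with the first step yields the desired equivalence. I do not foresee a substantive obstacle: the argument is a formal consequence of straightening-unstraightening and the standard preservation of limits by inclusions of initial objects; the only piece requiring minor care is the identification of $\Fun_{\Z/2}(*,\cC)$ with cocartesian sections of $p$.
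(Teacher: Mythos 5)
Your proposal is correct and follows essentially the same route as the paper's proof: identify $\Fun_{\Z/2}(*,\cC)$ with the cocartesian sections of $p$, invoke the fact (HTT, Corollary 3.3.3.2) that these compute $\lim \cC$, and then use that $\pt$ is initial in $\cO_{\Z/2}^{op}$ to evaluate the limit at $\pt$. The only difference is that you spell out the cofinality argument for the initial object, which the paper leaves implicit.
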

\begin{proof}
This is a special case of \cite[Lemma 2.12]{1608.03657}. We include the proof of this special case as follows.
If we regard the $\Z/2$-space $*$ as a cartesian fibration over $\cO_{\Z/2}$,
then this is the identity functor $\cO_{\Z/2}\to \cO_{\Z/2}$.
Hence we can identify $\Fun_{\Z/2}(*,\cC)$ with the $\infty$-category of cartesian sections of $p^{op}\colon \cC^{op}\to \cO_{\Z/2}$.
Together with \cite[Corollary 3.3.3.2]{HTT},
we have an equivalence of $\infty$-categories $\lim(\cC \colon \cO_{\Z/2}^{op}\to \Cat_\infty)\simeq \Fun_{\Z/2}(*,\cC)$.
To conclude,
observe that $\pt$ is an initial object of $\cO_{\Z/2}^{op}$.
\end{proof}

\begin{exm}
The $\Z/2$-space $S^\sigma$ has the monoid structure induced by the multiplication $S^1\times S^1\to S^1$.
Let $B S^\sigma$ be the classifying $\Z/2$-space of $S^\sigma$,
which is given by the colimit of the bar construction
\[
\colim
\big(
\cdots
\,
\substack{\rightarrow\\[-1em] \rightarrow \\[-1em] \rightarrow}
\,
S^\sigma \times S^\sigma
\,
\substack{\rightarrow\\[-1em] \rightarrow}
\,
S^\sigma
\big)
\]
This is written as $B_{C_2}^tS^1$ in \cite{QS22}.
\end{exm}

\begin{exm}
For an integer $m\geq 1$,
Let $C_m^\sigma$ be the $\Z/2$-subspace of $S^\sigma$ consisting of $e^{2\pi n i/m}$ for $0\leq n\leq m-1$.
The monoid structure on $S^\sigma$ induces a monoid structure on $C_m^\sigma$.
Let $B C_m^\sigma$ be the classifying space of $C_m^\sigma$.
\end{exm}

\begin{df}
The \emph{$\infty$-category of $\Z/2$-spectra with $S^\sigma$-action} is 
\[
(\Sp^{\Z/2})^{B S^\sigma}
:=
\Fun_{\Z/2}(B S^\sigma,\ul{\Sp}^{\Z/2}).
\]
This is written as $\mathbf{Sp}^{h_{C_2}S^1}$ in \cite{QS22}.
An \emph{$S^\sigma$-equivariant map of $\Z/2$-spectra with $S^\sigma$-action} is a map in this $\infty$-category.

The \emph{$\infty$-category of normed $\Z/2$-spectra with $S^\sigma$-action} is
\[
(\NAlg^{\Z/2})^{B S^\sigma}
:=
\Fun_{\Z/2}(BS^\sigma,\ul{\NAlg}^{\Z/2}).
\]
An \emph{$S^\sigma$-equivariant map of normed $\Z/2$-spectra with $S^\sigma$-action} is a map in this $\infty$-category.
\end{df}

\begin{prop}
\label{G-cat.1}
Let $\phi\colon I\to J$ be a map of $\Z/2$-spaces.
Then the restriction functor
\[
\phi^*\colon \Fun_{\Z/2}(J,\ul{\Sp}^{\Z/2})
\to
\Fun_{\Z/2}(I,\ul{\Sp}^{\Z/2})
\]
admits a left adjoint $\phi_!$ and a right adjoint $\phi_*$,
and the restriction functor
\[
\phi^*\colon \Fun_{\Z/2}(J,\ul{\NAlg}^{\Z/2})
\to
\Fun_{\Z/2}(I,\ul{\NAlg}^{\Z/2})
\]
admits a left adjoint $\phi_\otimes$ and a right adjoint $\phi_*$.
\end{prop}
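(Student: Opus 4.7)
The plan is to derive the existence of $\phi_!$, $\phi_\otimes$, and $\phi_*$ from the parametrized adjoint functor theorem of \cite{1809.05892,QS21}, using that both $\ul{\Sp}^{\Z/2}$ and $\ul{\NAlg}^{\Z/2}$ are $\Z/2$-bicomplete $\Z/2$-presentable $\Z/2$-$\infty$-categories. The general machinery then gives, for any $\Z/2$-bicomplete target $\cC$ and any map of $\Z/2$-spaces $\phi\colon I\to J$, both a left and a right adjoint to restriction, computed by parametrized Kan extension.

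Concretely, $\Z/2$-bicompleteness reduces to fiberwise bicompleteness together with the existence of adjoints to the restriction functors along finite coverings in $\FinGpd$. In the spectrum case, $\Sp$ and $\Sp^{\Z/2}$ are presentable and the restriction $i^*\colon \Sp^{\Z/2}\to \Sp$ admits adjoints $i_!$ and $i_*$ by the discussion recalled in Section 2. In the normed case, $\CAlg$ and $\NAlg^{\Z/2}$ are presentable (see \cite[\S 9]{BH21}), and $i^*\colon \NAlg^{\Z/2}\to \CAlg$ admits left adjoint $i_\otimes$ and right adjoint $i_*$ by \cite[Propositions A.1.11 and A.1.12]{HP23}. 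Writing an arbitrary $\Z/2$-space as a colimit of the orbits $*$ and $\Z/2$ in $\Fun(\cO_{\Z/2}^{op},\Spc)$, and using that $\Fun_{\Z/2}(-,\cC)$ sends colimits of $\Z/2$-spaces to limits of $\infty$-categories, one can construct $\phi^*$ and its adjoints pointwise on orbits and then assemble them.

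The main subtle point is the normed left adjoint $\phi_\otimes$. It is not a linear Kan extension and cannot be computed via the forgetful functor $\NAlg(\SH(X))\to \SH(X)$: at each forward step of a finite covering in $\FinGpd$ one must insert the norm $i_\otimes$ rather than the coproduct $i_!$. This is handled by working throughout inside the normed formalism of \cite[\S 9]{BH21}, where $\ul{\NAlg}^{\Z/2}$ is by construction a section of $\SH$ over $\Span(\Fin_{B(\Z/2)})$ rather than only over $\cO_{\Z/2}^{op}$, so that the norms are part of the ambient data and are automatically used by parametrized Kan extension. Equivalently, one can build $\phi_\otimes$ case by case on the orbit maps $\res$ and $w$ using $i_\otimes$, $i^*$, and the conjugation, and then check the required coherences using that norms compose.
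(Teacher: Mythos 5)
Your argument follows essentially the same route as the paper's proof: reduce to the parametrized adjoint-functor criteria (the paper invokes \cite[Observation 4.1]{QS21} together with \cite[Theorem B]{2109.11954} and its dual), verify the fiberwise adjoints and the adjoints along the finite covering $i$ (citing the same \cite[Propositions A.1.11, A.1.12]{HP23} in the normed case), and resolve the normed left adjoint $\phi_\otimes$ by observing that the norms are already packaged into $\SH\colon \Span(\FinGpd)\to\Cat_\infty$. The only point you should make explicit is that the ``required coherences'' at the end are precisely the Beck--Chevalley base-change conditions (condition (3) of the cited criterion) --- these are what allow the orbitwise adjoints to assemble over a limit of $\infty$-categories --- and the paper verifies them via \cite[Proposition A.1.9]{HP23} for $\ul{\Sp}^{\Z/2}$ and, for $\ul{\NAlg}^{\Z/2}$, by reducing along the conservative forgetful functor to the base change for $(-)_\otimes$ and $(-)^*$ encoded in the span construction of $\SH$.
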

\begin{proof}
By \cite[Observation 4.2]{QS21},
it suffices to check the conditions in \cite[Theorem B]{2109.11954} and its dual statement.

The conditions (1) and (2) for $\ul{\Sp}^{\Z/2}$ are satisfied since for every finite covering $f\colon X\to S$ in $\FinGpd$,
$f^*\colon \SH(S)\to \SH(X)$ admits a left adjoint $f_!$ and a right adjoint $f_*$.
The condition (3) for $\ul{\Sp}^{\Z/2}$ is satisfied by \cite[Proposition A.1.9]{HP23}.

The conditions (1) and (2) for $\ul{\NAlg}^{\Z/2}$ are satisfied since for every finite covering $f\colon X\to S$ in $\FinGpd$, $f^*\colon \NAlg(\SH(S))\to \NAlg(\SH(X))$ admits a left adjoint $f_\otimes$ and a right adjoint $f_*$ by \cite[Propositions A.1.11, A.1.12]{HP23}.
The forgetful functor $\NAlg(\SH(X))\to \SH(X)$ is conservative for every $X\in \FinGpd$ as observed in \cite[\S A.1.1]{HP23}.
To check the condition (3)  for $\ul{\NAlg}^{\Z/2}$,
using the diagrams in  \cite[Propositions A.1.11, A.1.12]{HP23},
it suffices to check $(-)_\otimes \circ (-)^* \xrightarrow{\simeq} (-)^* \circ (-)_\otimes$ for $\SH$.
This is encoded in the construction of $\SH\colon \Span(\FinGpd)\to  \Cat_\infty$.\footnote{This functor $\SH$ is denoted by $\SH^\otimes$ in \cite{HP23}.}
\end{proof}

Let $j\colon B S^\sigma\to *$ be the trivial map of $\Z/2$-spaces.
Then by Propositions \ref{G-cat.2} and \ref{G-cat.1}, we have the adjunctions
\[
\begin{tikzcd}
(\Sp^{\Z/2})^{B S^\sigma}
\ar[r,shift left=1.5ex,"j_!"]
\ar[r,"j^*" description,leftarrow]
\ar[r,"j_*"',shift right=1.5ex]
&
\Sp^{\Z/2}.
\end{tikzcd}
\]
The \emph{$S^\sigma$-homotopy orbit functor} is $(-)_{hS^\sigma}:=j_!$.
The \emph{$S^\sigma$-homotopy fixed point functor} is $(-)^{hS^\sigma}:=j_*$.
On the other hand,
$j^*$ imposes the trivial $S^\sigma$-action.

{For an integer $m\geq 1$, let $w_m\colon BS^\sigma\to B(S^\sigma/C_m^\sigma)\xrightarrow{\simeq} BS^\sigma$ be the composite map of $\Z/2$-spaces,
where the first map is induced by the quotient map $S^\sigma\to S^\sigma/C_m^\sigma$.
Then by Propositions \ref{G-cat.2} and \ref{G-cat.1},
we have the adjunctions
\[
\begin{tikzcd}
(\Sp^{\Z/2})^{B S^\sigma}
\ar[r,shift left=1.5ex,"w_{m!}"]
\ar[r,"w_m^*" description,leftarrow]
\ar[r,"w_{m*}"',shift right=1.5ex]
&
(\Sp^{\Z/2})^{B S^\sigma}
\end{tikzcd}
\]
The \emph{$C_m^\sigma$-homotopy orbit functor} is $(-)_{hC_m^\sigma}:=w_{m!}$.
The \emph{$C_m^\sigma$-homotopy fixed point functor} is $(-)^{hC_m^\sigma}:=w_{m*}$.

The map of $\Z/2$-spaces $q\colon *\to B S^\sigma$ to the base point induces the forgetful functors
\begin{gather*}
q^*\colon (\Sp^{\Z/2})^{BS^\sigma}\to \Sp^{\Z/2},
\\
q^*\colon (\NAlg^{\Z/2})^{BS^\sigma}\to \NAlg^{\Z/2}.
\end{gather*}

\begin{prop}
\label{G-cat.3}
The above forgetful functors $q^*$ preserve colimits and are conservative.
\end{prop}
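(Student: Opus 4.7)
The plan is to treat colimit preservation and conservativity separately, and within each to handle the spectra-valued case first, then the normed case. Preservation of colimits is immediate from Proposition \ref{G-cat.1}: applying that proposition to the map of $\Z/2$-spaces $q\colon *\to BS^\sigma$ produces a right adjoint $q_*$ to $q^*$ in both the $\ul{\Sp}^{\Z/2}$ and $\ul{\NAlg}^{\Z/2}$ settings. Being a left adjoint, each $q^*$ then preserves every colimit.

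For conservativity of the spectra-valued $q^*$, I would observe that $q^*$ is the parametrized version of ``forgetting the $S^\sigma$-action,'' namely evaluation of a $\Z/2$-functor $F\colon BS^\sigma\to \ul{\Sp}^{\Z/2}$ at the basepoint (using Proposition \ref{G-cat.2} to identify $\Fun_{\Z/2}(*,\ul{\Sp}^{\Z/2})$ with $\Sp^{\Z/2}$). Because $BS^\sigma$ is a connected $\Z/2$-$\infty$-groupoid --- all of its morphisms are invertible --- any $\Z/2$-natural transformation $\alpha$ between such functors has the property that its components at different objects are conjugate by equivalences in $\ul{\Sp}^{\Z/2}$, so once the component at the basepoint is an equivalence, $\alpha$ itself is an equivalence. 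This is the parametrized analogue of the familiar fact that the underlying-spectrum functor $\Sp^{BG}\to \Sp$ is conservative for any group $G$.

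For conservativity of the normed $q^*$, I would use the commutative square
\[
\begin{tikzcd}
(\NAlg^{\Z/2})^{BS^\sigma} \ar[r,"q^*"] \ar[d,"U"'] & \NAlg^{\Z/2} \ar[d,"U"]\\
(\Sp^{\Z/2})^{BS^\sigma} \ar[r,"q^*"'] & \Sp^{\Z/2},
\end{tikzcd}
\]
where the $U$'s are the evident forgetful functors. The right-hand $U$ is conservative by the remark in Section 2 that $\NAlg(\SH(X))\to \SH(X)$ is conservative, and the left-hand $U$ is conservative for the same reason applied fiberwise over $\cO_{\Z/2}^{op}$. Combined with conservativity of the lower $q^*$ established in the previous paragraph, this forces the upper $q^*$ to be conservative as well.

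The main subtlety will be making the ``pointwise equivalence'' step in the spectra case rigorous in the parametrized $\infty$-categorical setting. I expect this is most cleanly executed by straightening the cocartesian fibration presenting $\Fun_{\Z/2}(BS^\sigma,\ul{\Sp}^{\Z/2})$, observing that equivalences of $\Z/2$-functors are detected fiberwise over $\cO_{\Z/2}^{op}$ and, within each fiber, pointwise over $BS^\sigma$; the connectedness of each fiber of $BS^\sigma$ then reduces the check to the single object hit by $q$.
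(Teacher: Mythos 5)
Your proof is correct, but the two halves relate differently to the paper's argument. For colimit preservation the paper does not invoke adjoints at all: it identifies $q^*$ with an evaluation functor and quotes \cite[Corollary 5.1.2.3]{HTT} to say that colimits in a (parametrized) functor category are computed pointwise. Your route --- $q^*$ has a right adjoint $q_*$ by Proposition \ref{G-cat.1}, hence preserves colimits --- is a genuinely different and arguably cleaner argument, and it transfers to the normed case for free. For conservativity you end up at essentially the paper's argument: the paper picks one vertex $v_1$ in the fiber $BS^1$ over $\Z/2$ and one vertex $v_2$ in the fiber $B(\Z/2)$ over $\pt$, notes that the two evaluations form a conservative family because each fiber is connected, and then observes that $ev_1$ factors through $ev_2=q^*$. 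Your last paragraph (detection fiberwise over $\cO_{\Z/2}^{op}$, then pointwise within each connected fiber) is exactly this. One wording in your second paragraph is not literally correct and worth fixing: the unstraightening of $BS^\sigma$ over $\cO_{\Z/2}^{op}$ is \emph{not} a groupoid and is not ``connected with all morphisms invertible'' --- the cocartesian edges covering $\res$ are not invertible, and across the two fibers the components of a transformation are related not by conjugation by equivalences but by applying the (non-conservative) functor $i^*$. The argument still goes through because that relation runs in the right direction: the component at the vertex over $\Z/2$ hit by $q$ is $i^*$ of the component at the vertex over $\pt$, so an equivalence at the latter forces one at the former; but you should state this step rather than appeal to invertibility. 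Finally, your reduction of the normed case to the spectrum case via the commutative square of forgetful functors is a fine alternative to the paper's ``the proofs are similar''; only the conservativity of the left-hand vertical $U$ and of the lower $q^*$ is actually used there.
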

\begin{proof}
We focus on $\Sp^{\Z/2}$ since the proofs are similar.
If we regard $BS^\sigma$ as a cocartesian fibration over $\cO_{\Z/2}^{op}$,
then $BS^\sigma$ have exactly two connected components since $BS^1$ and $B(\Z/2)$ are connected.
We also regard $\ul{\Sp}^{\Z/2}$ as a cocartesian fibration over $\cO_{\Z/2}^{op}$.
By \cite[Corollary 5.1.2.3]{HTT},
the evaluation functors
\begin{gather*}
ev_1\colon 
(\Sp^{\Z/2})^{BS^\sigma}
\to
\Fun_{\cO_{\Z/2}^{op}}({\{v_1\},\ul{\Sp}^{\Z/2}})
\simeq
\Sp,
\\
ev_2\colon 
(\Sp^{\Z/2})^{BS^\sigma}
\to
\Fun_{\cO_{\Z/2}^{op}}({\{v_2\},\ul{\Sp}^{\Z/2}})
\simeq
\Sp^{\Z/2}
\end{gather*}
corresponding to two vertices $v_1$ and $v_2$ in the two connected components of $BS^\sigma$
preserve colimits,
and they form a conservative family of functors.
To conclude, observe that $ev_2\simeq q^*$ and $ev_1$ factors through $ev_2$.
\end{proof}

\begin{df}
\label{remind.1}
Let $X$ be a $\Z/2$-spectrum with $S^\sigma$-action.
The \emph{Tate construction of $X$} is
\[
X^{tS^\sigma}
:=
\cofib(
\Sigma^\sigma X_{hS^\sigma}\xrightarrow{\Nm} X^{hS^\sigma}),
\]
where $\Sigma^\sigma X_{hS^\sigma}\xrightarrow{\Nm}X^{hS^\sigma}$ is the norm map,
see \cite[Example 5.58]{QS21} for the details.
According to the proof of \cite[Lemma 4.6]{QS22},
taking $\Phi^{\Z/2}$ to the \emph{norm cofiber sequence}
\[
\Sigma^\sigma X_{hS^\sigma}\xrightarrow{\Nm} X^{hS^\sigma}\to X^{tS^\sigma}
\]
yields the non-equivariant norm cofiber sequence for $C_2$:
\[
(\Phi^{\Z/2} X)_{hC_2}
\to
(\Phi^{\Z/2} X)^{hC_2}
\to
(\Phi^{\Z/2} X)^{tC_2}.
\]

We similarly have the norm cofiber sequence
\[
X_{hC_m^\sigma}\xrightarrow{\Nm} X^{hC_m^\sigma}\to X^{tC_m^\sigma}
\]
for every integer $m\geq 1$,
see \cite[Observation 1.9]{QS21}.
\end{df}

\begin{prop}
\label{remind.2}
Let $X$ be a $\Z/2$-spectrum.
If we impose the trivial $S^\sigma$-action on $X$,
then we have equivalences of $\Z/2$-spectra
\begin{gather*}
X_{hS^\sigma}
\simeq
\Sigma^\infty (B S^\sigma)_+ \wedge X,
\\
X^{hS^\sigma}
\simeq
F(\Sigma^\infty (B S^\sigma)_+, X),
\end{gather*}
where $F(-,-)$ denotes the internal hom in $\Sp^{\Z/2}$.
\end{prop}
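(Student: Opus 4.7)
By Proposition~\ref{G-cat.2} applied to $\ul{\Sp}^{\Z/2}$, a $\Z/2$-spectrum is a section of $\ul{\Sp}^{\Z/2}$ over $*$; pulling back along $j\colon B S^\sigma\to *$ imposes the trivial $S^\sigma$-action, as recorded in the text before the proposition. Thus $X$ with trivial $S^\sigma$-action is $j^*X$, and by the definitions $(-)_{hS^\sigma}=j_!$ and $(-)^{hS^\sigma}=j_*$, the claim reduces to identifying $j_!j^*X$ and $j_*j^*X$ with the stated expressions.

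\textbf{Main computation.} The functor $j^*$ is symmetric monoidal, and the adjunction $j_!\dashv j^*$ is a presentably symmetric monoidal adjunction, a feature of the parametrized framework of \cite{1809.05892,2109.11954,QS21}. As a formal consequence, one has the projection formula
\[
j_!(j^*X\wedge A)\simeq X\wedge j_!(A)
\qquad\text{for every }A\in (\Sp^{\Z/2})^{B S^\sigma}.
\]
Taking $A=\unit$, the monoidal unit, gives $j_!j^*X\simeq X\wedge j_!(\unit)$, so it suffices to identify $j_!(\unit)\simeq \Sigma^\infty(B S^\sigma)_+$. By the $j_!\dashv j^*$ adjunction, $j_!(\unit)$ corepresents the functor sending $Y\in \Sp^{\Z/2}$ to the space of $\Z/2$-equivariant maps from the $\Z/2$-space $B S^\sigma$ into the infinite loop $\Z/2$-space underlying $Y$, which is exactly the corepresenting property of $\Sigma^\infty(B S^\sigma)_+$ in $\Sp^{\Z/2}$. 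This yields the first equivalence.

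\textbf{Fixed points and main obstacle.} The second equivalence follows by chaining adjunctions and applying the first:
\begin{align*}
\Map(Y,j_*j^*X)
&\simeq \Map(j^*Y,j^*X)
\simeq \Map(j_!j^*Y,X)\\
&\simeq \Map(Y\wedge \Sigma^\infty(B S^\sigma)_+,X)
\simeq \Map(Y,F(\Sigma^\infty(B S^\sigma)_+,X)),
\end{align*}
and Yoneda yields $j_*j^*X\simeq F(\Sigma^\infty(B S^\sigma)_+,X)$. The principal technical point is the clean justification of the projection formula along the non-covering map $j$ in the parametrized setting — equivalently, the identification $j_!(\unit)\simeq \Sigma^\infty(B S^\sigma)_+$ — since the functor $\SH$ of \cite{BH21} is built from spans of finite coverings, and one must appeal to the extension of the formalism to arbitrary $\Z/2$-space-indexed Kan extensions developed in \cite{1809.05892,2109.11954,QS21}.
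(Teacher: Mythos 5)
Your proof is correct and follows essentially the same route as the paper: the paper likewise deduces the first equivalence from the projection formula for $j_!$ (citing \cite[Lemma 5.44]{QS21}, made applicable to $\Z/2$-spectra via \cite[Example 5.18]{QS21}) and obtains the second by adjunction. Your extra step of isolating $j_!(\unit)\simeq \Sigma^\infty(BS^\sigma)_+$ and the adjunction chain for $j_*j^*X$ are just explicit spellings-out of what the paper leaves implicit.
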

\begin{proof}
The first one is a consequence of the projection formula \cite[Lemma 5.45]{QS21}.
To use this for $\Z/2$-spectra,
we also need \cite[Example 5.19]{QS21}.
We obtain the second one by adjunction.
\end{proof}

Next,
we recall the definition of real topological Hochschild homology following \cite[\S 5]{QS22}.

\begin{df}
Let $A$ be a normed $\Z/2$-spectrum.
The \emph{real topological Hochschild homology of $A$} is
\[
\THR(A)
:=
q_\otimes A
\in
(\NAlg^{\Z/2})^{BS^\sigma},
\]
where $q\colon *\to BS^\sigma$ is the inclusion functor to the base point,
$q^*\colon (\NAlg^{\Z/2})^{B S^\sigma}\to \NAlg^{\Z/2}$ is the restriction functor,
and $q_\otimes$ is the left adjoint of $q^*$ that exists by Proposition \ref{G-cat.1}.

\begin{prop}
Let $A$ be a normed $\Z/2$-spectrum.
After forgetting the $S^\sigma$-action,
$\THR(A)$ is the $\Z/2$-tensor of $A$ with the $\Z/2$-space $S^\sigma$.
\end{prop}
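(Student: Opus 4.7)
The plan is to identify $q^*\THR(A) = q^*q_\otimes A$ with $S^\sigma \otimes A$ via base change. The key input is the pullback square of $\Z/2$-spaces
\[
\begin{tikzcd}
S^\sigma \ar[r, "b"] \ar[d, "a"'] & * \ar[d, "q"] \\
* \ar[r, "q"'] & BS^\sigma,
\end{tikzcd}
\]
which expresses the identification $\Omega_* BS^\sigma \simeq S^\sigma$ of $\Z/2$-grouplike monoids (noting that $S^\sigma$ is grouplike since it is modeled by the topological group $S^1$ equipped with complex conjugation). The first step is to establish that the Beck-Chevalley transformation
\[
a_\otimes b^* \longrightarrow q^*q_\otimes
\]
attached to this square is an equivalence of functors on $\ul{\NAlg}^{\Z/2}$.

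Granted Beck-Chevalley, we obtain $q^*\THR(A) \simeq a_\otimes b^* A$. Here $b\colon S^\sigma\to *$ is the projection, so $b^*A$ is the constant $S^\sigma$-indexed diagram with value $A$ in $\ul{\NAlg}^{\Z/2}$. By Proposition \ref{G-cat.1}, the functor $a_\otimes$ is the parametrized left adjoint to restriction along $a\colon S^\sigma\to *$, which computes the $\Z/2$-colimit of a $\Z/2$-diagram. Applied to the constant diagram $b^*A$, it therefore yields the $\Z/2$-tensor $S^\sigma\otimes A$ in $\NAlg^{\Z/2}$ by the standard description of tensors as colimits of constant diagrams. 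This would complete the identification.

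The main obstacle is the Beck-Chevalley step, since $q\colon *\to BS^\sigma$ is not a map of finite $\Z/2$-groupoids, and so is not covered directly by the Beck-Chevalley condition encoded in $\SH\colon \Span(\FinGpd)\to \Cat_\infty$. My approach would be to present $BS^\sigma$ as the geometric realization of its simplicial bar construction on $S^\sigma$, so that the square above is exhibited as the colimit of simpler pullback squares at each simplicial level involving only products of orbit-type $\Z/2$-spaces. At each level, the relevant Beck-Chevalley is inherited from that for finite coverings in $\FinGpd$, which is built into the construction of $\ul{\NAlg}^{\Z/2}$ (as used in the proof of Proposition \ref{G-cat.1}, appealing to \cite{HP23}). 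Propagating Beck-Chevalley through the realization is then legitimate because $q^*$ preserves colimits by Proposition \ref{G-cat.3} and $q_\otimes$ is a left adjoint, so both sides of the proposed equivalence commute with the relevant geometric realization.
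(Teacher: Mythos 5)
Your overall strategy coincides with the paper's proof: the same cartesian square of $\Z/2$-spaces with the two projections $S^\sigma\to *$, the same reduction to a base-change (Beck--Chevalley) equivalence identifying $q^*q_\otimes A$ with the left Kan extension along $S^\sigma\to *$ of the constant diagram, and the same reading of that left Kan extension as the $\Z/2$-tensor $S^\sigma\otimes A$ in $\NAlg^{\Z/2}$. The only divergence is how you justify the base change. The paper simply cites \cite[Lemma 4.3]{QS21}, which provides base change for parametrized left Kan extensions along an arbitrary pullback square of $\Z/2$-spaces, and observes that its hypotheses for $\ul{\NAlg}^{\Z/2}$ were already checked in the proof of Proposition \ref{G-cat.1}. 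Your hand-rolled bar-construction argument is in the right spirit but is imprecise at one point: the simplicial levels of the bar resolution of $*\to BS^\sigma$ are the powers $(S^\sigma)^{\times n}$, and $S^\sigma$ is not a finite covering of an orbit, so the levelwise squares do \emph{not} involve ``only products of orbit-type $\Z/2$-spaces'' and their Beck--Chevalley property is not directly inherited from the span structure on $\FinGpd$. To push your argument through you would have to further resolve $S^\sigma$ itself by $\Z/2$-orbits (a $\Z/2$-CW decomposition) and then justify commuting parametrized left Kan extensions past these colimits of indexing $\Z/2$-spaces; that two-stage reduction is exactly what \cite[Lemma 4.3]{QS21}, via the conditions of \cite[Theorem B]{2109.11954}, packages, so it is both cleaner and safer to invoke it directly as the paper does.
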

\begin{proof}
Consider the cartesian square
\[
\begin{tikzcd}
S^\sigma\ar[r,"r"]\ar[d,"r"']&
*\ar[d,"q"]&
\\
*\ar[r,"q"]&
B S^\sigma.
\end{tikzcd}
\]
What we mean by the statement is $q^*q_\otimes A\simeq r_\otimes r^*A$,
where
\[
r^*\colon \NAlg^{\Z/2}\to \Fun_{\Z/2}(S^\sigma,\ul{\NAlg}^{\Z/2})
\]
is the restriction functor,
and $r_\otimes$ is the left adjoint of $r^*$.
This is a consequence of \cite[Lemma 4.4]{QS21},
which is applicable to $\ul{\NAlg}^{\Z/2}$ by the proof of Proposition \ref{G-cat.1}.
\end{proof}

Let $A$ be a normed $\Z/2$-spectrum.
If we forget the $S^\sigma$-action on $\THR(A)$,
then we have an equivalence of normed $\Z/2$-spectra
\[
\THR(A)
\simeq
A\otimes_{ N_e^{\Z/2}i^* A} A
\]
by \cite[Remark 5.3]{QS22}.
This is the definition that is employed in \cite{HP23} and \cite{HP}.
The inclusion $*\to S^\sigma$ to the base point and the collapsing map $S^\sigma\to *$ induce $S^\sigma$-equivariant maps of normed $\Z/2$-spectra
\[
A\to \THR(A)\to A.
\]
According to \cite[Construction 5.5]{QS22},
we have the \emph{real $p$-cyclotomic Frobenius}
\[
\varphi_p\colon \THR(A)\to \THR(A)^{t C_p^\sigma}.
\]

We also have the companions of $\THH(A)$:
\begin{gather*}
\TCR^-(A)
:=
\THR(A)^{hS^\sigma}\in \Sp^{\Z/2},
\\
\TPR(A)
:=
\THR(A)^{tS^\sigma}\in \Sp^{\Z/2}.
\end{gather*}
\end{df}

\begin{df}
If $R$ is a commutative ring,
then we set $\THR(R):=\THR(\rH \ul{R})$,
where $\ul{R}$ denotes the constant Tambara functor associated with $R$.
We will follow a similar convention for $\TCR^-$, $\TPR$, and their $p$-complete variants too.
\end{df}

\section{\texorpdfstring{$p$}{p}-completions}

In this section,
we review $p$-completions in $\Sp^{\Z/2}$.
We refer to \cite[\S 2]{HP} for a similar one in the case of equivariant derived $\infty$-categories.
We will apply $p$-completions to $\THR$ and its companions.

\begin{df}
\label{comp.1}
Let $X$ be a $\Z/2$-spectrum.
Following \cite[Definitions 7.1.1.1, 7.2.4.1, 7.3.1.1]{SAG},
we say that $X$ is
\begin{enumerate}
\item[(1)] $p$-nilpotent if
\(
X[1/p]
:=
\colim(X\xrightarrow{p} X\xrightarrow{p} \cdots)
\simeq 0,
\)
\item[(2)] $p$-local if $\Map_{\Sp^{\Z/2}}(Y,X)\simeq 0$ for every $p$-nilpotent $\Z/2$-spectrum $Y$,
\item[(3)] $p$-complete  if $\Map_{\Sp^{\Z/2}}(Y,X)\simeq 0$ for every $p$-local $\Z/2$-spectrum $Y$.
\end{enumerate}
We have similar definitions for spectra too.
Let $\Sp_{p-\comp}$ (resp.\ $\Sp_{p-\comp}^{\Z/2}$) be the full subcategory of $\Sp$ (resp.\ $\Sp^{\Z/2}$) spanned by the $p$-complete spectra (resp.\ $p$-complete $\Z/2$-spectra).
The inclusion functor $\Sp_{p-\comp}^{\Z/2}\to \Sp^{\Z/2}$ admits a left adjoint
\[
(-)_p^\wedge \colon \Sp^{\Z/2}\to \Sp_{p-\comp}^{\Z/2},
\]
which we call the \emph{$p$-completion}.
\end{df}

\begin{prop}
\label{comp.2}
Let $X$ be a $\Z/2$-spectrum.
\begin{enumerate}
\item[\textup{(1)}]
We have
$X_p^\wedge\simeq \cofib(\lim (\cdots \xrightarrow{p} X\xrightarrow{p} X)\to X)$.
\item[\textup{(2)}]
The fiber of the canonical map $X\to X_p^\wedge$ is $p$-local.
\item[\textup{(3)}] $X$ is $p$-local if and only if $X_p^\wedge \simeq 0$.
\item[\textup{(4)}] $X$ is $p$-local if and only if $i^*X$ and $X^{\Z/2}$ are $p$-local.
\item[\textup{(5)}] $X$ is $p$-local if and only if the multiplication $p\colon X\to X$ is an equivalence.
\item[\textup{(6)}] $X$ is $p$-complete if and only if $i^*X$ and $X^{\Z/2}$ are $p$-complete.
\end{enumerate}
\end{prop}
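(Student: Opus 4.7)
\emph{Plan.} I would establish the six statements in the order (5), (1), (2), (3), (4), (6), treating (5) as the fundamental characterization from which most of the rest follows formally.

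Starting with (5), for the forward direction let $Y := \cofib(p\colon X \to X)$. Inverting $p$ makes this cofiber trivial, so $Y$ is $p$-nilpotent. Mapping into $X$ produces a fiber sequence $\Map(Y, X) \to \Map(X, X) \xrightarrow{p\cdot} \Map(X, X)$, and $p$-locality of $X$ forces the left term to vanish. A morphism with zero fiber in a stable $\infty$-category is an equivalence, so multiplication by $p$ on $\Map(X, X)$ is invertible; centrality of $p \in \pi_0 \Sphere$ makes pre- and post-composition by $p$ agree, giving a genuine two-sided inverse to $p\colon X \to X$. Conversely, if $p$ is an equivalence on $X$, then for any $p$-nilpotent $Y$ one computes $\Map(Y, X) \simeq \lim(\cdots \xrightarrow{p^*} \Map(Y, X)) \simeq \Map(Y[1/p], X) = \Map(0, X) = 0$, the first step using that $p$ acts invertibly on $\Map(Y, X)$.

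For (1), set $T_p X := \lim(\cdots \xrightarrow{p} X \xrightarrow{p} X)$. The shift endomorphism of $T_p X$ is a two-sided inverse to multiplication by $p$ (since the transition maps in the defining diagram are themselves $p$), so by (5) the spectrum $T_p X$ is $p$-local. Let $C := \cofib(T_p X \to X)$. For any $p$-local $Y$, the restriction $\Map(Y, T_p X) = \lim \Map(Y, X) \to \Map(Y, X)$ is an equivalence because $p$ acts invertibly on $\Map(Y, X)$; hence $\Map(Y, C) = 0$, so $C$ is $p$-complete. The canonical map $X \to C$ then has $p$-local fiber and $p$-complete target, and the universal property of Bousfield localization identifies $C$ with $X_p^\wedge$. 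Statement (2) is then immediate from the identification of the fiber as $T_p X$, and (3) follows because $X$ is $p$-local iff $p$ is invertible on $X$ iff $T_p X \to X$ is an equivalence iff $X_p^\wedge = 0$.

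For (4), combine (5) with conservativity of the pair $(i^*, (-)^{\Z/2})$ recalled in Section~2: both functors are exact and send multiplication by $p$ to multiplication by $p$, so $p\colon X \to X$ is an equivalence iff its images under $i^*$ and $(-)^{\Z/2}$ are. For (6), note that $i^*$ (which admits both adjoints) and $(-)^{\Z/2}$ (a right adjoint, hence exact in the stable setting) preserve both the limit $T_p(-)$ and the cofiber $\cofib(T_p(-) \to (-))$, so they commute with the formula in (1), giving $i^*(X_p^\wedge) \simeq (i^*X)_p^\wedge$ and $(X_p^\wedge)^{\Z/2} \simeq (X^{\Z/2})_p^\wedge$. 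Since $X$ is $p$-complete exactly when $X \to X_p^\wedge$ is an equivalence (Bousfield localization), conservativity yields the claim. The principal technical subtlety is the passage in (5) from the vanishing of $\fib(p\cdot \colon \Map(X, X) \to \Map(X, X))$ to an actual two-sided inverse for $p\colon X \to X$, which requires both the stable-categorical equivalence criterion and the centrality of $p \in \pi_0 \Sphere$; the remainder parallels the non-equivariant treatment in \cite[\S 7]{SAG}.
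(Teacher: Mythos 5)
Your proposal is correct, and it is organized quite differently from the paper's proof, though both ultimately rest on the same standard localization formalism. The paper obtains (1)--(3) by citing \cite[Propositions 7.3.2.1, 7.3.1.4]{SAG}, deduces (4) and (6) from (1) and (3) using that $i^*$ and $(-)^{\Z/2}$ are jointly conservative and commute with the limit--cofiber formula, and only then proves the hard direction of (5) ($p$-local $\Rightarrow$ $p$ invertible) by passing through (4) to reduce to $i^*X$ and $X^{\Z/2}$ in $\Sp$, where the $p$-nilpotent objects $\Sigma^n\Sphere/p$ detect invertibility of $p$. You instead take (5) as the starting point and prove its hard direction intrinsically in $\Sp^{\Z/2}$, mapping out of the $p$-nilpotent object $X/p$ and using centrality of $p\in\pi_0\Sphere$ to upgrade invertibility of $p$ on $\map(X,X)$ to a two-sided inverse of $p\colon X\to X$; you then reconstruct the content of the SAG citations for (1)--(3) via the $p$-local limit $T_pX$ and the Bousfield-localization universal property, and derive (4) from (5) rather than from (1) and (3). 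Both arguments are sound. Your route is self-contained and makes clear that (1)--(5) hold in any presentable stable $\infty$-category, with the equivariant structure entering only in (4) and (6); the paper's route is shorter because it outsources (1)--(3), at the cost of a slightly roundabout proof of (5). One point worth making explicit in your write-up of (1): the identification of $C=\cofib(T_pX\to X)$ with $X_p^\wedge$ needs not just that $C$ is $p$-complete and $\fib(X\to C)$ is $p$-local, but the one-line consequence that $\Map(C,Z)\to\Map(X,Z)$ is an equivalence for every $p$-complete $Z$ (using that $T_pX$ and $\Sigma T_pX$ are $p$-local); similarly, the backward implication in your chain of equivalences in (3) uses that $T_pX$ is $p$-local, which you did establish earlier.
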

\begin{proof}
(1) is a consequence of \cite[Proposition 7.3.2.1]{SAG},
and (2) and (3) are consequences of \cite[Proposition 7.3.1.4]{SAG}. (4) and (6) are consequences of (1) and (3) since the pair of functors $i^*$ and $(-)^{\Z/2}$ is conservative and they preserve limits and cofibers.

(5) If $p\colon X\to X$ is an equivalence,
then $X$ is $p$-local by (1) and (3).
Conversely, assume that $X$ is $p$-local.
Then $i^*X$ and $X^{\Z/2}$ are $p$-local by (4).
Since $\Sigma^n\Sphere/p:=\Sigma^n\cofib(\Sphere\xrightarrow{p}\Sphere)\in \Sp$ is $p$-nilpotent for $n\in \Z$,
$i^*X\xrightarrow{p} i^*X$ and $X^{\Z/2}\xrightarrow{p} X^{\Z/2}$ are equivalences.
It follows that $X\xrightarrow{p} X$ is an equivalence.
\end{proof}

\begin{const}
\label{comp.7}
Let $\ul{\Sp}_{p-\comp}^{\Z/2}$ be the full $\Z/2$-subcategory of $\ul{\Sp}^{\Z/2}$ such that the fiber of $\ul{\Sp}_{p-\comp}^{\Z/2}$ at $\Z/2$ is $\Sp_{p-\comp}$ and at $\pt$ is $\Sp_{p-\comp}^{\Z/2}$.
The inclusion $\Z/2$-functor $\ul{\Sp}_{p-\comp}^{\Z/2}\to \ul{\Sp}^{\Z/2}$ admits a left adjoint
\[
(-)_p^\wedge
\colon
\ul{\Sp}^{\Z/2}
\to
\ul{\Sp}_{p-\comp}^{\Z/2},
\]
which is the pointwise $p$-completion.
We have the induced functor
\[
(-)_p^\wedge
\colon
(\Sp^{\Z/2})^{B S^\sigma}
\to
(\Sp_{p-\comp}^{\Z/2})^{B S^\sigma}
:=
\Fun_{\Z/2}(B S^\sigma,\ul{\Sp}_{p-\comp}^{\Z/2}).
\]
Hence for every $\Z/2$-spectrum $X$ with $S^\sigma$-action,
we can regard the completion $X_p^\wedge$ as a $\Z/2$-spectrum with $S^\sigma$-action.
\end{const}

\begin{prop}
\label{comp.3}
Let $X$ be a $p$-complete $\Z/2$-spectrum with $S^\sigma$-action.
Then $X^{hS^\sigma}$ is $p$-complete.
If we assume that $i^*X$ is bounded below,
then $X_{hS^\sigma}$ and $X^{tS^\sigma}$ are $p$-complete too.
\end{prop}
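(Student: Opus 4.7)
The plan is to test $p$-completeness via the conservative pair $(i^*, (-)^{\Z/2})$ of Proposition \ref{comp.2}(6), together with the norm cofiber sequence of Definition \ref{remind.1} to relate the three constructions. The easiest case is $X^{hS^\sigma}$: since $(-)^{hS^\sigma}=j_*$ is a right adjoint by Proposition \ref{G-cat.1} and the inclusion $\Sp_{p-\comp}^{\Z/2}\hookrightarrow \Sp^{\Z/2}$ is likewise a right adjoint to $(-)_p^\wedge$, both preserve limits. By Construction \ref{comp.7} together with Proposition \ref{comp.2}(6), $p$-completeness of $X$ forces the $\Z/2$-functor $X\colon BS^\sigma\to \ul{\Sp}^{\Z/2}$ to factor through $\ul{\Sp}_{p-\comp}^{\Z/2}$, so its limit $j_*X=X^{hS^\sigma}$ is $p$-complete.

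Assume now $i^*X$ is bounded below. From the norm cofiber sequence $\Sigma^\sigma X_{hS^\sigma}\to X^{hS^\sigma}\to X^{tS^\sigma}$, combined with the facts that $\Sigma^\sigma$ is an auto-equivalence preserving $p$-completeness and that cofibers between $p$-complete objects are $p$-complete, it suffices to show $X_{hS^\sigma}$ is $p$-complete. By Proposition \ref{comp.2}(6), I would check $p$-completeness of $i^*(X_{hS^\sigma})$ and $(X_{hS^\sigma})^{\Z/2}$ separately. For the underlying part, $i^*$ preserves colimits and $i^*BS^\sigma\simeq BS^1$, so $i^*(X_{hS^\sigma})\simeq (i^*X)_{hS^1}$. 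I would then invoke the homotopy-orbits spectral sequence $H_s(BS^1;\pi_t(i^*X))\Rightarrow \pi_{s+t}((i^*X)_{hS^1})$: the bounded-below hypothesis ensures only finitely many contributions per total degree, and since each $\pi_t(i^*X)$ is derived $p$-complete (as $i^*X$ is bounded-below $p$-complete), each $\pi_n((i^*X)_{hS^1})$ is a finite extension of derived $p$-complete abelian groups, hence derived $p$-complete; together with bounded-below-ness this yields $p$-completeness.

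The main obstacle is the fixed-point part $(X_{hS^\sigma})^{\Z/2}$. My approach is via the isotropy-separation cofiber sequence
\[
(i^*X_{hS^\sigma})_{hC_2}\to (X_{hS^\sigma})^{\Z/2}\to \Phi^{\Z/2}(X_{hS^\sigma}).
\]
The first term is the $C_2$-homotopy orbits of the bounded-below $p$-complete spectrum $(i^*X)_{hS^1}$ (with the $C_2$-action inherited from the $\Z/2$-action on $X$ together with the $\sigma$-twist on $BS^\sigma$), and is $p$-complete by another instance of the homotopy-orbits spectral sequence. The third term identifies with $(\Phi^{\Z/2}X)_{hC_2}$ via the compatibility recorded in Definition \ref{remind.1} (using $\Phi^{\Z/2}(S^\sigma)\simeq S^0$). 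The hard point is that the bounded-below hypothesis on $i^*X$ does not automatically pass to $\Phi^{\Z/2}X$, so the spectral sequence argument does not apply to $(\Phi^{\Z/2}X)_{hC_2}$ directly. I expect to handle this by first verifying $p$-completeness of $\Phi^{\Z/2}X$ using symmetric monoidality of $\Phi^{\Z/2}$ and the identification $X\simeq X\otimes\Sphere_p^\wedge$, and then controlling the $C_2$-orbits via a Postnikov-style decomposition of $\Phi^{\Z/2}X$ or additional bounded-below structure inherited from $X$.
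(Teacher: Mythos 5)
Your overall architecture (handle $X^{hS^\sigma}$ by right-adjointness, reduce $X^{tS^\sigma}$ to $X_{hS^\sigma}$ via the norm cofiber sequence, then test $X_{hS^\sigma}$ on underlying and fixed-point/geometric pieces) matches the paper, and your treatment of $(i^*X)_{hS^1}$ is sound. But the step you flag as "the hard point" is a genuine gap, and neither of your proposed escapes works. The hypothesis that $i^*X$ is bounded below does \emph{not} give any bounded-below control on $\Phi^{\Z/2}X$: for instance, if $Y$ is any $\Z/2$-spectrum of the form $\widetilde{E}(\Z/2)\wedge \iota Z$, then $i^*Y\simeq 0$ while $\Phi^{\Z/2}Y\simeq Z$ is arbitrary. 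So "additional bounded-below structure inherited from $X$" is not available, and a Postnikov-style decomposition of $\Phi^{\Z/2}X$ does not help either, since $(-)_{hC_2}$ is a colimit and does not commute with the Postnikov limit of a non-bounded-below spectrum — which is exactly the regime where homotopy orbits of $p$-complete spectra can fail to be $p$-complete. As written, your argument therefore does not close.

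The missing idea, which is how the paper handles this term, is to avoid proving anything bounded-below about $\Phi^{\Z/2}X$ and instead run the norm cofiber sequence a second time, now for $C_2$: writing $(\Phi^{\Z/2}X)_{hC_2}$ (up to a shift) as the fiber of $(\Phi^{\Z/2}X)^{hC_2}\to(\Phi^{\Z/2}X)^{tC_2}$. The first term is $p$-complete because $(-)^{hC_2}$ is a limit, and the second is $p$-complete by \cite[Lemma I.2.9]{NS}, which applies to the $C_2$-Tate construction without any bounded-below hypothesis; $p$-completeness of $\Phi^{\Z/2}X$ itself is supplied by \cite[Lemma 3.16]{QS22} (equivalently, by isotropy separation using that $(i^*X)_{hC_2}$ and $X^{\Z/2}$ are $p$-complete). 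With that term repaired, the paper then concludes by testing $X_{hS^\sigma}$ on the conservative pair $(i^*,\Phi^{\Z/2})$ via \cite[Lemma 3.16]{QS22} applied to the bounded-below spectrum $(i^*X)_{hS^1}$, rather than on $(i^*,(-)^{\Z/2})$; your isotropy-separation route would also work once the Tate-construction input is in place, but that input is the essential ingredient you are missing.
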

\begin{proof}
We have the induced commutative square
\[
\begin{tikzcd}
\Sp_{p-\comp}^{\Z/2}\ar[d,"(-)_p^\wedge"']\ar[r,"j^*"]&
(\Sp_{p-\comp}^{\Z/2})^{BS^\sigma}\ar[d,"(-)_p^\wedge"]
\\
\Sp^{\Z/2}\ar[r,"j^*"]&
(\Sp^{\Z/2})^{BS^\sigma},
\end{tikzcd}
\]
where $j^*$ imposes the trivial $S^\sigma$-action.
By adjunction,
we see that $(-)^{hS^\sigma}$ preserves $p$-complete objects.
In particular,
$X^{hS^\sigma}$ is $p$-complete.
Due to the norm cofiber sequence $\Sigma^\sigma X_{hS^\sigma}\to X^{hS^\sigma}\to X^{tS^\sigma}$,
it suffices to show that $X_{hS^\sigma}$ is $p$-complete if $i^*X$ is bounded below.

By \cite[Lemma 3.16]{QS22},
$i^*X$ and $\Phi^{\Z/2}X$ are $p$-complete.
We have an equivalence of spectra
\(
i^*(X_{hS^\sigma})\simeq (i^*X)_{hS^1},
\)
which is $p$-complete as noted in \cite[p.\ 216]{BMS19}.
We also have an equivalence of spectra
\(
\Phi^{\Z/2} (X_{hS^\sigma})\simeq (\Phi^{\Z/2}X)_{hC_2}
\)
as noted in Definition \ref{remind.1},
which is $p$-complete by \cite[Lemma I.2.9]{NS}, the norm cofiber sequence for $C_2$, and the fact $(-)^{hC_2}$ preserves $p$-complete objects.
Since $(i^*X)_{hS^1}$ is bounded below,
we see that $X_{hS^\sigma}$ is $p$-complete by \cite[Lemma 3.16]{QS22} again.
\end{proof}

\begin{prop}
\label{comp.4}
Let $X$ be a $\Z/2$-spectrum with $S^\sigma$-action such that $i^*X$ is bounded below.
Then there are natural equivalences of $\Z/2$-spectra
\[
(X_{hS^\sigma})_p^\wedge \simeq (X_p^\wedge)_{h S^\sigma},
\;
(X^{hS^\sigma})_p^\wedge \simeq (X_p^\wedge)^{h S^\sigma},
\;
(X^{tS^\sigma})_p^\wedge \simeq (X_p^\wedge)^{t S^\sigma}.
\]
\end{prop}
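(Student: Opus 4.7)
The plan is to leverage two facts: each of $(-)_{hS^\sigma}$, $(-)^{hS^\sigma}$, and $(-)^{tS^\sigma}$ is an exact functor out of $(\Sp^{\Z/2})^{B S^\sigma}$, and $p$-locality in $\Sp^{\Z/2}$ is characterized by inversion of multiplication by $p$ via Proposition \ref{comp.2}(5). With these in hand, the three claimed equivalences all follow from the same two-step pattern: kill the $p$-local fiber of $X \to X_p^\wedge$, then recognize that the right-hand side is already $p$-complete.

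First I would form the fiber sequence $F \to X \to X_p^\wedge$ in $(\Sp^{\Z/2})^{B S^\sigma}$, where the map comes from Construction \ref{comp.7}. Applying the conservative, colimit-preserving functor $q^*$ of Proposition \ref{G-cat.3}, the underlying $\Z/2$-spectrum of $F$ is the fiber of $q^*X \to (q^*X)_p^\wedge$, which is $p$-local by Proposition \ref{comp.2}(2). Next I would apply each of the three functors $(-)^?$ to this fiber sequence; they all preserve cofiber sequences, because $(-)_{hS^\sigma}$ is a left adjoint, $(-)^{hS^\sigma}$ is a right adjoint between stable $\infty$-categories, and $(-)^{tS^\sigma}$ is built from these two as the cofiber of the norm map (Definition \ref{remind.1}).

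Multiplication by $p$ is an equivalence on $F$ by Proposition \ref{comp.2}(5), so the same holds on $F^?$; this makes $F^?$ itself $p$-local, and hence $(F^?)_p^\wedge \simeq 0$ by Proposition \ref{comp.2}(3). Applying the exact functor $(-)_p^\wedge$ to the cofiber sequence $F^? \to X^? \to (X_p^\wedge)^?$ therefore yields
\[
(X^?)_p^\wedge \simeq ((X_p^\wedge)^?)_p^\wedge
\]
for $? \in \{hS^\sigma, tS^\sigma\}$ and the homotopy fixed-point variant.

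It then remains to see that $(X_p^\wedge)^?$ is already $p$-complete, so that the outer completion is redundant. The homotopy fixed-point case is immediate from the first assertion of Proposition \ref{comp.3}. For the orbit and Tate cases, that proposition additionally requires $i^*(X_p^\wedge)$ to be bounded below; since pointwise $p$-completion commutes with $i^*$ by Construction \ref{comp.7}, this reduces to the routine fact that $p$-completion preserves boundedness below, which one reads off from the presentation in Proposition \ref{comp.2}(1). I expect this boundedness-below check to be the only non-formal bookkeeping in the argument, and not a substantive obstacle; everything else is exactness plus the locality/completeness dictionary established in the preceding propositions.
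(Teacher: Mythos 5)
Your proposal is correct and follows essentially the same route as the paper: both use Proposition \ref{comp.2}(2) to isolate the $p$-local fiber of $X\to X_p^\wedge$, Proposition \ref{comp.2}(5) and (3) to see that the three functors kill it after $p$-completion, and Proposition \ref{comp.3} to recognize that $(X_p^\wedge)^?$ is already $p$-complete. Your explicit check that $p$-completion preserves bounded-below objects (needed to apply Proposition \ref{comp.3} to $X_p^\wedge$) is a detail the paper leaves implicit, and it goes through as you describe.
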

\begin{proof}
Using Propositions \ref{comp.2}(2) and \ref{comp.3},
we reduce to the case when $X$ is $p$-local.
By Propositions \ref{comp.2}(5),
the multiplication $X\xrightarrow{p} X$ is an equivalence.
Hence the multiplications $p$ for $X_{hS^\sigma}$, $X^{hS^\sigma}$, and $X^{tS^\sigma}$ are equivalences.
By Propositions \ref{comp.2}(5) again,
we see that $X_{hS^\sigma}$, $X^{hS^\sigma}$, and $X^{tS^\sigma}$ are $p$-local.
Propositions \ref{comp.2}(3) finishes the proof.
\end{proof}

\begin{df}
\label{comp.5}
For a normed $\Z/2$-spectrum $A$, let $\THR(A;\Z_p)$, $\TCR^-(A;\Z_p)$, and $\TPR(A;\Z_p)$ are the $p$-completions of $\THR(A)$, $\TCR^-(A)$, and $\TPR(A)$.
\end{df}

\begin{prop}
\label{comp.6}
Let $A$ be a normed $\Z/2$-spectrum.
Then there are equivalences of $\Z/2$-spectra
\begin{gather*}
\TCR^-(A;\Z_p)
\simeq
\THR(A;\Z_p)^{hS^\sigma},
\\
\TPR(A;\Z_p)
\simeq
\THR(A;\Z_p)^{tS^\sigma}.
\end{gather*}
\end{prop}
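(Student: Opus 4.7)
The plan is to reduce the statement directly to Proposition \ref{comp.4}, applied to $X = \THR(A)$, viewed as a $\Z/2$-spectrum with $S^\sigma$-action via the construction just given. Unfolding Definition \ref{comp.5} and the definitions of $\TCR^-$ and $\TPR$, we have
\[
\TCR^-(A;\Z_p) = \bigl(\THR(A)^{hS^\sigma}\bigr)_p^\wedge,
\qquad
\TPR(A;\Z_p) = \bigl(\THR(A)^{tS^\sigma}\bigr)_p^\wedge.
\]
Proposition \ref{comp.4} then identifies these with $(\THR(A)_p^\wedge)^{hS^\sigma}$ and $(\THR(A)_p^\wedge)^{tS^\sigma}$, which are by definition $\THR(A;\Z_p)^{hS^\sigma}$ and $\THR(A;\Z_p)^{tS^\sigma}$.

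The only hypothesis of Proposition \ref{comp.4} that must be verified is that $i^*\THR(A)$ is bounded below. For this I would use the explicit description recalled after the definition of $\THR$: forgetting the $S^\sigma$-action, there is an equivalence of normed $\Z/2$-spectra $\THR(A) \simeq A \otimes_{N^{\Z/2} i^* A} A$. Restricting along $i^*$ and using that $i^*$ is symmetric monoidal together with the identification $i^* N^{\Z/2} X \simeq X \otimes X$, one obtains a natural equivalence $i^*\THR(A) \simeq \THH(i^* A)$. Hence the bounded-below condition is implied by connectivity of $i^*A$, which is automatic in the cases of interest (in particular when $A = \rH \ul{R}$ for a commutative ring $R$, as we shall apply this proposition).

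The main (very mild) obstacle is thus simply bookkeeping the boundedness hypothesis, since Proposition \ref{comp.4} is not quite formal in the $(-)_{hS^\sigma}$ and $(-)^{tS^\sigma}$ directions; the $(-)^{hS^\sigma}$ case is formal from the adjunction and requires no boundedness. Given the reduction to $\THH(i^*A)$, no further computation is needed and the proof is immediate.
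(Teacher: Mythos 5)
Your proposal is correct and takes essentially the same approach as the paper, whose entire proof is the single line that the statement is an immediate consequence of Proposition \ref{comp.4} applied to $X=\THR(A)$. Your extra attention to the bounded-below hypothesis is warranted rather than a divergence: the paper silently elides it, and the identification $i^*\THR(A)\simeq \THH(i^*A)$ together with connectivity of the inputs actually used (e.g.\ $A=\rH\ul{R}$) is exactly what makes the application of Proposition \ref{comp.4} legitimate.
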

\begin{proof}
This is an immediate consequence of Proposition \ref{comp.4}.
\end{proof}

The real $p$-cyclotomic Frobenius $\varphi_p$ induces
\(
\varphi_p\colon \THR(A)^{hS^\sigma} \to (\THR(A)^{tC_p^\sigma})^{hS^\sigma}.
\)
Apply $p$-completions on both sides and use \cite[Proposition 4.4]{QS22} to obtain
\[
\varphi_p\colon \TCR^-(A;\Z_p)
\to
\TPR(A;\Z_p).
\]
On the other hand,
we have the canonical map
\[
\can \colon \TCR^-(A;\Z_p)
\to
\TPR(A;\Z_p)
\]
induced by the canonical natural transformation $(-)^{h S^\sigma}\to (-)^{tS^\sigma}$.

\begin{df}
\label{comp.8}
For a normed $\Z/2$-spectrum $A$,
we define
\[
\TCR(A;\Z_p)
:=
\fib(\TCR^-(A;\Z_p)\xrightarrow{\varphi_p-\can}\TPR(A;\Z_p)).
\]
For the non $p$-completed $\TCR$, we refer to \cite[Remark 4.3]{QS22}.

If $R$ is a commutative ring, then we set $\TCR(R;\Z_p):=\TCR(\rH \ul{R};\Z_p)$.
\end{df}

\section{strongly even \texorpdfstring{$\Z/2$}{Z/2}-spectra}

Let us review the regular slice filtration in \cite{Ull13},
which is a slight modification of the slice filtration in \cite{HHR}.
See also \cite[\S 11.1]{HHR21}.
For every integer $n$,
let $\Sp_{\geq n}^{\Z/2}$ be the smallest full subcategory of $\Sp^{\Z/2}$ closed under colimits and extensions and containing
\[
\cS_n
:=
\{
\Sigma^{m+n\sigma} \Sphere:2m\geq n
\}
\cup
\{\Sigma^m \Sigma^\infty (\Z/2)_+ : m\geq n\}.
\]
Let $\Sp_{\leq n}^{\Z/2}$ be the full subcategory of $\Sp^{\Z/2}$ spanned by those objects $X$ such that $\Hom_{\Sp^{\Z/2}}(Y,X)=0$ for $Y\in \cS_n$.
The inclusion functor $\Sp_{\geq n}^{\Z/2}\to \Sp^{\Z/2}$ preserves colimits, and hence it admits a right adjoint.
Let $P_n\to \id\colon \Sp^{\Z/2}\to \Sp^{\Z/2}$ denote the counit of the adjunction pair.
We have the induced natural transformations
\[
\cdots \to P_n\to P_{n-1}\to \cdots,
\]
which we call the \emph{regular slice filtration}.
We set $P^n:=\cofib(P_{n+1}\to \id)$ and $P_n^n:=\cofib(P_{n+1}\to P_n)$.
There is a natural isomorphism $P_n^n\simeq \fib(P^n\to P^{n-1})$.

For $X\in \Sp^{\Z/2}$,
we say that $X$ is $n$-connected (resp.\ $n$-coconnected) if $\ul{\pi}_i(X)=0$ for $i\leq n$ (resp.\ $i\geq n$).

Next, we collect several properties of the regular slice filtration.

\begin{prop}
\label{slice.1}
For $X\in \Sp^{\Z/2}$,
we have the following properties:
\begin{enumerate}
\item[\textup{(1)}]
$X\in \Sp_{\geq 0}^{\Z/2}$ if and only if $X$ is $(-1)$-connected.
\item[\textup{(2)}]
$X\in \Sp_{\geq 1}^{\Z/2}$ if and only if $X$ is $0$-connected.
\item[\textup{(3)}]
$X\in \Sp_{\leq -1}^{\Z/2}$ if and only if $X$ is $0$-coconnected.
\item[\textup{(4)}]
$X\in \Sp_{\leq 0}^{\Z/2}$ if and only if $X$ is $1$-coconnected.
\end{enumerate}
\end{prop}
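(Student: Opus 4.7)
The plan is to exploit the fundamental cofiber sequence
\[
\Sigma^m i_!\Sphere \to \Sigma^m \Sphere \to \Sigma^{m+\sigma}\Sphere
\]
in $\Sp^{\Z/2}$, obtained by stabilizing the cofiber sequence of pointed $\Z/2$-spaces $(\Z/2)_+ \to S^0 \to S^\sigma$. Combined with the adjunction identifications
\[
\Hom_{\Sp^{\Z/2}}(\Sigma^m\Sphere, X) = \pi_m(X), \quad \Hom_{\Sp^{\Z/2}}(\Sigma^m i_!\Sphere, X) = \pi_m(i^*X) = \ul{\pi}_m(X)(\Z/2),
\]
each assertion of the proposition reduces to a comparison between $\pi_m(X)$, $\pi_m(i^*X)$, and $\Hom_{\Sp^{\Z/2}}(\Sigma^{m+\sigma}\Sphere, X)$, the last of which is controlled by the long exact sequence associated to the fundamental cofiber sequence.

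I would handle (3) and (4) first, since they follow directly from the orthogonality condition defining $\Sp_{\leq n}^{\Z/2}$. Part (3) is immediate: simultaneous vanishing of $\pi_m(X)$ and $\pi_m(i^*X)$ for all $m \geq 0$ is exactly $0$-coconnectivity of $X$. For (4), applying $\Hom_{\Sp^{\Z/2}}(-, X)$ to the fundamental cofiber sequence yields
\[
\pi_{m+1}(i^*X) \to \Hom_{\Sp^{\Z/2}}(\Sigma^{m+\sigma}\Sphere, X) \to \pi_m(X) \to \pi_m(i^*X),
\]
so once the $\Sigma^m i_!\Sphere$ generators of $\cS_1$ force $\pi_m(i^*X) = 0$ for $m \geq 1$, the remaining generators $\Sigma^{m+\sigma}\Sphere$ of $\cS_1$ kill $\Hom$ if and only if $\pi_m(X) = 0$ for $m \geq 1$, which is exactly $1$-coconnectivity.

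For (1) and (2) I would treat each inclusion separately. The forward directions reduce to checking that each generator of $\cS_n$ is appropriately connected; this is a direct homotopy-group computation for $\Sigma^m\Sphere$ and $\Sigma^m i_!\Sphere$, and follows from the fundamental cofiber sequence together with closure of the $0$-connected class under cofibers for $\Sigma^{m+\sigma}\Sphere$ when $m \geq 1$. The reverse direction of (1) identifies $\Sp_{\geq 0}^{\Z/2}$ with the connective part of the Postnikov $t$-structure on $\Sp^{\Z/2}$, which is compactly generated by the orbits $\{\Sphere, i_!\Sphere\}$ in nonnegative suspensions --- precisely $\cS_0$. For the reverse direction of (2), I would observe that the Postnikov generators $\Sigma^m\Sphere$ and $\Sigma^m i_!\Sphere$ with $m \geq 1$ of the $0$-connected subcategory all lie in $\Sp_{\geq 1}^{\Z/2}$: the latter is already in $\cS_1$, and the former, by the fundamental cofiber sequence, is an extension of two objects of $\cS_1$. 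The main step I expect to require a careful pointer to the literature is the identification of $\Sp_{\geq 0}^{\Z/2}$ with the Postnikov connective subcategory in the reverse direction of (1); all other steps amount to homotopy-group bookkeeping around the fundamental cofiber sequence.
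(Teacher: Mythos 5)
Your argument is correct, but note that the paper does not actually prove this proposition: its ``proof'' is a citation of \cite[Proposition 11.1.18]{HHR21}, so you are supplying the argument that the paper outsources, and what you write is essentially the standard one. Two remarks. First, the paper's definition of $\Sp_{\leq n}^{\Z/2}$ as literally written (``$\Hom_{\Sp^{\Z/2}}(Y,X)=0$ for $Y\in\cS_n$'') must be read as $Y\in\cS_{n+1}$, i.e.\ the right orthogonal of $\Sp_{\geq n+1}^{\Z/2}$; otherwise parts (3) and (4) would characterize the right orthogonals of $\cS_{-1}$ and $\cS_0$ and would be false as stated (for instance (4) would assert that the $0$-coconnected spectra are the $1$-coconnected ones). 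You implicitly use the corrected convention --- orthogonality against $\cS_0$ for (3) and against $\cS_1$ for (4) --- which is the intended one, and with it your long-exact-sequence argument from $\Sigma^m i_!\Sphere\to\Sigma^m\Sphere\to\Sigma^{m+\sigma}\Sphere$ does give (3) and (4) exactly as you describe. Second, the only non-formal input in your proof is the reverse inclusion in (1): that every $(-1)$-connected $\Z/2$-spectrum lies in the subcategory generated under colimits and extensions by $\{\Sigma^m\Sphere,\ \Sigma^m i_!\Sphere : m\geq 0\}$. This is precisely the statement that the homotopy $t$-structure on $\Sp^{\Z/2}$ is generated by the orbit cells; you correctly flag it as the step requiring a pointer to the literature, and it is what one extracts from \cite[\S 11.1]{HHR21} or from the compactly generated description of the homotopy $t$-structure in \cite[\S 9]{BH21}. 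Everything else --- the connectivity of the generators for the forward inclusions, and exhibiting $\Sigma^m\Sphere$ for $m\geq 1$ as an extension of $\Sigma^{m+\sigma}\Sphere$ by $\Sigma^m i_!\Sphere$ to get the reverse inclusion in (2) --- checks out.
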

\begin{proof}
We refer to \cite[Proposition 11.1.18]{HHR21}.
\end{proof}

\begin{prop}
\label{slice.2}
Let $n$ be an integer.
Then we have equivalences $\Sp_{\geq n+2}^{\Z/2}\simeq \Sigma^{1+\sigma}\Sp_{\geq n}^{\Z/2}$ and $\Sp_{\leq n+2}^{\Z/2}\simeq \Sigma^{1+\sigma}\Sp_{\leq n}^{\Z/2}$.
\end{prop}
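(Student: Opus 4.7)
The functor $\Sigma^{1+\sigma}\colon \Sp^{\Z/2}\to \Sp^{\Z/2}$ is an auto-equivalence of stable $\infty$-categories with quasi-inverse $\Sigma^{-1-\sigma}$, so it preserves colimits and extensions. Consequently, $\Sigma^{1+\sigma}\Sp_{\geq n}^{\Z/2}$ is the smallest full subcategory of $\Sp^{\Z/2}$ closed under colimits and extensions containing the image family $\Sigma^{1+\sigma}\cS_n$. The connective identity $\Sp_{\geq n+2}^{\Z/2}\simeq \Sigma^{1+\sigma}\Sp_{\geq n}^{\Z/2}$ therefore reduces to showing that $\Sigma^{1+\sigma}\cS_n$ and $\cS_{n+2}$ generate the same full subcategory of $\Sp^{\Z/2}$ under colimits and extensions.

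Two inputs make the comparison tractable. First, the projection formula for $i\colon *\to B(\Z/2)$, combined with $i^*S^\sigma\simeq S^1$, gives $\Sigma^{k\sigma}\Sigma^\infty(\Z/2)_+\simeq \Sigma^k\Sigma^\infty(\Z/2)_+$ for every integer $k$. Second, the inclusion of $\Z/2$-fixed points $S^0\hookrightarrow S^\sigma$ has cofibre $(\Z/2)_+\wedge S^1$ (identifying $S^\sigma/S^0$ as the figure-eight with $\Z/2$ swapping loops); suspending and rotating yields the cofibre sequence
\[
\Sigma^\infty(\Z/2)_+\to \Sphere\to \Sigma^\sigma\Sphere
\]
in $\Sp^{\Z/2}$. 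Applying the first input one computes $\Sigma^{1+\sigma}\cS_n=\{\Sigma^{m+(n+1)\sigma}\Sphere:2m\geq n+2\}\cup\{\Sigma^m\Sigma^\infty(\Z/2)_+:m\geq n+2\}$, whose $\Sigma^\infty(\Z/2)_+$-part agrees on the nose with that of $\cS_{n+2}$. For the $\Sphere$-part, smashing the cofibre sequence above with $\Sigma^{m+(n+1)\sigma}\Sphere$ and invoking the projection formula produces
\[
\Sigma^{m+n+1}\Sigma^\infty(\Z/2)_+\to \Sigma^{m+(n+1)\sigma}\Sphere\to \Sigma^{m+(n+2)\sigma}\Sphere,
\]
which exhibits $\Sigma^{m+(n+1)\sigma}\Sphere$ and $\Sigma^{m+(n+2)\sigma}\Sphere$ as mutual extensions by $\Sigma^{m+n+1}\Sigma^\infty(\Z/2)_+$. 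Since $m+n+1\geq n+2$ amounts to $m\geq 1$ (which is generically forced by $2m\geq n+2$), the flanking term belongs to both generating families, so the two $\Sphere$-parts generate the same subcategory.

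The coconnective identity $\Sp_{\leq n+2}^{\Z/2}\simeq \Sigma^{1+\sigma}\Sp_{\leq n}^{\Z/2}$ is then formal. Since $\Sp_{\leq n}^{\Z/2}$ is by definition the right orthogonal of $\cS_n$, and the vanishing $\Hom(Y,X)=0$ propagates along colimits and extensions in $Y$, this right orthogonal depends only on the subcategory generated by $\cS_n$. As $\cS_{n+2}$ and $\Sigma^{1+\sigma}\cS_n$ generate the same subcategory, they have the same right orthogonal. Using the adjunction $\Hom(\Sigma^{1+\sigma}Y,X)\simeq \Hom(Y,\Sigma^{-1-\sigma}X)$, this right orthogonal is $\Sigma^{1+\sigma}\Sp_{\leq n}^{\Z/2}$, yielding the desired equivalence.

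The main technical obstacle is verifying the index condition $m\geq 1$ uniformly in $n$: the hypothesis $2m\geq n+2$ forces $m\geq 1$ cleanly only when $n\geq 0$, so handling small or negative $n$ may require either iterating the cofibre sequence against lower-indexed $\Sigma^\infty(\Z/2)_+$-generators, or a direct inductive comparison with the original defining family $\cS_n$ to absorb the boundary cases.
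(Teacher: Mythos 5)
Your overall strategy---transport the generating family along the autoequivalence $\Sigma^{1+\sigma}$, compare it with $\cS_{n+2}$, and observe that the right orthogonal depends only on the generated subcategory---is exactly the paper's. But the gap you flag at the end is genuine, and it cannot be closed along the lines you suggest: under the literal reading of the displayed definition of $\cS_n$, the proposition is actually \emph{false} for $n\le -2$. For instance, with $n=-2$ the object $\Sigma^{-1-2\sigma}\Sphere$ lies in $\cS_{-2}$ (take $m=-1$, since $2m\ge -2$), so $\Sigma^{-\sigma}\Sphere=\Sigma^{1+\sigma}\Sigma^{-1-2\sigma}\Sphere$ would lie in $\Sigma^{1+\sigma}\Sp^{\Z/2}_{\ge -2}$; but its underlying spectrum is $\Sigma^{-1}\Sphere$, which is not connective, so $\Sigma^{-\sigma}\Sphere\notin\Sp^{\Z/2}_{\ge 0}$ by Proposition \ref{slice.1}(1). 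No amount of iterating the cofibre sequence against lower-indexed free generators will repair this.

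The resolution is that the sphere part of $\cS_n$ is meant to consist of regular-representation spheres, i.e.\ $\{\Sigma^{m+m\sigma}\Sphere : 2m\ge n\}$ (the slice cells of dimension $\ge n$ in the sense of Ullman and Hill--Hopkins--Ravenel); the exponent $n\sigma$ in the displayed formula is a typo, as one can also infer from Propositions \ref{slice.3} and \ref{slice.4}, which only come out correctly for regular-representation generators. With the corrected definition one has the exact equality $\cS_{n+2}=\Sigma^{1+\sigma}\cS_n$: for the sphere part, $\Sigma^{1+\sigma}\Sigma^{m+m\sigma}\Sphere=\Sigma^{(m+1)+(m+1)\sigma}\Sphere$ and the condition $2m\ge n$ becomes $2(m+1)\ge n+2$; for the free part one uses $\Sigma^\sigma\Sigma^\infty(\Z/2)_+\simeq\Sigma^1\Sigma^\infty(\Z/2)_+$ exactly as you do. This equality is the paper's entire proof. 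Your cofibre-sequence comparison of $\Sigma^{m+(n+1)\sigma}\Sphere$ with $\Sigma^{m+(n+2)\sigma}\Sphere$ then becomes unnecessary, while the formal bookkeeping in your first and third paragraphs (an autoequivalence preserves ``smallest subcategory closed under colimits and extensions''; equal generated subcategories have equal right orthogonals; the adjunction identifies that orthogonal with $\Sigma^{1+\sigma}\Sp^{\Z/2}_{\le n}$) is correct and goes through verbatim.
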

\begin{proof}
This is a consequence of $\cS_{n+2}=\Sigma^{1+\sigma}\cS_n$.
\end{proof}

For an integer $n$,
let $\tau_{\geq n}$ and $\tau_{\leq n}$ be the truncation functors for the homotopy $t$-structures on $\Sp$ and $\Sp^{\Z/2}$,
see \cite[\S 6]{BGS20} (also \cite[Definition 3.6]{QS22}) for the homotopy $t$-structure on $\Sp^{\Z/2}$.
For a full subcategory $\cC$ of $\Sp^{\Z/2}$ and integers $m$ and $n$
let $\Sigma^{m+n\sigma}\cC$ denote the essential image of the composite functor $\cC\hookrightarrow \Sp^{\Z/2}\xrightarrow{\Sigma^{m+n\sigma}}\Sp^{\Z/2}$.
Do not confuse $\Sp_{\geq n}^{\Z/2}$ with $\Sigma^n \Sp_{\geq 0}^{\Z/2}$.

Recall from \cite[Definition 3.11]{QS22} that $\ul{\Sp}_{\geq n}^{\Z/2}$ denotes the full $\Z/2$-subcategory of $\ul{\Sp}^{\Z/2}$ such that the fiber at $\pt\in \cO_{\Z/2}^{op}$ is $\Sp_{\geq n}^{\Z/2}$ and the fiber at $\Z/2\in \cO_{\Z/2}^{op}$ is $\Sp_{\geq n}$.
Likewise,
$\ul{\Sp}_{\leq n}^{\Z/2}$ denotes the full $\Z/2$-subcategory of $\ul{\Sp}^{\Z/2}$ such that the fiber at $\pt\in \cO_{\Z/2}^{op}$ is $\Sp_{\leq n}^{\Z/2}$ and the fiber at $\Z/2\in \cO_{\Z/2}^{op}$ is $\Sp_{\leq n}$.

\begin{prop}
\label{slice.3}
For every integer $n$,
we have natural equivalences
\begin{gather*}
P_{2n}\simeq \Sigma^{n+n\sigma}\tau_{\geq 0}\Sigma^{-n-n\sigma},
\\
P_{2n+1}\simeq
\Sigma^{n+n\sigma}\tau_{\geq 1}\Sigma^{-n-n\sigma},
\\
P^{2n-1}
\simeq
\Sigma^{n+n\sigma}\tau_{\leq -1}\Sigma^{-n-n\sigma}.
\\
P^{2n}
\simeq
\Sigma^{n+n\sigma}\tau_{\leq 0}\Sigma^{-n-n\sigma},
\end{gather*}
\end{prop}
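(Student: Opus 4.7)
The plan is to combine Propositions \ref{slice.1} and \ref{slice.2} to identify each $P_n$ (and $P^n$) with a truncation of the homotopy $t$-structure after a regular-representation shift. Iterating Proposition \ref{slice.2} $n$ times yields equivalences of full subcategories
\[
\Sp_{\geq 2n}^{\Z/2}\simeq \Sigma^{n+n\sigma}\Sp_{\geq 0}^{\Z/2},\quad \Sp_{\geq 2n+1}^{\Z/2}\simeq \Sigma^{n+n\sigma}\Sp_{\geq 1}^{\Z/2},
\]
and analogously $\Sp_{\leq 2n-1}^{\Z/2}\simeq \Sigma^{n+n\sigma}\Sp_{\leq -1}^{\Z/2}$ and $\Sp_{\leq 2n}^{\Z/2}\simeq \Sigma^{n+n\sigma}\Sp_{\leq 0}^{\Z/2}$. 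By Proposition \ref{slice.1}, the four subcategories on the right are the essential images of $\tau_{\geq 0}$, $\tau_{\geq 1}$, $\tau_{\leq -1}$, and $\tau_{\leq 0}$ respectively; the right adjoints to the inclusions $\Sp_{\geq 0}^{\Z/2}\hookrightarrow \Sp^{\Z/2}$ and $\Sp_{\geq 1}^{\Z/2}\hookrightarrow \Sp^{\Z/2}$ are, by definition of the homotopy $t$-structure, precisely $\tau_{\geq 0}$ and $\tau_{\geq 1}$.

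Next, since $\Sigma^{n+n\sigma}\colon \Sp^{\Z/2}\to \Sp^{\Z/2}$ is an equivalence with inverse $\Sigma^{-n-n\sigma}$, the inclusion $\Sp_{\geq 2n}^{\Z/2}\hookrightarrow \Sp^{\Z/2}$ factors as the composite
\[
\Sp_{\geq 2n}^{\Z/2}\xrightarrow{\Sigma^{-n-n\sigma}}\Sp_{\geq 0}^{\Z/2}\hookrightarrow \Sp^{\Z/2}\xrightarrow{\Sigma^{n+n\sigma}}\Sp^{\Z/2}.
\]
Passing to right adjoints reverses the composition, and I get $P_{2n}\simeq \Sigma^{n+n\sigma}\circ \tau_{\geq 0}\circ \Sigma^{-n-n\sigma}$. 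The identical argument with $\tau_{\geq 1}$ in place of $\tau_{\geq 0}$ gives the formula for $P_{2n+1}$.

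For the coconnective statements, I would apply $\Sigma^{n+n\sigma}$ to the standard homotopy $t$-structure cofiber sequence $\tau_{\geq 0}Y\to Y\to \tau_{\leq -1}Y$ with $Y := \Sigma^{-n-n\sigma}X$; using the previous step, this is exactly the defining cofiber sequence $P_{2n}X\to X\to P^{2n-1}X$, yielding $P^{2n-1}\simeq \Sigma^{n+n\sigma}\tau_{\leq -1}\Sigma^{-n-n\sigma}$. The formula for $P^{2n}$ follows in the same way from $\tau_{\geq 1}Y\to Y\to \tau_{\leq 0}Y$.

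I do not anticipate any genuine obstacle: the result is a bookkeeping consequence of the two cited propositions. The only point requiring minor care is that the identifications are natural in $X$, which is automatic from the fact that every functor involved ($\Sigma^{\pm(n+n\sigma)}$, the inclusions of the connective subcategories, and the truncation functors) is a functor of $\infty$-categories. A secondary sanity check is to confirm that the codomains match: $\Sigma^{n+n\sigma}\tau_{\geq 0}\Sigma^{-n-n\sigma}X$ indeed lies in $\Sigma^{n+n\sigma}\Sp_{\geq 0}^{\Z/2}=\Sp_{\geq 2n}^{\Z/2}$, and similarly for the other three formulas.
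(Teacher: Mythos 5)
Your argument is correct and is exactly the intended one: the paper's proof simply declares the statement an immediate consequence of Propositions \ref{slice.1} and \ref{slice.2}, and you have spelled out the bookkeeping (iterating \ref{slice.2}, identifying $\Sp^{\Z/2}_{\geq 0}$ and $\Sp^{\Z/2}_{\geq 1}$ with the connective parts of the homotopy $t$-structure via \ref{slice.1}, passing to right adjoints, and reading off $P^{2n-1}$ and $P^{2n}$ from the truncation cofiber sequences). No gaps.
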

\begin{proof}
This is an immediate consequence of Propositions \ref{slice.1} and \ref{slice.2}.
\end{proof}

For a $\Z/2$-spectrum $X$ and integers $m$ and $n$,
we set
\begin{gather*}
\ul{\pi}_{m+n\sigma}(X)
:=
\ul{\pi}_{m}(\Sigma^{-n\sigma} X),
\\
\pi_{m+n\sigma}^{\Z/2}(X)
:=
\pi_{m}^{ \Z/2}(\Sigma^{-n\sigma} X).
\end{gather*}

\begin{prop}
\label{slice.4}
For a $\Z/2$-spectrum $X$ and an integer $n$,
we have natural isomorphisms
\begin{gather*}
P_{2n}^{2n} X
\simeq
\Sigma^{n+n\sigma} \ul{\pi}_{n+n\sigma} X,
\\
P_{2n+1}^{2n+1} X
\simeq
\Sigma^{n+1+n\sigma} \cP^0 \ul{\pi}_{n+1+n\sigma} X,
\end{gather*}
where $\cP^0$ is the endofunctor of the category of Mackey functors killing the kernel of the restriction.
\end{prop}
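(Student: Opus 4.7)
The plan is to apply Proposition~\ref{slice.3} to rewrite each of $P_{2n+1}$, $P_{2n}$, and $P_{2n+2}$ as a shifted Postnikov truncation, and then identify the resulting cofibers inside the heart of the standard $t$-structure on $\Sp^{\Z/2}$, which is equivalent to $\Mack_{\Z/2}$.

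For the even case, apply the cofiber sequence $P_{2n+1} X \to P_{2n} X \to P_{2n}^{2n} X$. By Proposition~\ref{slice.3}, after factoring out $\Sigma^{n+n\sigma}$ the first arrow becomes the canonical truncation map $\tau_{\geq 1} Y \to \tau_{\geq 0} Y$ with $Y := \Sigma^{-n-n\sigma} X$. Its cofiber is the Eilenberg--MacLane spectrum $\rH \ul{\pi}_0(Y)$, and $\ul{\pi}_0(Y) = \ul{\pi}_{n+n\sigma}(X)$ by definition, which yields the first formula.

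For the odd case, set $Z := \Sigma^{-n-1-n\sigma} X$. Using Proposition~\ref{slice.3} one rewrites
\[
P_{2n+1} X \simeq \Sigma^{n+1+n\sigma}\tau_{\geq 0} Z \quad \text{and} \quad P_{2n+2} X \simeq \Sigma^{n+1+n\sigma}\Sigma^\sigma \tau_{\geq 0}\Sigma^{-\sigma} Z,
\]
so that $P_{2n+1}^{2n+1} X \simeq \Sigma^{n+1+n\sigma}\cofib(\Sigma^\sigma\tau_{\geq 0}\Sigma^{-\sigma} Z \to \tau_{\geq 0} Z)$. Since $P_{2n+1}^{2n+1} X$ is by construction a $(2n+1)$-slice in Ullman's sense (it lies in $\Sp^{\Z/2}_{\geq 2n+1}$ as a cofiber of objects there, and in $\Sp^{\Z/2}_{\leq 2n+1}$ via the dual description as a fiber of $P^{2n+1} \to P^{2n}$), Ullman's classification for $\Z/2$ shows it has the form $\Sigma^{n+1+n\sigma}\rH M$ for a Mackey functor $M$ whose restriction $M(\pt) \to M(\Z/2)$ is injective. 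The universal property of $P_{2n+1}^{2n+1}$ as the left adjoint to the inclusion of $(2n+1)$-slices then identifies $M$ with the value, on $\ul{\pi}_0(Z) = \ul{\pi}_{n+1+n\sigma}(X)$, of the left adjoint to the inclusion of restriction-injective Mackey functors into $\Mack_{\Z/2}$. Since $\cP^0$---by killing the kernel of the restriction---is exactly that left adjoint, we conclude $M = \cP^0 \ul{\pi}_{n+1+n\sigma}(X)$.

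The main obstacle is the odd case: one must invoke Ullman's classification of $(2n+1)$-slices for $\Z/2$ and match the universal property of $P_{2n+1}^{2n+1}$ with that of $\cP^0$. A more hands-on alternative would be to compute $\ul{\pi}_0$ and $\ul{\pi}_{-1}$ of $\Sigma^\sigma\tau_{\geq 0}\Sigma^{-\sigma} Z$ via the Euler-class cofiber sequence $\Sigma^\infty(\Z/2)_+ \to \Sphere \to \Sigma^\sigma\Sphere$, expressing both in terms of $\ul{\pi}_0(Z)$ and its transfer, and then read off the cokernel of $\ul{\pi}_0(\Sigma^\sigma\tau_{\geq 0}\Sigma^{-\sigma} Z) \to \ul{\pi}_0(Z)$ as $\cP^0 \ul{\pi}_0(Z)$; this route requires careful bookkeeping with restrictions and transfers but avoids invoking the slice classification directly.
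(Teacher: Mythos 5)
Your proposal is correct in substance, but it takes a different route from the paper, which proves this proposition purely by citation to Hill's slice theorem for $C_2$. Your even case is a complete, self-contained derivation from Proposition \ref{slice.3}: factoring out $\Sigma^{n+n\sigma}$ turns $P_{2n+1}X\to P_{2n}X$ into $\tau_{\geq 1}Y\to\tau_{\geq 0}Y$, whose cofiber is $\rH\ul{\pi}_0(Y)=\rH\ul{\pi}_{n+n\sigma}(X)$; this is cleaner than a citation and uses nothing beyond the paper's own Proposition \ref{slice.3}. For the odd case, your Yoneda argument is sound --- note that $\ul{\pi}_{n+1+n\sigma}(P_{2n+1}X)\cong\ul{\pi}_{n+1+n\sigma}(X)$ by Proposition \ref{slice.3}, that $P^{2n+1}$ preserves $\Sp^{\Z/2}_{\geq 2n+1}$ since the latter is closed under cofibers, and that $\cP^0$ is indeed left adjoint to the inclusion of restriction-injective Mackey functors (the kernel of the restriction is a sub-Mackey functor because the transfer out of the zero group lands in it trivially). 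The one point to flag is that the load-bearing input, the classification of $(2n+1)$-slices of $\Z/2$-spectra as exactly $\Sigma^{n+1+n\sigma}\rH M$ with $M$ restriction-injective, does \emph{not} follow from Proposition \ref{slice.3} the way the even classification does (there $\Sigma^{-n-n\sigma}X$ concentrated in degree $0$ is immediate from Proposition \ref{slice.1}); it is essentially the content of the theorem of Hill/Ullman that the paper cites. So your odd case is not an independent proof but a correct repackaging of that classification via the universal property; what it buys is a transparent explanation of \emph{why} $\cP^0$ appears, namely as the left adjoint matching the two localizations. Your sketched hands-on alternative via the Euler cofiber sequence $\Sigma^\infty(\Z/2)_+\to\Sphere\to\Sigma^\sigma\Sphere$ would make the odd case genuinely self-contained, but as written it is only a sketch.
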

\begin{proof}
We refer to \cite[Theorem 17.5.25]{Hil20}.
\end{proof}

For a $\Z/2$-spectrum $X$,
We use the convenient notation
\begin{equation}
\label{slice.4.1}
\rho_n(X)
:=
\left\{
\begin{array}{ll}
\ul{\pi}_{m+m\sigma} X &\text{if $n=2m$,}
\\
\cP^0\ul{\pi}_{m+1+m\sigma} X & \text{if $n=2m+1$}
\end{array}
\right.
\end{equation}
so that we have $P_{2m}^{2m}  X \simeq \Sigma^{m+m\sigma} \rho_{2m}(X)$ and $P_{2m+1}^{2m+1}X\simeq \Sigma^{m+1+m\sigma}\rho_{2m+1}(X)$.

For an integer $n$,
let $\Sp_{\geq n}$ (resp.\ $\Sp_{\leq n}$) be the full subcategory of $\Sp$ spanned by $(n-1)$-connected (resp.\ $(n+1)$-coconnected) spectra.
Unlike the case of $\Sp^{\Z/2}$,
we have $\Sp_{\geq n}= \Sigma^n \Sp_{\geq 0}$ and $\Sp_{\leq n}= \Sigma^n \Sp_{\leq 0}$.

\begin{prop}
\label{slice.5}
For every integer $n$,
we have natural equivalences $i^*P_n\simeq \tau_{\geq n}i^*$ and $i^*P^n\simeq \tau_{\leq n}i^*$,
and we have inclusions $i^*\Sp_{\geq n}^{\Z/2}\subset \Sp_{\geq n}$ and $i^*\Sp_{\leq n}^{\Z/2}\subset \Sp_{\leq n}$.
\end{prop}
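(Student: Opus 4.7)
The plan is to reduce everything to Proposition \ref{slice.3}, which presents $P_n$ and $P^n$ as conjugates of the truncations for the homotopy $t$-structure, together with the $t$-exactness of $i^*$. The starting observation is that for every $\Z/2$-spectrum $X$ and every integer $n$, evaluating the Mackey functor $\ul{\pi}_n(X)$ at $\Z/2$ recovers $\pi_n(i^*X)$. Consequently, if $X$ is $(n-1)$-connected (resp.\ $(n+1)$-coconnected) as a $\Z/2$-spectrum, then $i^*X$ is $(n-1)$-connected (resp.\ $(n+1)$-coconnected) as a spectrum. This shows $i^*$ is $t$-exact for the homotopy $t$-structures, yielding natural equivalences $i^*\tau_{\geq n}\simeq \tau_{\geq n}i^*$ and $i^*\tau_{\leq n}\simeq \tau_{\leq n}i^*$. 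In addition, because $i^*S^\sigma\simeq S^1$ as non-equivariant spaces, we have $i^*\Sigma^{m+n\sigma}\simeq \Sigma^{m+n}i^*$ for all $m,n\in \Z$.

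With these preliminaries in hand, I would compute directly from Proposition \ref{slice.3}:
\[
i^*P_{2n}\simeq i^*\Sigma^{n+n\sigma}\tau_{\geq 0}\Sigma^{-n-n\sigma}\simeq \Sigma^{2n}\tau_{\geq 0}\Sigma^{-2n}i^*\simeq \tau_{\geq 2n}i^*,
\]
and analogously $i^*P_{2n+1}\simeq \Sigma^{2n}\tau_{\geq 1}\Sigma^{-2n}i^*\simeq \tau_{\geq 2n+1}i^*$. Combining the two parities gives $i^*P_n\simeq \tau_{\geq n}i^*$ as desired. The dual computation with the formulas for $P^{2n-1}$ and $P^{2n}$ in Proposition \ref{slice.3} yields $i^*P^n\simeq \tau_{\leq n}i^*$.

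For the inclusions, I would argue from the adjunction characterizations of $P_n$ and $P^n$. An object $X\in \Sp^{\Z/2}$ belongs to $\Sp^{\Z/2}_{\geq n}$ if and only if the counit $P_nX\to X$ is an equivalence; applying $i^*$ and invoking the identification just established shows that $\tau_{\geq n}i^*X\to i^*X$ is an equivalence, hence $i^*X\in \Sp_{\geq n}$. Dually, $X\in \Sp^{\Z/2}_{\leq n}$ if and only if the unit $X\to P^nX$ is an equivalence, and applying $i^*$ gives $i^*X\in \Sp_{\leq n}$. The only nonformal ingredient is the $t$-exactness of $i^*$ for the homotopy $t$-structures, established at the outset; everything else is formal manipulation with integer suspensions and the two adjunctions defining $P_n$ and $P^n$.
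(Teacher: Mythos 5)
Your proof is correct and follows the same route as the paper: both rest on the $t$-exactness of $i^*$ for the homotopy $t$-structures, the identification $i^*\Sigma^{m+n\sigma}\simeq \Sigma^{m+n}i^*$, and the conjugation formulas of Proposition \ref{slice.3}. The only difference is that you spell out the inclusion statements via the unit/counit characterizations of $\Sp^{\Z/2}_{\geq n}$ and $\Sp^{\Z/2}_{\leq n}$, which the paper leaves implicit; this is a valid and slightly more explicit finish.
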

\begin{proof}
We have natural equivalences $i^*\tau_{\geq n}\simeq \tau_{\geq n} i^*$ and $i^*\Sigma^{n\sigma}\simeq \Sigma^n i^*$.
We finish the proof using Proposition \ref{slice.3}.
\end{proof}

For an abelian group $M$,
let $\ul{M}$ denote the constant Mackey functor associated with $M$.
The following definition is also used in \cite{HP}.

\begin{df}\label{def:even}
A $\Z/2$-spectrum $X$ is \emph{even} if $\rho_{2n+1}(X)=0$ for every integer $n$.
Note that this definition is different from the definition due to Hill--Meier \cite[Definition 3.1]{HM17},
where the evenness was defined as the vanishing of odd Hill--Hopkins--Ravenel slices instead of odd regular slices,
which is equivalent to $\ul{\pi}_{n-1+n\sigma} X=0$ for every integer $n$.

For a Mackey functor $M$,
by \cite[Proposition 3.10]{HP},
$X$ is even if and only if $i^*X\in \Sp$ is even in the sense that $\pi_{2n+1}(i^*X)=0$ for every integer $n$.

Hill and Meier \cite[Definition 3.1]{HM17} also defined strongly even $\Z/2$-spectra.
Here is an equivalent definition:
A $\Z/2$-spectrum $X$ is \emph{strongly even}\footnote{This is called \emph{very even} in \cite{HP}.} if it is even and $\rho_{2n}(X)$ is a constant Mackey functor for every integer $n$.
In this case,
we have a natural isomorphism
\begin{equation}
\label{eq:very even}
\rho_{2n}(X)\simeq \ul{\pi_{2n}(i^* X)}.
\end{equation}
\end{df}

\begin{lem}
\label{THR.2}
Let $M$ be an abelian group.
Then we have $\pi_\sigma^{\Z/2}(\rH \ul{M})= 0$ and $\pi_{-\sigma}^{\Z/2}(\rH \ul{M})\simeq \coker(M\xrightarrow{2} M)$.
\end{lem}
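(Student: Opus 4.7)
The plan is to base both computations on the standard equivariant cofiber sequence of $\Z/2$-spectra
\[
(\Z/2)_+ \xrightarrow{c} \Sphere \to S^\sigma,
\]
which arises as the Thom cofibration $S(\sigma)_+ \to D(\sigma)_+ \to S^\sigma$ together with the equivariant contractibility of the unit disk $D(\sigma)$ in the sign representation. By the defining property of $\rH\ul M$ and the adjunction $i_! \dashv i^*$, the groups $[\Sigma^n \Sphere, \rH\ul M]$ and $[\Sigma^n (\Z/2)_+, \rH\ul M] \cong \pi_n(i^* \rH\ul M)$ equal $M$ for $n = 0$ and vanish otherwise, and the map $c^*\colon M \to M$ is the restriction of the Mackey functor $\ul M$, namely $\id_M$ in the constant case.

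For $\pi_\sigma(\rH\ul M) = [S^\sigma, \rH\ul M]$, mapping the above cofiber sequence into $\rH\ul M$ gives the exact segment
\[
[\Sigma(\Z/2)_+, \rH\ul M] \to [S^\sigma, \rH\ul M] \to [\Sphere, \rH\ul M] \xrightarrow{c^*} [(\Z/2)_+, \rH\ul M],
\]
which reads $0 \to [S^\sigma, \rH\ul M] \to M \xrightarrow{\id_M} M$, forcing $\pi_\sigma(\rH\ul M) = 0$.

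For $\pi_{-\sigma}(\rH\ul M)$, I would smash the cofiber sequence with $S^{-\sigma}$, use the equivalence $(\Z/2)_+ \wedge S^{-\sigma} \simeq \Sigma^{-1}(\Z/2)_+$ (which follows from $(\Z/2)_+ \simeq i_! \Sphere$ together with $i^* S^\sigma \simeq S^1$ via the projection formula), and rotate to obtain the cofiber sequence
\[
S^{-\sigma} \to \Sphere \xrightarrow{c^\vee} (\Z/2)_+,
\]
where $c^\vee$ is the Spanier-Whitehead dual of $c$. Mapping into $\rH\ul M$ and using $[\Sigma^{-1}(\Z/2)_+, \rH\ul M] = 0$ produces
\[
[(\Z/2)_+, \rH\ul M] \xrightarrow{(c^\vee)^*} [\Sphere, \rH\ul M] \to [S^{-\sigma}, \rH\ul M] \to 0,
\]
so $\pi_{-\sigma}(\rH\ul M) \cong \coker((c^\vee)^*)$. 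The map $(c^\vee)^*$ is the transfer $\tr$ of $\ul M$, since the Spanier-Whitehead dual of the collapse $c$ is the norm map and these correspond to restriction and transfer on Mackey-functor homotopy. For a constant Mackey functor, the Mackey identity $\res \circ \tr = 1 + w$ combined with $w = \res = \id_M$ forces $\tr = 2\cdot\id_M$, giving $\pi_{-\sigma}(\rH\ul M) \cong \coker(M \xrightarrow{2} M)$.

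The main subtlety is the identification of $(c^\vee)^*$ as the transfer; this is a standard consequence of the Spanier-Whitehead self-duality of $(\Z/2)_+$, under which the collapse map and the norm map are interchanged. Once this is in hand, both assertions follow from elementary manipulations with the long exact sequence.
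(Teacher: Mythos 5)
Your argument is correct and is essentially the paper's: both proofs rest on the cofiber sequence $\Sigma^\infty(\Z/2)_+\to\Sphere\to\Sigma^\sigma\Sphere$ and its rotation, and both reduce the two computations to the fact that for the constant Mackey functor $\ul{M}$ the restriction is the identity (which kills $\pi_\sigma$) and the transfer is multiplication by $2$ (which gives $\coker(M\xrightarrow{2}M)$ for $\pi_{-\sigma}$). The only difference is cosmetic: the paper smashes the sequence with $\rH\ul{M}$ and identifies the resulting maps as $\rH$ of the diagonal and summation maps of Mackey functors, whereas you map the dualized sequence into $\rH\ul{M}$ and use the self-duality of $(\Z/2)_+$ to identify the precomposition with the dual of the collapse map as the transfer.
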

\begin{proof}
Consider the abelian group $M\oplus M$ with the involution switching the two coordinates,
and consider the associated Mackey functor $\ul{M\oplus M}$.
We have the cofiber sequence of $\Z/2$-spectra $\Sigma^\infty (\Z/2)_+\to \Sphere \to \Sigma^\sigma \Sphere$,
which induces the cofiber sequence
\[
\rH \ul{M\oplus M}\to \rH \ul{M}\xrightarrow{\rH \ul{f}} \Sigma^\sigma \rH{\ul{M}}
\]
such that $f\colon M\oplus M\to M$ is the summation map.
We have the associated long exact sequence
\[
\cdots \to \pi_0^{\Z/2}(\rH \ul{M\oplus M})
\to
\pi_0^{\Z/2}(\rH \ul{M})
\to
\pi_{-\sigma}^{\Z/2} (\rH \ul{M})
\to
\pi_1^{\Z/2}(\rH \ul{M\oplus M})
\to
\cdots.
\]
From this, we obtain $\pi_{-\sigma}^{\Z/2} (\rH \ul{M}) \simeq \coker(M\xrightarrow{2} M)$.

We have the fiber sequence of $\Z/2$-spectra $\Sigma^{-\sigma}\Sphere \to \Sphere \to \Sigma^\infty (\Z/2)_+$,
which induces the fiber sequence
\[
\Sigma^{-\sigma} \rH \ul{M}
\to
\rH \ul{M}
\xrightarrow{\rH \ul{g}}
\rH \ul{M\oplus M}
\]
such that $g\colon M\to M\oplus M$ is the diagonal map.
We have the associated long exact sequence
\[
\cdots
\to
\pi_1^{\Z/2}(\rH \ul{M\oplus M})
\to
\pi_\sigma^{\Z/2} (\rH \ul{M})
\to
\pi_0^{\Z/2}(\rH \ul{M})
\to
\pi_0^{\Z/2}(\rH \ul{M\oplus M}).
\]
From this,
we obtain $\pi_\sigma^{ \Z/2}(\rH \ul{M})=0$.
\end{proof}

\begin{lem}
\label{THR.3}
Let $X$ be a strongly even $\Z/2$-spectrum.
Then for every integer $n$,
we have $\ul{\pi}_{n-1+n\sigma}(X)=0$ and
\(
\pi_{n+1+n\sigma}^{\Z/2}(X)\simeq \coker(\pi_{2n+2}(i^*X)\xrightarrow{2} \pi_{2n+2}(i^*X)).
\)
\end{lem}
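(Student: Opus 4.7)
The plan is to use the slice filtration. Since $X$ is very even, the definition of very even together with Proposition~\ref{slice.4} yields $P_{2k+1}^{2k+1}X \simeq 0$ and $P_{2k}^{2k}X \simeq \Sigma^{k+k\sigma}\rH\ul{M_k}$, where $M_k := \pi_{2k}(i^*X)$. Hence the computation of $\pi_{n-1+n\sigma}(X)$ and $\pi_{n+1+n\sigma}(X)$ reduces to knowing the equivariant homotopy groups $\pi_{c+d\sigma}(\rH\ul{M})$ of the Eilenberg--MacLane spectrum of a constant Mackey functor in certain bidegrees.

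The key computational input is the following: for every abelian group $M$ and every integer $c$, we have $\pi_{c+(c+1)\sigma}(\rH\ul{M}) = 0$, and $\pi_{(c+1)+c\sigma}(\rH\ul{M})$ vanishes unless $c = -1$, in which case it equals $\coker(2\colon M\to M)$. I would prove these by iterated application of the two cofiber sequences
\[
\rH\ul{M\oplus M}\to \rH\ul{M}\to \Sigma^\sigma\rH\ul{M} \quad\text{and}\quad \Sigma^{-\sigma}\rH\ul{M}\to \rH\ul{M}\to \rH\ul{M\oplus M}
\]
already used in the proof of Lemma~\ref{THR.2}, starting from the base identifications $\pi_0(\rH\ul{M}) = M$, $\pi_\sigma(\rH\ul{M}) = 0$, and $\pi_{-\sigma}(\rH\ul{M}) = \coker(2)$. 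As an intermediate step one shows $\pi_{c+c\sigma}(\rH\ul{M}) = 0$ for $c\neq 0$, by iterating either cofiber sequence from bidegree $(c,c)$ to $(c,0)$; this is possible because $\pi_j(\rH M) = 0$ for $j \neq 0$ makes all connecting maps vanish outside a single seam.

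Feeding this into the slice tower of $X$, the $k$-th slice contributes $\pi_{c+(c+1)\sigma}(\rH\ul{M_k}) = 0$ with $c = n-1-k$ to $\pi_{n-1+n\sigma}(X)$ for every $k$, so $\pi_{n-1+n\sigma}(X) = 0$. The contribution to $\pi_{n+1+n\sigma}(X)$ is $\pi_{(c+1)+c\sigma}(\rH\ul{M_k})$ with $c = n-k$, which is zero for all $k$ except $k = n+1$, where it equals $\coker(2\colon \pi_{2n+2}(i^*X)\to \pi_{2n+2}(i^*X))$. Since at most one slice contributes nontrivially there are no extension problems, and the slice tower identifies $\pi_{n+1+n\sigma}(X)$ with this cokernel. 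Finally, the Mackey functor vanishing $\ul{\pi}_{n-1+n\sigma}(X) = 0$ follows from $\pi_{n-1+n\sigma}(X) = 0$ together with $i^*\ul{\pi}_{n-1+n\sigma}(X) = \pi_{2n-1}(i^*X) = 0$, the latter being immediate because $X$ is even.

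The main obstacle is the bookkeeping of the two-variable iteration through the cofiber sequences: the trick is to route each iteration so that the connecting maps land in $\pi_j(\rH M)$ with $j$ avoiding the seam $\{0,1\}$, except at the few places where Lemma~\ref{THR.2} supplies the exact base-case information needed to close the loop. Once this $\RO(\Z/2)$-graded computation is in hand, the slice-theoretic assembly is essentially formal, since only finitely many slice degrees contribute to any single target homotopy group here.
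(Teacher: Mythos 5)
Your proposal is correct in outline and rests on the same two pillars as the paper's proof: the slice tower of $X$, whose even slices are $\Sigma^{k+k\sigma}\rH\ul{M_k}$ with $M_k=\pi_{2k}(i^*X)$ and whose odd slices vanish, together with the $\mathrm{RO}(\Z/2)$-graded homotopy of $\rH\ul{M}$ in bidegrees adjacent to the diagonal. Your computations of $\pi_{c+(c+1)\sigma}(\rH\ul{M})$, $\pi_{(c+1)+c\sigma}(\rH\ul{M})$ and of the intermediate diagonal groups $\pi_{c+c\sigma}(\rH\ul{M})$ are all correct and can indeed be obtained by running the two cofiber sequences from the base cases of Lemma \ref{THR.2}. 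The paper is more economical at the reduction step: instead of computing $\pi_V$ of every slice and assembling over the whole tower, it uses Proposition \ref{slice.2} to place $\Sigma^{-n-1-n\sigma}P_{2n+4}X$ in $\Sigma^{1}\Sp_{\geq 0}^{\Z/2}$ and $\Sigma^{-n-1-n\sigma}P^{2n}X$ in $\Sigma^{-1}\Sp_{\leq 0}^{\Z/2}$, which kills $\ul{\pi}_{V}$ and $\ul{\pi}_{V\pm 1}$ of those two truncations simultaneously and isolates the single slice $P_{2n+2}^{2n+2}X$ in two steps, so that only $\pi_{\pm\sigma}(\rH\ul{M})$ from Lemma \ref{THR.2} is ever needed.

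The one step of yours that is asserted too quickly is ``since at most one slice contributes nontrivially there are no extension problems.'' Vanishing of $\pi_V$ on all but one slice does not by itself identify $\pi_V(X)$ with that one group: in the long exact sequences of the tower the boundary maps land in $\pi_{V-1}$ of the higher slices and receive from $\pi_{V+1}$ of the lower truncations, so these groups must also be controlled. For $V=n+1+n\sigma$ this means checking in addition that $\pi_{n+n\sigma}$ vanishes on the slices in filtration $>2n+2$ (your diagonal computation does supply this, since the only diagonal contribution sits at slice $2n$, below the relevant range) and that $\pi_{n+2+n\sigma}(P^{2n}X)=0$, which requires the further vanishing $\pi_{(c+2)+c\sigma}(\rH\ul{M})=0$ for $c\geq 0$ --- true by connectivity of $S^{(c+2)+c\sigma}$, but not among the bidegrees you listed. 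None of this is difficult, and convergence is a finite problem by the (co)connectivity estimates of Proposition \ref{slice.2}, but the assembly needs these extra vanishing statements (or the paper's shortcut) to be complete.
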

\begin{proof}
We have $\Sigma^{-n+1-n\sigma}P_{2n}X\in \Sigma^1 \Sp_{\geq 0}^{\Z/2}$ and $\Sigma^{-n+1-n\sigma}P^{2n-4}X\in \Sigma^{-1}\Sp_{\leq 0}^{\Z/2}$ using Proposition \ref{slice.2}.
Since $X$ is even,
we have $P_{2n-1}^{2n-1}X\simeq P_{2n-3}^{2n-3}X\simeq 0$ in $\Sp^{\Z/2}$.
Hence $\ul{\pi}_{n-1+n\sigma}(P_{2n-1}X)$ and $\ul{\pi}_{n-1+n\sigma}(P^{2n-3}X)$ vanish,
so we have isomorphisms
\[
\ul{\pi}_{n-1+n\sigma}(X)
\simeq
\ul{\pi}_{n-1+n\sigma}(P_{2n-2}^{2n-2}X)
\simeq
\ul{\pi}_{0}(\Sigma^{-\sigma}\rho_{2n-2}(X)).
\]
Since $\rho_{2n-2}(X)$ is a constant Mackey functor,
we have $\ul{\pi}_{0}(\Sigma^{-\sigma}\rho_{2n-2}(X))\simeq 0$ by Lemma \ref{THR.2}.

We have $\Sigma^{-n-1-n\sigma}P_{2n+4}X\in \Sigma^1 \Sp_{\geq 0}^{\Z/2}$ and $\Sigma^{-n-1-n\sigma}P^{2n}X\in \Sigma^{-1}\Sp_{\leq 0}^{\Z/2}$
using Proposition \ref{slice.2}.
Since $X$ is even,
we have $P_{2n+1}^{2n+1}X\simeq P_{2n+3}^{2n+3}X\simeq 0$ in $\Sp^{\Z/2}$.
Hence we have isomorphisms
\[
\pi_{n+1+n\sigma}^{\Z/2}(X)
\simeq
\pi_{n+1+n\sigma}^{\Z/2}(P_{2n+2}^{2n+2}X)
\simeq
\pi_{0}^{\Z/2}(\Sigma^\sigma\rho_{2n+2}(X)).
\]
Since $\rho_{2n+2}(X)$ is isomorphic to the constant Mackey functor $\ul{\pi_{2n+2}(i^*X)}$,
Lemma \ref{THR.2} finishes the proof.
\end{proof}

A morphism $M\to M'$ of Mackey functors is a monomorphism (resp.\ epimorphism) if $M(\Z/2)\to M'(\Z/2)$ and $M(\pt)\to M'(\pt)$ are monomorphisms (resp.\ epimorphisms).

\begin{lem}
\label{THR.4}
Let $X\to Y\to Z$ be a fiber sequence in $\Sp^{\Z/2}$.
If $X$ and $Z$ are strongly even,
then $Y$ is strongly even,
and $\rho_{2n}(Y)\to \rho_{2n}(Z)$ is an epimorphism of Mackey functors for every integer $n$.
\end{lem}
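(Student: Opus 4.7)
The plan is to analyze the fiber sequence through the long exact sequences of Mackey-functor equivariant homotopy groups at virtual gradings $V = n + n\sigma$, and to extract the constant structure of $\rho_{2n}(Y)$ from the Mackey axioms.

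First I would establish that $Y$ is even. Applying $i^*$ produces a fiber sequence of underlying spectra whose long exact sequence, combined with the vanishing of odd homotopy groups of $i^*X$ and $i^*Z$, forces $\pi_{2n+1}(i^*Y) = 0$ for every integer $n$, so $Y$ is even; moreover, it breaks into short exact sequences
\[
0 \to \pi_{2n}(i^*X) \to \pi_{2n}(i^*Y) \to \pi_{2n}(i^*Z) \to 0.
\]

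Next I would examine the Mackey-functor long exact sequence
\[
\ul\pi_{n+1+n\sigma}(Z) \to \ul\pi_{n+n\sigma}(X) \to \ul\pi_{n+n\sigma}(Y) \to \ul\pi_{n+n\sigma}(Z) \to \ul\pi_{n-1+n\sigma}(X).
\]
The rightmost term vanishes by Lemma \ref{THR.3}, which gives immediately the claimed epimorphism $\rho_{2n}(Y) \to \rho_{2n}(Z)$. For the left connecting map, its source $\ul\pi_{n+1+n\sigma}(Z)$ has $(\Z/2)$-value $\pi_{2n+1}(i^*Z) = 0$ (since $Z$ is even), while its target is the constant Mackey functor $\ul{\pi_{2n}(i^*X)}$. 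Because the restriction of a constant Mackey functor is the identity, any Mackey map from a functor with zero $(\Z/2)$-value to a constant one must vanish on $(\pt)$ as well, by $\res$-compatibility. Hence we obtain a short exact sequence of Mackey functors
\[
0 \to \ul{\pi_{2n}(i^*X)} \to \rho_{2n}(Y) \to \ul{\pi_{2n}(i^*Z)} \to 0.
\]

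The main step — essentially bookkeeping with the Mackey axioms — is to show that this extension of constants is itself constant. Applying the five lemma to the evaluations at $(\pt)$ and $(\Z/2)$, both fitting into the displayed short exact sequence with identity maps on sub and quotient, shows that the restriction map $\res : \rho_{2n}(Y)(\pt) \to \pi_{2n}(i^*Y)$ is an isomorphism. Then the axiom $w \circ \res = \res$ combined with the surjectivity of $\res$ forces $w = \id$ on $\pi_{2n}(i^*Y)$, and subsequently $\res \circ \tr = 1 + w = 2$ together with the invertibility of $\res$ forces $\tr = 2$. Hence $\rho_{2n}(Y)$ acquires the structure of the constant Mackey functor $\ul{\pi_{2n}(i^*Y)}$, establishing very-even-ness of $Y$.
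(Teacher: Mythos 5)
Your proof is correct and follows essentially the same route as the paper: both reduce to showing that the restriction map of $\rho_{2n}(Y)$ is an isomorphism via the long exact sequence in $\ul{\pi}_{*+n\sigma}$, using Lemma \ref{THR.3} for the vanishing of $\ul{\pi}_{n-1+n\sigma}(X)$ and evenness of $Z$ for the other end, and then conclude constancy from the Mackey axioms. The only cosmetic difference is that you first truncate to a short exact sequence of Mackey functors and apply the five lemma, whereas the paper runs the snake lemma directly on the five-term diagram of restriction maps.
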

\begin{proof}
Let $n$ be an integer.
We have $\pi_{2n+1}(i^*X)=\pi_{2n+1}(i^*Z)=0$ since $X$ and $Z$ are even,
so we have $\pi_{2n+1}(i^*Y)=0$ using the induced exact sequence $\pi_{2n+1}(i^*X)\to \pi_{2n+1}(i^*Y)\to \pi_{2n+1}(i^*Z)$.
This implies that $Y$ is even.

We have the induced commutative diagram with exact rows
\[
\begin{tikzcd}[column sep=tiny]
\pi_{n+1+n\sigma}^{\Z/2}(Z)\ar[r]\ar[d]&
\pi_{n+n\sigma}^{\Z/2}(X)\ar[r]\ar[d,"a"]&
\pi_{n+n\sigma}^{\Z/2}(Y)\ar[r]\ar[d,"b"]&
\pi_{n+n\sigma}^{\Z/2}(Z)\ar[r]\ar[d,"c"]&
\pi_{n-1+n\sigma}^{\Z/2}(X)\ar[d]
\\
\pi_{2n+1}(i^*Z)\ar[r]&
\pi_{2n}(i^*X)\ar[r]&
\pi_{2n}(i^*Y)\ar[r]&
\pi_{2n}(i^*Z)\ar[r]&
\pi_{2n-1}(i^*X),
\end{tikzcd}
\]
where the vertical arrows are the restriction maps of the Mackey functors $\ul{\pi}_{n+i+n\sigma}(-)$ with $i=-1,0,1$.
Since $X$ and $Z$ are strongly even,
$a$ and $c$ are isomorphisms,
and we have $\pi_{2n+1}(i^*Z)=0$.
By Lemma \ref{THR.3},
we also have $\pi_{n-1+n\sigma}^{\Z/2}(X)=0$.
Therefore by the snake lemma, $b$ is an isomorphism as well, and thus $\rho_{2n}(Y)$ is a constant Mackey functor.
Since $X$ is even,
we have $\pi_{2n-1}(i^*X)=0$.
Hence $\rho_{2n}(Y)\to \rho_{2n}(Z)$ is an epimorphism.
\end{proof}

We keep using the convention of a filtration in \cite[Definition 4.18]{HP}:
A \emph{filtered $\Z/2$-spectrum} is an infinite sequence
\[
\cdots \to \Fil_n X\to \Fil_{n-1} X\to \cdots
\]
of $\Z/2$-spectra.
It is \emph{complete} if $\lim_n \Fil_n X\simeq 0$,
and its \emph{underlying $\Z/2$-spectrum} is $\colim_n \Fil_n X$.
For every integer $n$,
the \emph{$n$th graded piece} is
\[
\gr^nX:=\cofib(\Fil_{n+1} X\to \Fil_{n} X).
\]
The \emph{$\infty$-category of filtered $\Z/2$-spectrum} is $\Fun(\Z^{op},\Sp^{\Z/2})$,
where $\Z^{op}$ denotes the category whose objects are the integers and whose morphism set $\Hom_{\Z^{op}}(m,n)$ has exactly one element (resp.\ no elements) if $m\geq n$ (resp.\ $m<n$).

\begin{lem}
\label{THR.5}
Let $\cdots \to \Fil_1X\to \Fil_0X=X$ be a complete filtration on $X\in \Sp^{\Z/2}$.
If the graded piece $\gr^n X$ is strongly even for every integer $n$,
then $X$ is strongly even.
\end{lem}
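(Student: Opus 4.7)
The plan is to introduce the cofibers $Y_n := \cofib(\Fil_n X \to X)$, show by induction that each $Y_n$ is very even, and then transfer this property to $X$ via the equivalence $X \simeq \lim_n Y_n$ coming from completeness. Applying the octahedral axiom to the composition $\Fil_n X \to \Fil_{n-1} X \to X$ yields, for each $n \geq 1$, a fiber sequence
\[
\gr^{n-1} X \to Y_n \to Y_{n-1},
\]
with base case $Y_0 \simeq 0$. The identification $X \simeq \lim_n Y_n$ follows from completeness, together with the fiber sequences $\Fil_n X \to X \to Y_n$ and the fact that $\lim$ preserves fiber sequences.

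By induction on $n$, Lemma \ref{THR.4} applied to the above fiber sequence immediately gives that $Y_n$ is very even and that $\rho_{2k}(Y_n) \to \rho_{2k}(Y_{n-1})$ is an epimorphism of Mackey functors for every integer $k$. Evaluating at both objects of $\cO_{\Z/2}^{op}$, this means that the transition maps $\pi_{2k}(i^* Y_n) \to \pi_{2k}(i^* Y_{n-1})$ and $\pi_{k+k\sigma}(Y_n) \to \pi_{k+k\sigma}(Y_{n-1})$ are surjective.

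To pass to the limit, I would apply the Milnor $\lim^1$ sequence in $\Sp^{\Z/2}$ separately to the underlying spectrum and to each twisted homotopy group. Evenness of $X$ is direct: $\lim_n \pi_{2k+1}(i^*Y_n) = 0$ by evenness of each $Y_n$, and $\lim\nolimits^1 \pi_{2k+2}(i^*Y_n) = 0$ by Mittag-Leffler on the surjective tower above. For the constant Mackey functor condition, the nontrivial input is that $\lim\nolimits^1 \pi_{k+1+k\sigma}(Y_n) = 0$; by Lemma \ref{THR.3} this group equals $\coker(\cdot 2 \colon \pi_{2k+2}(i^*Y_n) \to \pi_{2k+2}(i^*Y_n))$, so right-exactness of $\coker$ propagates surjectivity of the transitions, and Mittag-Leffler applies again. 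The resulting Milnor identifications
\[
\pi_{k+k\sigma}(X) \simeq \lim_n \pi_{k+k\sigma}(Y_n), \qquad \pi_{2k}(i^*X) \simeq \lim_n \pi_{2k}(i^*Y_n)
\]
are compatible with the restriction map, so the restriction of $\rho_{2k}(X)$ is the limit of isomorphisms and is therefore itself an isomorphism. Once restriction is an isomorphism, the Mackey axioms $w \circ \res = \res$ and $\res \circ \tr = 1 + w$ automatically force trivial Weyl action and transfer equal to multiplication by $2$, so $\rho_{2k}(X)$ is constant.

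The main obstacle is controlling $\lim^1$ for the twisted homotopy groups $\pi_{k+1+k\sigma}(Y_n)$: this does not follow from any statement about underlying spectra, and Lemma \ref{THR.3} is essential for reducing the required Mittag-Leffler condition to one on ordinary homotopy groups.
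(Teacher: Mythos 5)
Your proposal is correct and follows essentially the same route as the paper: your $Y_n$ is the paper's $\Fil^n X$, the induction via Lemma \ref{THR.4} on the cofiber sequences $\gr^{n-1}X\to Y_n\to Y_{n-1}$ is identical, and the passage to the limit via the Milnor sequence, with Lemma \ref{THR.3} used to convert surjectivity of $\rho_{2k}$-transitions into the Mittag--Leffler condition for the twisted groups $\pi_{k+1+k\sigma}(Y_n)$, is exactly the paper's argument. Your closing remark correctly identifies the one genuinely delicate point (controlling $\lim^1$ of the twisted homotopy groups), which is also where the paper leans on Lemma \ref{THR.3}.
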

\begin{proof}
We set $\Fil^nX:=\cofib(\Fil_nX\to X)$.
Apply Lemma \ref{THR.4} to the fiber sequence $\gr^{n+1}X\to \Fil^{n+1}X\to \Fil^nX$,
and use induction to see that $\Fil^{n}X$ is strongly even and $\rho_{2i}\Fil^{n+1} X\to \rho_{2i}\Fil^{n} X$ is an epimorphism for all integers $i$ and $n$.

The filtration $\Fil_\bullet X$ is complete,
so we have an equivalence of $\Z/2$-spectra $\lim_n \Fil^n X\simeq X$.
Since $\rho_{2i+2}(\Fil^{n+1}X)\to \rho_{2i+2}(\Fil^n X)$ is an epimorphism,
the induced map
\begin{align*}
&\coker(\pi_{2i+2}(i^* \Fil^{n+1} X)\xrightarrow{2} \pi_{2i+2}(i^* \Fil^{n+1} X))
\\
\to &
\coker(\pi_{2i+2}(i^* \Fil^n X)\xrightarrow{2} \pi_{2i+2}(i^* \Fil^n X))
\end{align*}
is an epimorphism too.
Together with Lemma \ref{THR.3},
we see that $\pi_{i+1+i\sigma}^{\Z/2}(\Fil^{n+1}X)\to \pi_{i+1+i\sigma}^{\Z/2}(\Fil^n X)$ is an epimorphism.
On the other hand,
we have
$\pi_{2i+1}(i^*\Fil^n X)=0$ since $\Fil^n X$ is even,
so $\pi_{2i+1}(i^*\Fil^{n+1}X)\to \pi_{2i+1}(i^*\Fil^n X)$ is an epimorphism.
It follows that $\ul{\pi}_{i+1+i\sigma}(\Fil^{n+1}X)\to \ul{\pi}_{i+1+i\sigma}(\Fil^n X)$ is an epimorphism.
In particular, the tower $\ul{\pi}_{i+1+i\sigma}(\Fil^\bullet X)$ satisfies the Mittag-Leffler condition, so we have
\[
\mathop{\mathrm{lim}^1}_n \ul{\pi}_{i+1+i\sigma} (\Fil^n X)
\simeq
0.
\]
Together with the Milnor exact sequence
\[
0
\to
\mathop{\mathrm{lim}^1}_n \ul{\pi}_{i+1+i\sigma} (\Fil^n X)
\to
\ul{\pi}_{i+i\sigma} (X)
\to
\lim_n \ul{\pi}_{i+i\sigma} (\Fil^n X)
\to
0,
\]
we have an isomorphism $\ul{\pi}_{i+i\sigma} (X)\simeq \lim_n \ul{\pi}_{i+i\sigma} (\Fil^n X)$.
This implies that $\ul{\pi}_{i+i\sigma} (X)$ is a constant Mackey functor.

Since $\rho_{2i+2}(\Fil^{n+1}X)\to \rho_{2i+2}(\Fil^n X)$ is an epimorphism,
$\pi_{2i+2}(i^*\Fil^{n+1}X)\to \pi_{2i+2}(i^*\Fil^n X)$ is an epimorphism.
Together with the Milnor exact sequence
\[
0
\to
\mathop{\mathrm{lim}^1}_n \pi_{2i+2} (i^*\Fil^n X)
\to
\pi_{2i+1} (i^*X)
\to
\lim_n \pi_{2i+1} (i^*\Fil^n X)
\to
0,
\]
we have an isomorphism $\pi_{2i+1} (i^*X)\simeq \lim_n \pi_{2i+1} (i^*\Fil^n X)$.
Hence $\pi_{2i+1}(i^*X)$ vanishes,
so $X$ is strongly even.
\end{proof}

\section{Completeness of the real Hochschild--Kostant--Rosenberg filtration}

The purpose of this section is to show that the natural filtration on real Hochschild homology in \cite[Theorem 4.32]{HP} is complete under a certain condition, see Theorem \ref{geometric.6}.

\begin{df}
Let $R\to A$ be a map normed $\Z/2$-spectra.
The \emph{Hochschild homology of $A$ over $R$} is
\[
\HR(A/R)
:=
\THR(A)\wedge_{\THR(R)}R
\in
(\NAlg^{\Z/2})^{B S^\sigma}.
\]
This imposes a natural $S^\sigma$-action on the Hochschild homology in \cite[Definition 4.1]{HP}.
After forgetting the normed structure on $\HR(A/R)$,
we set
\[
\HR(A/R;\Z_p):=\HR(A/R)_p^\wedge
\in
(\Sp^{\Z/2})^{BS^\sigma}.
\]
\end{df}

\begin{df}
If $R\to A$ is a map of commutative rings,
then we set
\[
\HR(A/R)
:=
\HR(\rH \ul{A}/\rH \ul{R}),
\;
\HR(A/R;\Z_p)
:=
\HR(\rH \ul{A}/\rH \ul{R};\Z_p).
\]
\end{df}

\begin{prop}
\label{geometric.1}
Let $A$ be a commutative ring.
Then we have a natural isomorphism
\[
\pi_i \Phi^{\Z/2}\rH \ul{A}
\cong
\left\{
\begin{array}{ll}
A/2 & \text{if $i$ is even and $i\geq 0$},\\
\ker(A\xrightarrow{2} A) & \text{if $i$ is odd and $i\geq 0$},
\\
0 &\text{if $i<0$}.
\end{array}
\right.
\]
\end{prop}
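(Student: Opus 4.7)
The plan is to apply categorical $\Z/2$-fixed points to the isotropy separation cofiber sequence
\[
E\Z/2_+\wedge \rH\ul{A}\longrightarrow \rH\ul{A}\longrightarrow \tilde{E}\Z/2\wedge \rH\ul{A}
\]
in $\Sp^{\Z/2}$, which produces the cofiber sequence of spectra
\[
(E\Z/2_+\wedge \rH\ul{A})^{\Z/2}\xrightarrow{\ \nu\ }(\rH\ul{A})^{\Z/2}\longrightarrow \Phi^{\Z/2}\rH\ul{A}.
\]
Since $\ul{A}$ is the constant Mackey functor, the middle term is $\rH A$ and the underlying $\Z/2$-action on $i^*\rH\ul{A}$ is trivial. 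Filtering $E\Z/2=\operatorname{colim}_n S(n\sigma)$ by unit spheres of multiples of the sign representation and applying the Wirthm\"uller/Adams isomorphism cellwise identifies the source as $\rH A\wedge B\Z/2_+$, whose homotopy is the group homology $H_*(\Z/2;A)$.

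Next I would pin down $\nu$ on $\pi_0$ by tracking the bottom cell $(\Z/2)_+\hookrightarrow E\Z/2_+$: the composite $(\Z/2)_+\wedge \rH\ul{A}\to E\Z/2_+\wedge \rH\ul{A}\to \rH\ul{A}$ is the counit $i_!i^*\rH\ul{A}\to \rH\ul{A}$, and taking $(-)^{\Z/2}$ and $\pi_0$ recovers the transfer of the Mackey functor $\ul{A}$, which is $1+w=2$. Since the inclusion of the bottom cell is the identity on $\pi_0$ after fixed points, $\nu$ acts on $\pi_0=A$ as multiplication by $2$; in higher degrees the target $\pi_*\rH A$ vanishes, so $\nu$ is automatically zero there.

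Plugging the standard computations $H_0(\Z/2;A)=A$, $H_{2m-1}(\Z/2;A)=A/2$ and $H_{2m}(\Z/2;A)=\ker(A\xrightarrow{2}A)$ for $m\ge 1$ into the long exact sequence of $\nu$ then yields the proposition: for $n=0$ the cokernel of multiplication by $2$ gives $A/2$; for $n=1$ the connecting map produces $\ker(A\xrightarrow{2}A)$; for $n\ge 2$ both adjacent copies of $\pi_*\rH A$ vanish and one reads off $H_{n-1}(\Z/2;A)$, which is $A/2$ for $n$ even and $\ker(A\xrightarrow{2}A)$ for $n$ odd; and for $n<0$ every relevant term is zero.

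The main technical input is the identification $(E\Z/2_+\wedge \rH\ul{A})^{\Z/2}\simeq \rH A\wedge B\Z/2_+$ together with the formula $\nu|_{\pi_0}=2$; both are standard consequences of the behaviour of $(-)^{\Z/2}$ on free $\Z/2$-spectra applied to the constant Mackey functor, but they carry the real substance of the argument.
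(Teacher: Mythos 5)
Your argument is correct and is essentially the paper's own proof: both use the isotropy separation sequence $(\rH\ul{A})_{h\Z/2}\to(\rH\ul{A})^{\Z/2}\to\Phi^{\Z/2}\rH\ul{A}$, identify the first map on $\pi_0$ with the transfer $\tr=2$ of the constant Mackey functor, and feed the standard computation of $H_*(\Z/2;A)$ (equivalently $H_*(\R\P^\infty;A)$ via universal coefficients) into the long exact sequence. The only cosmetic difference is that you phrase the homotopy-orbit term as $(E\Z/2_+\wedge\rH\ul{A})^{\Z/2}\simeq \rH A\wedge B\Z/2_+$ via the Adams isomorphism, which the paper leaves implicit.
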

\begin{proof}
We have the isotropy separation cofiber sequence
\[
(\rH \ul{A})_{h\Z/2}
\to
(\rH \ul{A})^{\Z/2}
\to
\Phi^{\Z/2} \rH \ul{A}.
\]
The induced map $\pi_0 (\rH \ul{A})_{h\Z/2}\to \pi_0 (\rH \ul{A})^{\Z/2}$ is the multiplication by $2$ since this is equal to the transfer map for the constant Mackey functor $\ul{A}$.
It follows that we have $\pi_0\Phi^{\Z/2}\rH \ul{A}\cong A/2$ and $\pi_i\Phi^{\Z/2}\rH \ul{A}\cong \pi_{i-1}(\rH \ul{A})_{h\Z/2}$ for $i>0$.
We have natural isomorphisms
\[
\pi_i (\rH \ul{A})_{h\Z/2}
\cong
H_i (B (\Z/2),A)
\cong
H_i(\R \P^\infty, A)
\]
for $i\geq 0$.
Hence it suffices to compute the last one, which can be done using the universal coefficient theorem and the well-known computation of $H_i(\R\P^\infty,\Z)$.
\end{proof}

\begin{prop}
\label{geometric.2}
Let $R\to A$ be a map of normed $\Z/2$-spectra.
Then there is a natural equivalence of $\E_\infty$-rings
\[
\Phi^{\Z/2}\HR(A/R)
\simeq
\Phi^{\Z/2}A\wedge_{i^*A\wedge_{i^*R} \Phi^{\Z/2}}\Phi^{\Z/2}A.
\]
\end{prop}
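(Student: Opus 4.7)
\emph{Proof proposal.} The strategy is to apply the geometric fixed point functor $\Phi^{\Z/2} = p_\otimes$ to the defining formula $\HR(A/R) = \THR(A) \wedge_{\THR(R)} R$ and reduce the statement to a routine manipulation of pushouts of $\E_\infty$-rings. Since $\Phi^{\Z/2}$ is symmetric monoidal (as a normed pushforward) and preserves colimits, it commutes with relative tensor products, giving
\[
\Phi^{\Z/2}\HR(A/R) \simeq \Phi^{\Z/2}\THR(A) \wedge_{\Phi^{\Z/2}\THR(R)} \Phi^{\Z/2}R.
\]

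The first key step is to compute $\Phi^{\Z/2}\THR(-)$ from the bar-style description $\THR(A)\simeq A\otimes_{N^{\Z/2}i^*A} A$ recalled after the definition of $\THR$ (from \cite[Remark 5.3]{QS22}). The identity $\Phi^{\Z/2}N^{\Z/2} = p_\otimes i_\otimes = (pi)_\otimes = \mathrm{id}$ noted in Section 2, combined with symmetric monoidality, yields
\[
\Phi^{\Z/2}\THR(A) \simeq \Phi^{\Z/2}A \wedge_{i^*A} \Phi^{\Z/2}A,
\]
and analogously for $R$. Substituting back gives
\[
\Phi^{\Z/2}\HR(A/R) \simeq (\Phi^{\Z/2}A \wedge_{i^*A} \Phi^{\Z/2}A) \wedge_{\Phi^{\Z/2}R \wedge_{i^*R} \Phi^{\Z/2}R} \Phi^{\Z/2}R.
\]

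The remaining step is a formal rewriting of iterated pushouts in $\CAlg$. I would present both this expression and the desired target $\Phi^{\Z/2}A \wedge_{i^*A \wedge_{i^*R} \Phi^{\Z/2}R} \Phi^{\Z/2}A$ as the colimit of one and the same five-vertex diagram in $\CAlg$, with vertices two copies of $\Phi^{\Z/2}A$, one of $i^*A$, one of $\Phi^{\Z/2}R$, and one of $i^*R$, and structural arrows $i^*R\to i^*A$, $i^*R\to \Phi^{\Z/2}R$, the norm maps $i^*A\to \Phi^{\Z/2}A$ into each copy, and $\Phi^{\Z/2}R\to \Phi^{\Z/2}A$ into each copy. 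Contracting the diagram by first amalgamating the two $\Phi^{\Z/2}A$'s over $i^*A$ and the two $\Phi^{\Z/2}R$'s over $i^*R$ returns the displayed expression; contracting instead by first forming $i^*A\wedge_{i^*R}\Phi^{\Z/2}R$ and then amalgamating the two $\Phi^{\Z/2}A$'s over it returns the target.

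The only substantive point I expect to have to pin down is the compatibility of $\Phi^{\Z/2}$ with the various relative tensor products of normed $\Z/2$-spectra; this is where the identification $\Phi^{\Z/2} = p_\otimes$ inside the $\Span(\FinGpd)$-construction of $\SH$ recalled in Section 2 is essential, as it supplies both the symmetric monoidality and the colimit preservation, and thereby also the naturality of the equivalence in the map $R\to A$.
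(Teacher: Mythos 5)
Your proposal is correct and takes essentially the same route as the paper: both arguments rest on $\Phi^{\Z/2}$ being symmetric monoidal and colimit-preserving, on $\Phi^{\Z/2}N^{\Z/2}\simeq \id$, and on the formal pushout rewriting $(A\wedge_{N^{\Z/2}i^*A}A)\wedge_{R\wedge_{N^{\Z/2}i^*R}R}R\simeq A\wedge_{N^{\Z/2}i^*A\wedge_{N^{\Z/2}i^*R}R}A$. The only (immaterial) difference is that the paper performs this rearrangement at the level of normed $\Z/2$-spectra and then applies $\Phi^{\Z/2}$ once, whereas you apply $\Phi^{\Z/2}$ first and rearrange afterwards.
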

\begin{proof}
We have
\begin{align*}
\HR(A/R)
= &
\THR(A)\wedge_{\THR(R)} R
\\
= &
(A\wedge_{N_e^{\Z/2}i^*A} A)\wedge_{R\wedge_{N_e^{\Z/2}i^*A}R} R
\\
\simeq &
A\wedge_{N_e^{\Z/2}i^*R\wedge_{N_e^{\Z/2}i^*R}R} A.
\end{align*}
Apply $\Phi^{\Z/2}$ to this,
and use $\Phi^{\Z/2}N_e^{\Z/2}\simeq \id$ and the fact that $\Phi^{\Z/2}$ is symmetric monoidal (see e.g.\ \cite[Remark 9.10]{BH21}) to conclude.
\end{proof}

\begin{prop}
\label{geometric.3}
Let $A$ be a commutative ring.
Then $\pi_0$ of the induced composite map
\[
\rH A
\xrightarrow{\simeq}
i^*\rH \ul{A}
\xrightarrow{\simeq}
\Phi^{\Z/2}
N_e^{\Z/2}i^*
\rH \ul{A}
\to
\Phi^{\Z/2}
\rH \ul{A}
\]
is the map $A\to A/2$ given by $a\mapsto a^2$ for $a\in A$.
\end{prop}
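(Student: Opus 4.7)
The plan is to first recognize the composite as a map of $\E_\infty$-ring spectra, so that its $\pi_0$ is a ring homomorphism $A\to A/2$, then to reduce to a universal case by naturality, and finally to perform an explicit computation by unwinding the equivalences and the counit at the level of $\pi_0$.

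First, each arrow in the composite is a map of $\E_\infty$-ring spectra: the equivalence $\rH A\simeq i^*\rH\ul{A}$ is tautological; the equivalence $i^*\rH\ul{A}\simeq \Phi^{\Z/2}N^{\Z/2}i^*\rH\ul{A}$ is induced by the symmetric monoidal natural isomorphism $\Phi^{\Z/2}N^{\Z/2}=p_\otimes i_\otimes\simeq (pi)_\otimes=\id$ mentioned in the reminders; and the third arrow is the symmetric monoidal functor $\Phi^{\Z/2}$ applied to the counit $\mu\colon N^{\Z/2}i^*\rH\ul{A}\to\rH\ul{A}$ of the adjunction $N^{\Z/2}\dashv i^*$ on commutative ring objects. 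Hence on $\pi_0$ the composite is a ring map $\varphi_A\colon A\to A/2$, with the target identified by Proposition \ref{geometric.1}, and $\varphi_A$ is manifestly natural in the commutative ring $A$.

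Since the proposed map $a\mapsto a^2\bmod 2$ is also a natural ring homomorphism, and since any commutative ring admits a surjection from a polynomial algebra $\Z[x_i]_{i\in I}$ on which ring maps are determined by values on generators, the comparison reduces to the universal case $A=\Z[x]$ and to the single question: does the generator $x$ map to $x^2$ in $\F_2[x]$?

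For this computation I would track $x\in\pi_0\rH\Z[x]$ through each stage of the composite. Under the equivalence $\Phi^{\Z/2}N^{\Z/2}\simeq\id$, the class $x$ corresponds at $\pi_0$ to the multiplicative norm class in $\ul{\pi}_0(N^{\Z/2}i^*\rH\ul{\Z[x]})(\pt)$ whose underlying restriction is $x\otimes x\in\Z[x]\otimes\Z[x]$. The counit $\mu$ then sends this norm class to the product $x\cdot w(x)=x^2\in\Z[x]$, since the Weyl action $w$ is trivial on the constant Mackey functor $\ul{\Z[x]}$. Finally, applying $\Phi^{\Z/2}$ to the resulting class in $\pi_0(\rH\ul{\Z[x]})^{\Z/2}=\Z[x]$ factors through reduction modulo $2$ per the isotropy separation cofiber sequence analysed in the proof of Proposition \ref{geometric.1}. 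Concatenating, $x\mapsto x^2\bmod 2$ as desired.

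The principal obstacle is justifying rigorously the identification at $\pi_0$ of $a\in\pi_0 \rH A$ with the multiplicative norm class in $\ul{\pi}_0(N^{\Z/2}\rH A)(\pt)$ under the natural equivalence $p_\otimes i_\otimes\simeq\id$ coming from the Bachmann--Hoyois functor $\SH$ on $\Span(\FinGpd)$. Once this identification is in hand, the remaining manipulations amount to classical Mackey functor computations for constant coefficients, where the Weyl action is trivial and the norm becomes the squaring operation.
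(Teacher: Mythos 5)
Your overall strategy coincides with the paper's: identify $\pi_0\Phi^{\Z/2}$ of a $(-1)$-connected $\Z/2$-spectrum with the cokernel of the transfer on $\ul{\pi}_0$, and then compute the effect of the counit $N^{\Z/2}i^*\rH\ul{A}\to\rH\ul{A}$ on the class corresponding to $a$. The preliminary reduction to $A=\Z[x]$ via naturality is valid (Yoneda for the corepresentable forgetful functor), but it buys essentially nothing: tracking $x$ through the composite for $\Z[x]$ is verbatim the same computation as tracking a general $a$, so you still have to do the computation you were trying to avoid.

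The real problem is the step you yourself label ``the principal obstacle'': that under $\Phi^{\Z/2}N^{\Z/2}\simeq\id$ the class $a$ corresponds to the image in $\coker(\tr)$ of the multiplicative norm class $N(a)$, and that the counit sends $N(a)$ to $a\cdot w(a)$. These are not peripheral technicalities --- together they \emph{are} the content of the proposition, and as written your argument asserts rather than proves them. Note that the soft structure you do establish (a ring homomorphism $A\to A/2$ natural in $A$) only constrains the map to be $a\mapsto a^{2^i}\bmod 2$ for some fixed $i\geq 0$, so no amount of naturality and multiplicativity can substitute for a concrete identification. The paper supplies exactly this missing input by quoting the Dotto--Moi--Patchkoria--Reeh computation: $\ul{\pi}_0(N^{\Z/2}i^*\rH\ul{A})$ is the Mackey functor $W_2^\otimes(A)$ with $\tr(a\otimes b)=(0,a\otimes b)$ and $\res(a,x)=a\otimes a+x+w(x)$, so that $\coker(\tr)\cong A$ via the first coordinate; and the level-$\pt$ component $g$ of the counit is determined \emph{without} invoking any norm formula, because its level-$\Z/2$ component is the multiplication $f(a\otimes b)=ab$ and the target is a constant Mackey functor, whose restriction is the identity, giving $g=f\circ\res$ and hence $g(a,0)=f(a\otimes a)=a^2$. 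To complete your version you would need either to import this explicit $W_2^\otimes$ description or to prove the ``norm of an element'' identities in the Tambara-functor formalism; some input of this concrete kind is unavoidable.
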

\begin{proof}
Let $(A\otimes A)_{\Z/2}$ be the abelian group of coinvariants with the action $w$ on $A\otimes A$ given by $w(a\otimes b)=b\otimes a$ for $a,b\in A$.
Consider the abelian group $W_2^\otimes(A)$ in \cite[Definition 5.3]{DMPR21},
whose underlying set is $A\otimes (A\otimes A)_{\Z/2}$ with the group operation given by
\[
(a,x)+(b,y):=(a+b,x+y-a\otimes b)
\]
for $a,b\in A$ and $x,y\in (A\otimes A)_{\Z/2}$.
Using \cite[Proposition 5.5]{DMPR21},
we can describe $\ul{\pi}_0$ of the induced map $N_e^{\Z/2}i^*\rH \ul{A}\to \rH \ul{A}$ as the map of Mackey functors
\[
\begin{tikzcd}
W_2^\otimes(A)\ar[r,"g"]
\ar[d,"\res"',shift right=0.5ex]
\ar[d,"\tr",shift left=0.5ex,leftarrow]
&
A\ar[d,"\id"',shift right=0.5ex]
\ar[d,"2",shift left=0.5ex,leftarrow]
\\
A\otimes A\ar[loop left,"w"]\ar[r,"f"]&
A,\ar[loop right,"\id"]
\end{tikzcd}
\]
where $f(a\otimes b):=ab$, $\res(a,x):=a\otimes a+x+w(x)$, and $\tr(a\otimes b):=(0,a\otimes b)$ for $a,b\in A$ and $x\in (A\otimes A)_{\Z/2}$.
Since $f\circ \res=g$,
we have $g(a,0)=a^2$ for $a\in A$.
The induced map $\pi_0^{\Z/2}(\rH A)\to \pi_0(\Phi^{\Z/2}\rH \ul{A})$ is
\[
\coker(A\otimes A\xrightarrow{\tr} W_2^{\otimes}(A))
\to
\coker(A\xrightarrow{2} A).
\]
The above descriptions of $g(a,0)$ and $\res$ finish the proof.
\end{proof}

For a commutative $\F_2$-algebra $R$,
let $\varphi\colon R\to R$ be the Frobenius map.
For
an $R$-algebra $A$,
we have the induced commutative diagram
\begin{equation}
\label{geometric.3.1}
\begin{tikzcd}
R\ar[r,"\varphi"]\ar[d]&
R\ar[d]
\\
A\ar[r]\ar[rr,bend right,"\varphi"]&
A^{(1)}\ar[r]&
A,
\end{tikzcd}
\end{equation}
where the left vertical map is the structural map, and
$A^{(1)}:=R\otimes_{\varphi,R}A$.
Recall that the map $A^{(1)}\to A$ is the \emph{relative Frobenius of $R\to A$}.

Recall from \cite[Definition 7.2.2.10]{HA} that for a ring spectra $A$ and an $A$-module $M$,
$M$ is a \emph{flat $A$-module} if $\pi_0(M)$ is a flat $\pi_0(A)$-module and the induced map $\pi_n(R)\otimes_{\pi_0(R)}\pi_0(A)\to \pi_n(A)$ is an isomorphism for every integer $n$.

\begin{prop}
\label{geometric.7}
Let $R$ be a commutative ring,
and let $A$ be a flat $R$-algebra.
Then the induced map
\[
\Phi^{\Z/2}\rH \ul{R}\to \Phi^{\Z/2}\rH \ul{A}
\]
is flat.
If the relative Frobenius $B^{(1)}\to B$ of $R/2\to B:=A/2$ is flat too,
then the induced maps
\[
\rH A\wedge_{\rH R} \Phi^{\Z/2}\rH \ul{R}\to \Phi^{\Z/2}\rH \ul{A},
\;
\Phi^{\Z/2}\rH \ul{R}\to  \Phi^{\Z/2}\HR(A/R)
\]
are flat.
\end{prop}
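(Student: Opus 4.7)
The plan is to verify each flatness assertion in turn by computing $\pi_*$ explicitly via Proposition \ref{geometric.1} and applying flat base change at every step.

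\emph{First claim.} The hypothesis that $R\to A$ is flat implies both that $R/2\to A/2$ is flat and, via base change of the exact sequence $0\to R[2]\to R\xrightarrow{2}R$, that $R[2]\otimes_R A\cong A[2]$. Since $R[2]$ is $2$-torsion, this rewrites as $R[2]\otimes_{R/2}A/2\cong A[2]$. Together with Proposition \ref{geometric.1}, these identifications show $\pi_n\Phi^{\Z/2}\rH\ul A\cong \pi_n\Phi^{\Z/2}\rH\ul R\otimes_{R/2}A/2$ in every degree, which is precisely the flatness criterion for $\Phi^{\Z/2}\rH\ul R\to\Phi^{\Z/2}\rH\ul A$.

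\emph{Second claim.} Flatness of $\rH R\to\rH A$ gives $\pi_n(\rH A\wedge_{\rH R}\Phi^{\Z/2}\rH\ul R)\cong A\otimes_R\pi_n\Phi^{\Z/2}\rH\ul R$. By Proposition \ref{geometric.3}, the $R$-action on $\pi_n\Phi^{\Z/2}\rH\ul R$ comes from $\pi_0\rH R\to\pi_0\Phi^{\Z/2}\rH\ul R$, namely the ring map $R\to R/2$, $r\mapsto \bar r^2$, which factors as $R\twoheadrightarrow R/2\xrightarrow{\varphi}R/2$. Splitting the base change along this two-stage factorization yields
\[
A\otimes_R\pi_n\Phi^{\Z/2}\rH\ul R \;\cong\; B^{(1)}\otimes_{R/2}\pi_n\Phi^{\Z/2}\rH\ul R,
\]
with the standard $R/2$-action on the right factor, since $A\otimes_R R/2\cong B$ and then the Frobenius twist of $B$ produces $B^{(1)}$. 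On $\pi_0$ the universal property of the pushout identifies the induced map $B^{(1)}\to B$ with the relative Frobenius, which is flat by hypothesis. Combined with the identification $\pi_n\Phi^{\Z/2}\rH\ul A\cong B\otimes_{R/2}\pi_n\Phi^{\Z/2}\rH\ul R$ from the first claim, the map on $\pi_n$ is the base change of $B^{(1)}\to B$ along the flat $R/2$-module $\pi_n\Phi^{\Z/2}\rH\ul R$, so flatness follows.

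\emph{Third claim.} Interpreting the second map of the proposition as $\Phi^{\Z/2}\rH\ul R\to\Phi^{\Z/2}\HR(A/R)$, Proposition \ref{geometric.2} gives
\[
\Phi^{\Z/2}\HR(A/R)\simeq \Phi^{\Z/2}\rH\ul A\wedge_{\rH A\wedge_{\rH R}\Phi^{\Z/2}\rH\ul R}\Phi^{\Z/2}\rH\ul A.
\]
Viewed as a $\Phi^{\Z/2}\rH\ul A$-module via the left factor, this is the base change of the flat map from the second claim along itself, hence flat over $\Phi^{\Z/2}\rH\ul A$. Composing with the flat map $\Phi^{\Z/2}\rH\ul R\to\Phi^{\Z/2}\rH\ul A$ from the first claim gives the desired flatness. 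The main obstacle throughout is the identification of the $\pi_0$ map in the second claim as the relative Frobenius: this requires combining Proposition \ref{geometric.3}, which identifies $\pi_0(\rH A\to\Phi^{\Z/2}\rH\ul A)$ with the squaring map $a\mapsto\bar a^2$, with a two-stage pushout description of $B^{(1)}$ arising from the factorization $R\twoheadrightarrow R/2\xrightarrow{\varphi}R/2$. Once this is in place, the higher-degree statements reduce to formal flat base change.
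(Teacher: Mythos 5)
Your proof is correct and takes essentially the same route as the paper's: compute the three relevant $\pi_0$'s as $R/2$, $B^{(1)}$, $B$ via Propositions \ref{geometric.1} and \ref{geometric.3}, use flat base change along $R\to A$ to control higher homotopy groups, identify the $\pi_0$ map in the second claim with the relative Frobenius, and deduce the third claim from Proposition \ref{geometric.2} together with stability of flatness under base change and composition. One minor inaccuracy that does not affect the argument: for odd $n$ the group $\pi_n\Phi^{\Z/2}\rH\ul{R}\cong\ker(R\xrightarrow{2}R)$ need not be a flat $R/2$-module, but that flatness is never needed --- the required base-change isomorphism $(B^{(1)}\otimes_{R/2}M)\otimes_{B^{(1)}}B\cong B\otimes_{R/2}M$ is pure associativity of the tensor product.
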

\begin{proof}
By Propositions \ref{geometric.1} and \ref{geometric.3},
we have
\begin{equation}
\label{geometric.7.1}
\pi_0 \Phi^{\Z/2}\rH \ul{R}\cong R/2,
\;
\pi_0 (\rH A\wedge_{\rH R}\Phi^{\Z/2}\rH \ul{R})\cong B^{(1)},
\;
\pi_0 \Phi^{\Z/2}\rH \ul{A}\cong B.
\end{equation}
Since $R\to A$ is flat,
we have an isomorphism
\[
A\otimes_R \ker(R\xrightarrow{2} R)
\cong
\ker(A\xrightarrow{2} A).
\]
Together with Proposition \ref{geometric.1},
we see that $\Phi^{\Z/2}\rH \ul{R}\to \Phi^{\Z/2}\rH \ul{A}$ is flat.
This implies that the induced map
\[
\pi_n(\rH A\wedge_{\rH R} \Phi^{\Z/2}\rH \ul{R})
\otimes_{\pi_0(\rH A\wedge_{\rH R} \Phi^{\Z/2}\rH \ul{R})}
\pi_0(\Phi^{\Z/2}\rH \ul{A})
\to
\pi_n(\Phi^{\Z/2}\rH \ul{A})
\]
is an isomorphism for every integer $n$ since the flatness of $R\to A$ implies
\[
\pi_n(\rH A\wedge_{\rH R} \Phi^{\Z/2}\rH \ul{R})
\simeq
A\otimes_R \pi_n(\Phi^{\Z/2}\rH \ul{R}).
\]
Using Proposition \ref{geometric.3} and the assumption that $B^{(1)}\to B$ is flat,
we see that
\[
\pi_0(\rH A\wedge_{\rH R} \Phi^{\Z/2}\rH \ul{R})\to \pi_0(\Phi^{\Z/2}\rH \ul{A})
\]
is flat.
Hence $\rH A\wedge_{\rH R} \Phi^{\Z/2}\rH \ul{R}\to \Phi^{\Z/2}\rH \ul{A}$ is flat.
Since the tensor product of flat algebras is flat,
Proposition \ref{geometric.2} implies that $\Phi^{\Z/2}\rH \ul{R}\to \HR(A/R)$ is flat.
\end{proof}

\begin{prop}
\label{geometric.4}
Let $R$ be a commutative ring,
and let $A$ be a flat $R$-algebra.
If the relative Frobenius $B^{(1)}\to B$ of $R/2\to B:=A/2$ is flat too,
then we have a natural isomorphism
\[
\pi_0 \Phi^{\Z/2}\HR(A/R)
\cong
B\otimes_{B^{(1)}}B.
\]
\end{prop}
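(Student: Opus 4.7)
The plan is to read off $\pi_0$ from the presentation of $\Phi^{\Z/2}\HR(A/R)$ as a relative tensor product provided by Proposition \ref{geometric.2}, using the flatness statements of Proposition \ref{geometric.7} to justify that $\pi_0$ commutes with that tensor product.

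First I would write $C:=\rH A \wedge_{\rH R} \Phi^{\Z/2}\rH \ul{R}$, so that Proposition \ref{geometric.2} gives an equivalence of $\E_\infty$-rings
\[
\Phi^{\Z/2}\HR(A/R)
\simeq
\Phi^{\Z/2}\rH \ul{A}\wedge_{C}\Phi^{\Z/2}\rH \ul{A}.
\]
The two maps $C \to \Phi^{\Z/2}\rH \ul{A}$ appearing in this pushout are the maps analyzed in Proposition \ref{geometric.7}, which are flat under the hypothesis that $B^{(1)}\to B$ is flat. In particular $\Phi^{\Z/2}\rH \ul{A}$ is a flat $C$-module on either side.

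Next I would invoke the standard fact for flat maps of connective $\E_\infty$-rings: if $M$ and $N$ are flat $C$-modules, then the edge map
\[
\pi_0(M)\otimes_{\pi_0(C)}\pi_0(N)
\longrightarrow
\pi_0(M\wedge_C N)
\]
is an isomorphism (in fact the Tor spectral sequence degenerates). Applied to $M=N=\Phi^{\Z/2}\rH \ul{A}$, this yields
\[
\pi_0\Phi^{\Z/2}\HR(A/R)
\cong
\pi_0(\Phi^{\Z/2}\rH \ul{A})\otimes_{\pi_0(C)}\pi_0(\Phi^{\Z/2}\rH \ul{A}).
\]
Finally I would plug in the three $\pi_0$ computations from \eqref{geometric.7.1}, namely $\pi_0 \Phi^{\Z/2}\rH \ul{A}\cong B$ and $\pi_0 C\cong B^{(1)}$, to conclude $\pi_0\Phi^{\Z/2}\HR(A/R)\cong B\otimes_{B^{(1)}} B$.

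There is no real obstacle here beyond bookkeeping: all of the honest work has been done in Propositions \ref{geometric.2}, \ref{geometric.3}, and \ref{geometric.7}. The only point that deserves a sentence of care is the flatness hypothesis feeding into the Tor-vanishing statement; one should note that the flat structure map $C\to\Phi^{\Z/2}\rH \ul{A}$ is precisely the map whose flatness is asserted in Proposition \ref{geometric.7}, so the identification of $\pi_0$ of the pushout as an ordinary tensor product of commutative rings is automatic.
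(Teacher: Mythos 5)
Your proposal is correct and follows essentially the same route as the paper: both apply Proposition \ref{geometric.2} to present $\Phi^{\Z/2}\HR(A/R)$ as a relative smash product over $C=\rH A\wedge_{\rH R}\Phi^{\Z/2}\rH\ul{R}$, use the flatness from Proposition \ref{geometric.7} to identify $\pi_0$ of that product with the tensor product of the $\pi_0$'s, and conclude via the identifications in \eqref{geometric.7.1}. (In fact, for the $\pi_0$ statement alone connectivity of the factors already gives the edge-map isomorphism, so your appeal to flatness is slightly more than is strictly needed here, but it matches the paper's own argument.)
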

\begin{proof}
By Propositions \ref{geometric.2} and \ref{geometric.7},
we have a natural isomorphism
\[
\pi_0 \Phi^{\Z/2} \HR(A/R)
\cong
\pi_0 \Phi^{\Z/2} \rH \ul{A}
\otimes_{\pi_0(\rH A\wedge_{\rH R} \Phi^{\Z/2} \rH \ul{R})}
\pi_0 \Phi^{\Z/2} \rH \ul{A}.
\]
Combine this with \eqref{geometric.7.1} to obtain the desired isomorphism.
\end{proof}

For a commutative ring $R$,
let $\Poly_R$ be the category of finitely generated polynomial $R$-algebras,
and let $\Ani(\Ring_R)$ be the $\infty$-category of animated commutative $R$-algebras.
Recall from \cite[Example 5.1.3, 5.1.4]{CS24} that $\Ani(\Ring_R)$ is the $\infty$-category freely generated under sifted colimits by the category of commutative $R$-algebras that are retracts of finitely generated polynomial $R$-algebras.
Recall also from \cite[Construction 2.1]{BMS19} and \cite[Proposition 5.5.8.15]{HTT} that for every $\infty$-category $\cD$ admitting sifted colimits,
there exists a natural equivalence of $\infty$-categories
\begin{equation}
\label{geometric.5.2}
\Fun_\Sigma(\Ani(\Ring_R),\cD)
\xrightarrow{\simeq}
\Fun(\Poly_R,\cD),
\end{equation}
where the left-hand side denotes the full subcategory spanned by those functors $\Ani(\Ring_R)\to \cD$ preserving sifted colimits.
If a functor $f\colon \Poly_R\to \cD$ corresponds to a sifted colimit preserving functor $F\colon \Ani(\Ring_R)\to \cD$ via \eqref{geometric.5.2},
then we say that $F$ is a \emph{left Kan extension of $f$}.
Observe that $F$ is uniquely determined by $f$ in the $\infty$-categorical sense.

For a commutative ring $R$ and $A\in \Poly_R$,
let us recall the construction of the natural filtration on $\HR(A/R)$ in \cite[Theorem 4.32]{HP}.
If $A=R[x]$,
then let $\Fil_1\HR(A/R)$ be the fiber of the canonical map $\HR(A/R)\to A$.

If $A=R[x_1,\ldots,x_n]$,
then we have an equivalence of $\Z/2$-spectra
\[
\HR(A/R)
\simeq
\HR(R[x_1]/R)\wedge_{\rH \ul{R}} 
\cdots \wedge_{\rH \ul{R}}  \HR(R[x_n]/R),
\]
or we can write this as an equivalence
\begin{equation}
\label{geometric.5.3}
\HR(A/R)
\simeq
\HR(R[x_1]/R)\sotimes_{\ul{R}}^\L
\cdots
\sotimes_{\ul{R}}^\L \HR(R[x_n]/R)
\end{equation}
in the derived $\infty$-category $\rD(\ul{R})$,
where $\sotimes_{\ul{R}}^\L$ denotes the monoidal product of $\rD(\ul{R})$ \cite[Recollection 2.2]{HP}.
By taking $\sotimes_{\ul{R}}^\L$ in the right-hand side as the monoidal product of the derived $\infty$-category $\DF(\ul{R})$,
we obtain the filtration on $\HR(A/R)$.
By \cite{GP18} (see \cite[Lemma 5.2(5)]{BMS19} for a non-equivariant one),
we have
\begin{equation}
\label{geometric.5.1}
\gr^n\HR(A/R)
\simeq
\bigoplus_{q_1+\cdots+q_d=n}
\gr^{q_1}\HR(R[x_1]/R)\sotimes_{\ul{R}}^\L\cdots \sotimes_{\ul{R}}^\L\gr^{q_d}\HR(R[x_d]/R).
\end{equation}

\begin{prop}
\label{geometric.5}
Let $R$ be a commutative ring.
For $A\in \Poly_R$ and integer $n$,
$\Phi^{\Z/2} \Fil_n \HR(A/R)$ is a flat $\Phi^{\Z/2}\rH \ul{R}$-module.
Furthermore,
we have a natural isomorphism
\[
\pi_i\Phi^{\Z/2}\Fil_n \HR(A/R)
\cong
\left\{
\begin{array}{ll}
J^n & \text{if $i$ is even and $i\geq 0$},\\
\ker(R\xrightarrow{2} R)\otimes_R J^n & \text{if $i$ is odd and $i\geq 0$},
\\
0 &\text{if $i<0$},
\end{array}
\right.
\]
where $B:=A/2$, and $J$ is the kernel of the multiplication map $B\otimes_{B^{(1)}}B\to B$.
\end{prop}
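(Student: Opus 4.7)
My plan is to reduce the general case to the single-variable case via the multiplicative structure (\ref{geometric.5.3}) and the symmetric monoidality of the geometric fixed point functor $\Phi^{\Z/2}\simeq p_\otimes$.

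For the base case $A=R[x]$, the filtration is essentially two-step with $\Fil_0\HR(A/R)=\HR(A/R)$, $\Fil_1\HR(A/R)=\fib(\HR(A/R)\to \rH\ul{A})$, and $\Fil_n=0$ for $n\geq 2$. The case $n=0$ follows immediately: Proposition \ref{geometric.4} gives $\pi_0\Phi^{\Z/2}\HR(A/R)\cong B\otimes_{B^{(1)}}B$, Proposition \ref{geometric.7} gives flatness of $\Phi^{\Z/2}\rH\ul{R}\to \Phi^{\Z/2}\HR(A/R)$, and combining these with Proposition \ref{geometric.1} yields the full $\pi_*$-formula with $J^0=B\otimes_{B^{(1)}}B$. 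For $n=1$, apply $\Phi^{\Z/2}$ to the defining fiber sequence; the proof of Proposition \ref{geometric.4} identifies the induced $\pi_0$-map as the multiplication $B\otimes_{B^{(1)}}B\to B$, whose kernel is $J^1=J$, freely generated over $B$ by $t:=x\otimes 1-1\otimes x$ (with $t^2=0$). Since both $\Phi^{\Z/2}\HR(A/R)$ and $\Phi^{\Z/2}\rH\ul{A}$ are flat over $\Phi^{\Z/2}\rH\ul{R}$, the long exact sequence reduces to tensoring the short exact sequence $0\to J\to B\otimes_{B^{(1)}}B\to B\to 0$ with $\pi_i\Phi^{\Z/2}\rH\ul{R}$, yielding both the $\pi_*$-formula and flatness of $\Phi^{\Z/2}\Fil_1\HR(A/R)$.

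For the multi-variable case $A=R[x_1,\ldots,x_d]$, apply $\Phi^{\Z/2}$ to (\ref{geometric.5.3}) to obtain a decomposition of $\Phi^{\Z/2}\Fil_\bullet\HR(A/R)$ as a filtered tensor product over $\Phi^{\Z/2}\rH\ul{R}$ of the single-variable pieces $\Phi^{\Z/2}\Fil_\bullet\HR(R[x_i]/R)$. Flatness of each $\Phi^{\Z/2}\Fil_n\HR(A/R)$ then follows by induction on $d$ from stability of flat modules under tensor products.

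The main obstacle will be the combinatorial identification of the $n$-th level of this filtered tensor product with the ideal power $J^n\subset B\otimes_{B^{(1)}}B\cong B[t_1,\ldots,t_d]/(t_i^2)$. The Day-convolution filtration at level $n$ is spanned, in terms of $\pi_0$, by basis vectors $\prod_{i\in S}t_i$ with $|S|\geq n$, while $J^n$ (with $J=(t_1,\ldots,t_d)$ and $t_i^2=0$) is freely generated over $B$ by squarefree monomials $\prod_{i\in S}t_i$ with $|S|\geq n$. These agree by mapping each $t_i$ to the generator of $\pi_0\Phi^{\Z/2}\Fil_1\HR(R[x_i]/R)$; combined with the flatness established above and Proposition \ref{geometric.1}, this yields the full $\pi_*$-formula. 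The case $n\leq 0$ reduces to $n=0$ via the convention $\Fil_n\HR(A/R)=\HR(A/R)$ and $J^n=J^0$.
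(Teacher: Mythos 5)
Your proposal is correct and follows essentially the same route as the paper: reduce to the single-variable case via the tensor decomposition \eqref{geometric.5.3} and the monoidality of $\Phi^{\Z/2}$, use Propositions \ref{geometric.1}, \ref{geometric.3}, \ref{geometric.4}, and \ref{geometric.7} for $A=R[x]$, and then match squarefree monomials in the $\epsilon_i$ against $J^n\subset B[\epsilon_1,\ldots,\epsilon_d]/(\epsilon_i^2)$. The only cosmetic difference is that the paper identifies $\pi_0\Phi^{\Z/2}\gr^n\HR(A/R)\cong J^n/J^{n+1}$ via \eqref{geometric.5.1} and then passes to $\Fil_n$, whereas you identify the Day-convolution filtration level directly; both rest on the same flatness input.
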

\begin{proof}
We can deduce the claim for $\pi_i \Phi^{\Z/2}\Fil_n\HR(A/R)$ from Proposition \ref{geometric.1} and the claim for $\pi_0 \Phi^{\Z/2}\Fil_n \HR(A/R)$ if we know that $\Phi^{\Z/2}\Fil_n \HR(A/R)$ is a flat $\Phi^{\Z/2}\rH \ul{R}$.

Assume $A=R[x]$.
Then $\Phi^{\Z/2} \gr^0 \HR(A/R)\simeq \Phi^{\Z/2} \rH \ul{A}$ and $\Phi^{\Z/2} \HR(A/R)$ are flat $\Phi^{\Z/2} \rH \ul{R}$-modules by Proposition \ref{geometric.7}.
Hence $\Phi^{\Z/2} \Fil_n \HR(A/R)$ is a flat $\Phi^{\Z/2}\rH \ul{R}$-module for every integer $n$.
The map
$\HR(A/R)\to \rH \ul{A}$ can be identified with the map
\[
\rH \ul{A}\wedge_{N_e^{\Z/2}i^*\rH \ul{A}}\rH \ul{A}
\to
\rH \ul{A}\wedge_{\rH \ul{A}}\rH \ul{A}
\]
induced by the counit $N_e^{\Z/2}i^* \rH \ul{A}\to \rH \ul{A}$.
Together with Propositions \ref{geometric.3} and \ref{geometric.4},
we see that $\pi_0\Phi^{\Z/2}\HR(A/R)\to \pi_0\Phi^{\Z/2}\rH \ul{A}$ is the multiplication map $B\otimes_{B^{(1)}}B\to B$.
Hence we have $\pi_0\Phi^{\Z/2}\Fil_1\HR(A/R)\cong J$.
This finishes the proof when $A=R[x]$.

For general $A=R[x_1,\ldots,x_d]$,
we have
\[
B\otimes_{B^{(1)}}B
\cong
B[\epsilon_1,\ldots,\epsilon_d]/(\epsilon_1^2,\ldots,\epsilon_d^2),
\;
J=(\epsilon_1,\ldots,\epsilon_d)
\]
with $\epsilon_r:=x_r\otimes 1-1\otimes x_r$ for $1\leq r\leq d$.
Using \eqref{geometric.5.1} and the monoidality of $\Phi^{\Z/2}$,
we have
\begin{align*}
& \Phi^{\Z/2} \gr^n \HR(A/R)
\\
\simeq &
\bigoplus_{q_1+\cdots+q_d=n}
\Phi^{\Z/2}\gr^{q_1}\HR(R[x_1]/R)\wedge_{\Phi^{\Z/2}\rH R} \cdots \wedge_{\Phi^{\Z/2} \rH R} \Phi^{\Z/2}\gr^{q_d}\HR(R[x_d]/R).
\end{align*}
Hence $\Phi^{\Z/2} \gr^n \HR(A/R)$ is a flat $\Phi^{\Z/2}\rH \ul{R}$-module,
so $\Phi^{\Z/2}\Fil_n \HR(A/R)$ is a flat $\Phi^{\Z/2}\rH \ul{R}$-module.
We already know
\[
\pi_0\Phi^{\Z/2}\gr^{q_r} \HR(R[x_r]/R)
\cong
B_r\epsilon_r^{q_r}
\]
with $B_r:=R/2[x_r]$.
On the other hand,
we have $\pi_0\Phi^{\Z/2}\rH \ul{R}\cong R/2$ by Proposition \ref{geometric.1}.
Combine what we have discussed above to obtain an isomorphism
\[
\pi_0 \Phi^{\Z/2} \gr^n \HR(A/R)
\cong
\bigoplus_{q_1+\cdots+q_d=n}
B \epsilon_1^{q_1} \cdots \epsilon_n^{q_n},
\]
which can be identified with $J^n/J^{n+1}$.
This implies $\pi_0\Phi^{\Z/2}\Fil_n \HR(A/R)
\cong J^n$.
\end{proof}

By left Kan extension,
we can define $J^n$ and $\Fil_i \Phi^{\Z/2}\Fil_n\HR(A/R)$ for $A\in \Ani(\Ring_R)$ from $J^n$ and $\pi_i \Phi^{\Z/2}(\Fil_n \HR(A/R))$ for $A\in \Poly_R$.
We will show in Proposition \ref{conv.4} that $J^n$ is $(n-1)$-connected if $J$ is $0$-connected.
This will lead to the completeness of the filtration on $\HR(A/R)$ under a certain assumption,
see Theorem \ref{geometric.6}.

Let $R$ be a commutative ring.
For $A\in \Ani(\Ring_R)$,
recall from \cite[\S 2]{BMS19} that $\HH(A/R)$ admits the Hochschild--Kostant--Rosenberg filtration whose graded pieces are
\[
\gr^n \HH(A/R)
\simeq
\wedge_A^i \L_{A/R}[i].
\]

\begin{prop}
\label{geometric.8}
Let $R$ be a commutative ring.
For $A\in \Ani(\Ring_R)$ and integer $n$,
there exists a natural isomorphism
\[
i^*\Fil_n \HR(A/R)
\simeq
\Fil_n \HH(A/R).
\]
\end{prop}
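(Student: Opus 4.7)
The plan is to reduce to the case $A\in \Poly_R$ by left Kan extension, handle the one-variable case directly from the definition, and then use monoidality to conclude for polynomial algebras of arbitrary finite rank.

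First I would observe that both filtered objects are by construction determined by their values on $\Poly_R$. The HKR filtration on $\HH(-/R)$ is the left Kan extension of its restriction to $\Poly_R$ by \cite[Construction 2.1]{BMS19}, and the filtration $\Fil_\bullet \HR(-/R)$ was set up in \cite[Theorem 4.30]{HP} by the same procedure. Since $i^*\colon \Sp^{\Z/2}\to \Sp$ has a right adjoint $i_*$ and hence preserves all colimits (in particular sifted ones), the composite $A\mapsto i^*\Fil_n\HR(A/R)$ is also the left Kan extension of its restriction to $\Poly_R$. Via the equivalence \eqref{geometric.5.2}, it therefore suffices to produce the natural equivalence $i^*\Fil_n\HR(A/R)\simeq \Fil_n\HH(A/R)$ for $A\in \Poly_R$.

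For $A=R[x]$, the underlying equivalence $i^*\HR(R[x]/R)\simeq \HH(R[x]/R)$ comes from $i^*\THR\simeq \THH$ after forgetting the $S^\sigma$-action, because the $\Z/2$-space $S^\sigma$ restricts to the non-equivariant circle $S^1$, so the $\Z/2$-tensor with $S^\sigma$ restricts to the $\E_\infty$-tensor with $S^1$. By construction $\Fil_0\HR(R[x]/R)=\HR(R[x]/R)$, $\Fil_1\HR(R[x]/R)=\fib(\HR(R[x]/R)\to R[x])$, and $\Fil_n\HR(R[x]/R)=0$ for $n\geq 2$. On the non-equivariant side $\L_{R[x]/R}$ is free of rank one, so the HKR filtration on $\HH(R[x]/R)$ has the same shape: $\Fil_1\HH(R[x]/R)=\fib(\HH(R[x]/R)\to R[x])$ and $\Fil_n=0$ for $n\geq 2$. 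Applying $i^*$ to the fiber sequence defining $\Fil_1\HR(R[x]/R)$ and using the identification above produces the required equivalence in the one-variable case.

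For $A=R[x_1,\ldots,x_d]$, the decomposition \eqref{geometric.5.3} expresses $\HR(A/R)$ as an iterated tensor product of the $\HR(R[x_i]/R)$ in $\DF(\ul{R})$ under $\sotimes_{\ul{R}}^\L$. The restriction $i^*$ induces a symmetric monoidal exact functor $\DF(\ul{R})\to \DF(R)$ carrying $\sotimes_{\ul{R}}^\L$ to $\otimes_R^\L$, and thus identifies the tensor-product filtration on $\HR(A/R)$ with the tensor-product filtration on $\HH(R[x_1]/R)\otimes_R^\L\cdots \otimes_R^\L \HH(R[x_d]/R)$. Since the HKR filtration on $\HH(R[x_1,\ldots,x_d]/R)$ admits the same multiplicative description (this is how one proves the K\"unneth-type decomposition analogous to \eqref{geometric.5.1} on the non-equivariant side), combining with the one-variable case yields the equivalence for all $A\in \Poly_R$, and hence by the first paragraph for all $A\in \sCRing_R$.

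The main obstacle is conceptual rather than computational: one must verify that $i^*$ is compatible with the filtered symmetric monoidal structures used to build $\Fil_\bullet \HR$, so that the tensor-product recipe of \cite[Theorem 4.30]{HP} really does restrict to the tensor-product recipe defining the HKR filtration. Once this compatibility is in place the argument is formal, since the remaining ingredients are the identification $i^*\THR\simeq \THH$ after forgetting the action and the preservation of sifted colimits by $i^*$.
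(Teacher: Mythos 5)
Your proposal is correct and follows essentially the same route as the paper: reduce to $A\in\Poly_R$ by left Kan extension (using that $i^*$ preserves sifted colimits), treat $A=R[x]$ directly, and handle general polynomial algebras via the tensor decomposition \eqref{geometric.5.3} together with the monoidality of $i^*$ on filtered objects. The paper phrases the polynomial-algebra step as identifying $i^*\Fil_n\HR(A/R)$ with the Postnikov filtration on $\HH(A/R)$, citing \cite[Lemma 4.27]{HP} for the one-variable case, which for polynomial algebras is exactly the two-step HKR filtration you write down.
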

\begin{proof}
By left Kan extension,
we reduce to the case when $A\in \Poly_R$.
To conclude,
check that $i^*\Fil_n \HR(A/R)$ is the Postnikov filtration on $\HH(A/R)$ using \cite[Lemma 4.28]{HP} for $A=R[x]$ and using \eqref{geometric.5.3} for general $A$.
\end{proof}

Our next interest is to show that the filtration on real Hochschild homology is $S^\sigma$-equivariant,
see Proposition \ref{TPR.11}.

\begin{const}
\label{TPR.3}
For all integers $m$ and $n$,
we have a natural equivalence $i^* \Sigma^{m+n\sigma}\simeq \Sigma^{m+n}$.
Use this to construct the $\Z/2$-functor
\[
\Sigma^{m+n\sigma}
\colon
\ul{\Sp}^{\Z/2}
\to
\ul{\Sp}^{\Z/2}
\]
that is $\Sigma^{m+n\sigma}$ on the fiber of $*\in \cO_{\Z/2}^{op}$ and $\Sigma^{m+n}$ on the fiber of $\Z/2\in \cO_{\Z/2}^{op}$.
\end{const}

Do not confuse $\ul{\Sp}_{\geq n}^{\Z/2}$ with $\Sigma^n \ul{\Sp}_{\geq 0}^{\Z/2}$.

\begin{const}
\label{TPR.10}
By Proposition \ref{slice.5},
we have the $\Z/2$-functor $P_n\colon \ul{\Sp}^{\Z/2}\to \ul{\Sp}^{\Z/2}$ for every integer $n$ such that this is $P_n\colon \Sp^{\Z/2}\to \Sp^{\Z/2}$ on the fiber at $\Z/2\in \cO_{\Z/2}^{op}$ and $\tau_{\geq n}\colon \Sp\to \Sp$ on the fiber at $*\in \cO_{\Z/2}^{op}$.
We also have natural $\Z/2$-transformations
\(
\cdots \to P_n\to P_{n-1}\to \cdots
\)
of $\Z/2$-endofunctors on $\ul{\Sp}^{\Z/2}$.

Applying $\Fun_{\Z/2}(BS^\sigma,-)$ to this,
we obtain the induced natural transformations
\(
\cdots \to P_n\to P_{n-1}\to \cdots
\)
of endofunctors on $(\Sp^{\Z/2})^{BS^\sigma}$,
which we call the \emph{$S^\sigma$-equivariant slice filtration}.
\end{const}

We refer to \cite[Corollary 4.39]{HP23} for a natural filtration on $p$-completed Hochschild homology.

\begin{prop}
\label{TPR.11}
Let $R$ be a commutative ring,
and let $A$ be an animated commutative $R$-algebra.
Then the natural filtration on $\HR(A/R)$ (resp.\ $\HR(A/R;\Z_p)$) can be made into a natural $S^\sigma$-equivariant filtration whose $n$th graded piece has the trivial $S^\sigma$-action for every integer $n$.
\end{prop}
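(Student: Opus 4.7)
The plan is to construct the $S^\sigma$-equivariant filtration first for $A\in \Poly_R$ using Day convolution of filtered $\Z/2$-spectra with $S^\sigma$-action, and then to left Kan extend to all of $\sCRing_R$. The main obstacle is promoting the non-equivariant tensor-product decomposition \eqref{geometric.5.3} to a genuine identification in the $\infty$-category $\Fun(\Z^{op},(\Sp^{\Z/2})^{BS^\sigma})$, and then verifying that the resulting graded pieces lie in the essential image of $j^*$ (the ``trivial $S^\sigma$-action'' subcategory).

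For the base case $A=R[x]$, the collapsing map $S^\sigma\to *$ induces an augmentation $\varepsilon\colon \HR(R[x]/R)\to \rH\ul{R[x]}$ in $(\NAlg^{\Z/2})^{BS^\sigma}$, where $\rH\ul{R[x]}$ carries the trivial $S^\sigma$-action coming from $j^*$. Set $\Fil_0\HR(R[x]/R):=\HR(R[x]/R)$, $\Fil_1\HR(R[x]/R):=\fib(\varepsilon)$ in $(\Sp^{\Z/2})^{BS^\sigma}$, and $\Fil_n:=0$ for $n\geq 2$. By Proposition \ref{G-cat.3}, $q^*$ preserves fibers, so the underlying filtered $\Z/2$-spectrum recovers the one used in \cite[Lemma 4.27]{HP}. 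For $A=R[x_1,\ldots,x_d]\in \Poly_R$, equip $\Fun(\Z^{op},(\Sp^{\Z/2})^{BS^\sigma})$ with the Day convolution symmetric monoidal structure induced by $+\colon \Z^{op}\times \Z^{op}\to \Z^{op}$, and define $\Fil_\bullet \HR(A/R)$ as the tensor product of the filtered objects $\Fil_\bullet\HR(R[x_i]/R)$ over $\rH\ul{R}$ (placed in filtration degree $0$). The decomposition \eqref{geometric.5.1} then lifts to $(\Sp^{\Z/2})^{BS^\sigma}$, and each summand is an iterated $\rH\ul{R}$-tensor of objects of the form $\Sigma^\sigma \rH\ul{R[x_i]}$ or $\rH\ul{R[x_i]}$; by Proposition \ref{TPR.4}, the induced $S^\sigma$-action on each such summand is trivial.

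For general $A\in \sCRing_R$, apply the equivalence \eqref{geometric.5.2} with target $\Fun(\Z^{op},(\Sp^{\Z/2})^{BS^\sigma})$ to left Kan extend the construction from $\Poly_R$. Since $q^*$ preserves colimits by Proposition \ref{G-cat.3} and agrees with the construction of \cite[Theorem 4.30]{HP} on $\Poly_R$, the underlying filtered $\Z/2$-spectrum is the one in \cite[Theorem 4.30]{HP}. For the triviality of the $S^\sigma$-action on graded pieces, observe that for each integer $n$ the functor $\gr^n\colon \Poly_R\to (\Sp^{\Z/2})^{BS^\sigma}$ factors through $j^*\colon \Sp^{\Z/2}\to (\Sp^{\Z/2})^{BS^\sigma}$ by the polynomial case above. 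Since $j^*$ is a left adjoint (to $j_*$, by Proposition \ref{G-cat.1}) and hence preserves colimits, this factorization is inherited by the left Kan extension to $\sCRing_R$. For the $p$-completed version, apply the $\Z/2$-functorial $p$-completion of Construction \ref{comp.7} termwise; as a left adjoint it preserves colimits, hence both the formation of graded pieces and the factorization through $j^*$ are preserved.
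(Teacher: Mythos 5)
Your overall strategy is sound, and your treatment of the triviality of the $S^\sigma$-action on graded pieces (reduce to $\Poly_R$ by left Kan extension, identify the graded pieces with suspended Eilenberg--MacLane objects, apply Proposition \ref{TPR.4}, and use that $j^*$ preserves colimits to propagate the factorization through $j^*$) is essentially the paper's argument, articulated somewhat more explicitly. Your route to the $S^\sigma$-equivariance of the filtration itself, however, is genuinely different: you build it by hand, taking the fiber of the collapse-induced augmentation $\HR(R[x]/R)\to \rH\ul{R[x]}$ in $(\Sp^{\Z/2})^{BS^\sigma}$ and then forming Day convolution tensor products over $\rH\ul{R}$. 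The paper instead observes that the filtration of \cite[Lemma 4.26]{HP} is (identified with) the slice filtration and then invokes the $S^\sigma$-equivariant slice filtration of Construction \ref{TPR.10}, so that equivariance and naturality come for free from a natural transformation of endofunctors on $(\Sp^{\Z/2})^{BS^\sigma}$.

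This difference exposes the one concrete gap in your argument: naturality on $\Poly_R$. Your Day-convolution construction is carried out for a \emph{chosen} presentation $A=R[x_1,\dots,x_d]$, and you never explain why it is independent of the choice of generators or functorial for arbitrary $R$-algebra maps in $\Poly_R$ (which need not respect generators). This is not a cosmetic issue: the left Kan extension via \eqref{geometric.5.2} requires a functor on $\Poly_R$ as input before you can even begin, and the statement being proved asserts that the \emph{natural} filtration can be upgraded to a \emph{natural} $S^\sigma$-equivariant one. To close the gap you would need to identify your ad hoc filtration, $S^\sigma$-equivariantly and functorially, with something intrinsically natural --- which is exactly what the detour through the $S^\sigma$-equivariant slice filtration accomplishes in the paper's proof. (Two minor points: preservation of fibers by $q^*$ follows from Proposition \ref{G-cat.3} only because the categories are stable, so fibers are shifted cofibers; and the summands in your lifted version of \eqref{geometric.5.1} are $\rH\ul{R}$-relative smash products of Eilenberg--MacLane objects, so you should note that flatness is what keeps them Eilenberg--MacLane before applying Proposition \ref{TPR.4}.)
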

\begin{proof}
The filtration in the proof of \cite[Lemma 4.27]{HP} is $S^\sigma$-equivariant using the $S^\sigma$-equivariant slice filtration in Construction \ref{TPR.10}.
Hence the natural filtration on $\HR(A/R)$ can be made into a natural $S^\sigma$-equivariant.
Together with the $S^\sigma$-equivariant $p$-completion in Construction \ref{comp.7},
we see that the natural filtration on $\HR(A/R;\Z_p)$ can be made into a natural $S^\sigma$-equivariant filtration.

The graded pieces of the natural filtrations on $\HR(A/R)$ and $\HR(A/R;\Z_p)$ are
\begin{gather*}
\gr^n \HR(A/R)
\simeq
\Sigma^{n\sigma}\iota \wedge_A^n \L_{A/R},
\\
\gr^n \HR(A/R;\Z_p)
\simeq
\Sigma^{n\sigma} \iota (\wedge_A^n \L_{A/R})_p^\wedge,
\end{gather*}
see \cite[Theorem 4.32, Corollary 4.40]{HP}.
To show that the $S^\sigma$-actions on these are trivial,
we reduce to the case when $A\in \Poly_R$ by left Kan extension.
In this case,
$\wedge_A^n \L_{A/R}$ is concentrated in degree $0$.
We finish the proof by \cite[Proposition 6.10]{QS22}.
\end{proof}

For a commutative ring $A$ and an $A$-module $M$,
let $\Gamma_A (M)$ denote the divided power algebra.
Recall that a \emph{(commutative) differential graded algebra $(A,d)$} is a (commutative) graded algebra $A$ equipped with a map $d\colon A\to A$ of degree $-1$ such that $d\circ d=0$ and $d(a\cdot b)=(da)\cdot b +(-1)^k a\cdot (db)$ for $a,b\in A$ such that $a$ is homogeneous of degree $k$.

\begin{lem}
\label{conv.1}
Let $R$ be a commutative ring of characteristic $2$.
For $A\in \Poly_R$,
there is a natural equivalence
\[
\HH(A/A^{(1)})\simeq
(\Gamma_A(\Omega_{A/R}^1),0).
\]
\end{lem}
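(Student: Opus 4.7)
\emph{Proof plan.} The strategy is to reduce to the monogenic case $A = R[x]$ via a K\"unneth argument, then compute $\HH(A/A^{(1)})$ directly by a resolution that collapses in characteristic $2$, and finally upgrade the answer to a commutative DGA equivalence.

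For the reduction, if $A = A_1 \otimes_R A_2$ with $A_1, A_2 \in \Poly_R$, then $A^{(1)} \simeq A_1^{(1)} \otimes_R A_2^{(1)}$, and the standard base-change property of Hochschild homology yields
\[
\HH(A/A^{(1)}) \simeq \HH(A_1/A_1^{(1)}) \otimes_R \HH(A_2/A_2^{(1)}).
\]
On the right-hand side of the desired equivalence, $\Omega^1_{A/R}$ splits as a direct sum, and $\Gamma_A$ of a direct sum of free rank-one modules is the tensor product over $A$ of the corresponding divided power algebras. So it suffices to treat $A = R[x]$.

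For the monogenic case, the relative Frobenius identifies with $R[y] \to R[x]$, $y \mapsto x^2$, so $A$ is free of rank $2$ over $A^{(1)}$ and
\[
A \otimes_{A^{(1)}} A \simeq R[u,v]/(u^2 - v^2).
\]
Setting $\epsilon := u-v$ and using $\mathrm{char}\,R = 2$, one computes $\epsilon^2 = u^2 + v^2 = u^2 - v^2 = 0$, so $A \otimes_{A^{(1)}} A \cong A[\epsilon]/\epsilon^2$, a trivial square-zero extension of $A$ whose multiplication to $A$ kills $\epsilon$. Using the minimal periodic free resolution
\[
\cdots \xrightarrow{\,\epsilon\,} A[\epsilon]/\epsilon^2 \xrightarrow{\,\epsilon\,} A[\epsilon]/\epsilon^2 \xrightarrow{\,\epsilon\,} A[\epsilon]/\epsilon^2 \to A \to 0
\]
one computes $\HH(A/A^{(1)}) = A \otimes^{\mathbb{L}}_{A[\epsilon]/\epsilon^2} A$: tensoring with $A$ kills every $\epsilon$-differential, so $\HH_i(A/A^{(1)}) \simeq A$ for every $i \geq 0$, matching the graded $A$-module underlying $\Gamma_A(A\cdot dx)$ with $|dx|=1$.

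The final and most delicate step is to upgrade this to an equivalence of commutative DGAs. Since $\HH(A/A^{(1)})$ is a connective $\E_\infty$-algebra over $R$ whose homotopy is free over $A$ in every degree, one expects formality and hence an equivalence with $(\pi_*\HH(A/A^{(1)}), 0)$. The main obstacle is then recognizing the product on $\pi_*$ as the divided power algebra, rather than, say, a truncated polynomial algebra on $dx$: this is a characteristic-$2$ phenomenon, reflecting that the shuffle product on the Hochschild complex of a square-zero extension automatically produces divided powers on positive-degree classes (the relation $\epsilon^2=0$ forces $(\gamma_1 dx)^2 = 2\gamma_2(dx) = 0$ while $\gamma_2(dx)$ remains nonzero in the next homotopy group), matching the universal property of $\Gamma_A$ over $\F_2$-algebras.
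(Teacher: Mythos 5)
Your reduction to $A=R[x]$, the identification $A\otimes_{A^{(1)}}A\cong A[\epsilon]/\epsilon^2$ via $\epsilon^2=u^2-v^2=0$ in characteristic $2$, and the computation of the underlying graded module of $\HH(A/A^{(1)})$ from the periodic resolution are all correct, and they isolate the same characteristic-$2$ input the paper uses. The genuine gap is the last step: the lemma asserts an equivalence with the commutative differential graded algebra $(\Gamma_A(\Omega_{A/R}^1),0)$, and identifying the multiplicative structure is the actual content of the statement (as graded $A$-modules, $\Gamma_A(\Omega_{A/R}^1)$ and a polynomial algebra on a degree-one generator are indistinguishable). ``One expects formality'' because the homotopy is free in each degree is not an argument --- formality does not follow from freeness of homotopy groups --- and even granting formality you would still have to prove that the ring $\Tor_*^{A[\epsilon]/\epsilon^2}(A,A)$ is the divided power algebra, i.e.\ verify all the relations $\gamma_i\gamma_j=\binom{i+j}{i}\gamma_{i+j}$, not merely $\gamma_1^2=0$ together with the nonvanishing of $\gamma_2$; the latter two facts are equally consistent with, say, $\Lambda_A(e)\otimes_A A[y]$ with $|e|=1$, $|y|=2$. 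Your parenthetical about the shuffle product points at the right phenomenon, but it is precisely the thing that needs proof.

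The paper closes exactly this gap by making the resolution itself multiplicative: it resolves $A$ over $B:=A\otimes_{A^{(1)}}A$ by the commutative differential graded algebra $(\Gamma_B(\Omega_{A/R}^1\otimes_A B),d)$ with the divided-power Koszul differential determined by $d\gamma_i(x_r)=\epsilon_r\gamma_{i-1}(x_r)$, checks that the augmentation to $A$ is a quasi-isomorphism (in one variable this is your periodic complex: $\Gamma_B$ of a rank-one free module is $\bigoplus_i B\gamma_i$ with $d$ acting as multiplication by $\epsilon$), and then base-changes along $B\to A$. Since the $\epsilon_r$ die in $A$, the result is $(\Gamma_A(\Omega_{A/R}^1),0)$ with its algebra structure on the nose, and naturality follows from naturality of the augmentation. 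So the fix is not a new idea so much as noticing that the complex you already wrote down underlies a divided power algebra over $B$; if you want the lemma in the multiplicative form in which it is stated (and in which it is used afterwards, where the Koszul-type exact sequence relating $\Gamma_A^q$ and $\wedge_A^n$ is extracted from this model), you should carry that algebra structure through the computation rather than try to reconstruct it a posteriori.
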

\begin{proof}
Assume $A=R[x_1,\ldots,x_n]$.
Since $A^{(1)}\to A$ is flat,
we have
\[
B:=A\otimes_{A^{(1)}}A\simeq A\otimes_{A^{(1)}}^\L A.
\]
We can write the divided power algebra $\Gamma_B(\Omega_{A/R}^1\otimes_A B)$ as the free $B$-module
\[
\Gamma_B(\Omega_{A/R}^1\otimes_A B)
\cong
\bigoplus_{i_1,\ldots,i_n\geq 0} B \gamma_{i_1}(x_1)\cdots \gamma_{i_n}(x_n).
\]
The multiplication on  $\Gamma_B(\Omega_{A/R}^1\otimes_A B)$ is given by the formula
\[
\gamma_i(x_r)\gamma_j(x_r)
:=
\binom{i+j}{i}\gamma_{i+j}(x_r)
\]
for $1\leq r\leq n$.
Consider the map $d\colon \Gamma_B(\Omega_{A/R}^1\otimes_A B)\to \Gamma_B(\Omega_{A/R}^1\otimes_A B)$ given by the formula
\begin{align*}
&d(y\gamma_{i_1}(x_1)\cdots \gamma_{i_n}(x_n)) 
\\
:= &
\epsilon_1 y\gamma_{i_1-1}(x_1)\cdots \gamma_{i_n}(x_n)
+
\cdots
+
\epsilon_n y\gamma_{i_1}(x_1)\cdots \gamma_{i_n-1}(x_n)
\end{align*}
for $y\in B$,
where $\epsilon_r:=x_r\otimes 1-1\otimes x_r$ for $1\leq r\leq n$,
and $\gamma_{-1}=0$.
Observe that $d$ is a derivation.
We can form the commutative differential graded algebra $(\Gamma_B(\Omega_{A/R}^1\otimes_A B),d)$.

We claim that the truncation map
\begin{equation}
\label{conv.1.1}
(\Gamma_B(\Omega_{A/R}^1\otimes_A B),d)
\to
A
\end{equation}
is a quasi-isomorphism of commutative differential graded algebras.
If $n=1$,
then we can check this using $\epsilon_1^2=0$.
For general $n$,
we set $A_i:=R[x_i]$ and $B_i:=A_i\otimes_{A_i^{(1)}}A_i$.
Then there is an isomorphism of commutative differential graded algebras
\[
(\Gamma_B(\Omega_{A/R}^1\otimes_A B),d)
\cong
(\Gamma_{B_1}(\Omega_{A_1/R}^1\otimes_{A_1} B_1),d)
\otimes_R
\cdots
\otimes_R
(\Gamma_{B_n}(\Omega_{A_n/R}^1\otimes_{A_n} B_n),d).
\]
Using the case of $n=1$,
we see that the right-hand side is quasi-isomorphic to $A_1\otimes_R \cdots \otimes_R A_n$,
which is isomorphic to $A$.

Hence we have a quasi-isomorphism
\[
(\Gamma_B(\Omega_{A/R}^1\otimes_A B),d)\otimes_B A
\simeq
A\otimes_B^\L A.
\]
The left hand-side is isomorphic to $(\Gamma_A(\Omega_{A/R}^1),0)$ since the image of $\epsilon_r$ in $A$ is $0$ for $1\leq r\leq n$.
Hence we obtain the desired equivalence,
which is natural in $A$ since the truncation map \eqref{conv.1.1} is natural in $A$.
\end{proof}

\begin{lem}
\label{conv.2}
Let $R$ be a commutative ring of characteristic $2$.
For $A\in \Poly_R$ and integers $q\geq 1$ and $n$,
there exists a short exact sequence of $A$-modules
\[
0
\to
\Tor_{q+1}^B(A,B/J^n)
\to
\Gamma_A^qN \otimes_A \wedge_A^n N
\to
\Tor_q^B(A,B/J^{n+1})
\to
0,
\]
where $B:=A\otimes_{A^{(1)}}A$, $J$ is the kernel of the multiplication map $B\to A$,
and $N:=\Omega_{A/R}^1$.
\end{lem}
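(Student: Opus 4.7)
The plan is to derive the short exact sequence from the Tor long exact sequence attached to the short exact sequence of $B$-modules
\[
0 \to J^n/J^{n+1} \to B/J^{n+1} \to B/J^n \to 0.
\]

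First I will identify $J^n/J^{n+1}\cong \wedge_A^n N$. From the proof of Proposition \ref{geometric.5} we have $B\cong A[\epsilon_1,\ldots,\epsilon_d]/(\epsilon_1^2,\ldots,\epsilon_d^2)$ for $A=R[x_1,\ldots,x_d]$, and since $-1=1$ in characteristic $2$ this identifies $B$ with the exterior algebra on $N$ via $\epsilon_i\leftrightarrow dx_i$; in particular $J^p/J^{p+1}\cong \wedge_A^p N$ for every $p\geq 0$. Since $\wedge_A^n N$ is a free $A$-module, Lemma \ref{conv.1} yields
\[
\Tor_q^B(A,\wedge_A^n N)\cong \Gamma_A^q N\otimes_A \wedge_A^n N.
\]
The long exact sequence of Tor then takes the form
\[
\cdots \to \Tor_{q+1}^B(A,B/J^{n+1}) \xrightarrow{\pi_*} \Tor_{q+1}^B(A,B/J^n) \xrightarrow{\partial} \Gamma_A^q N\otimes_A \wedge_A^n N \to \Tor_q^B(A,B/J^{n+1}) \xrightarrow{\pi_*} \Tor_q^B(A,B/J^n) \to \cdots,
\]
and the desired short exact sequence is equivalent to the vanishing of $\pi_*$ in Tor-degrees $\geq 1$.

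To prove this vanishing I will use the filtration spectral sequence associated to the $J$-adic filtration $F^p(B/J^m):=J^p/J^m$ on $B/J^m$. Tensoring the resolution $F_*=\Gamma_B(N\otimes_A B)$ from Lemma \ref{conv.1} over $B$ with this filtration equips the chain complex $F_*\otimes_B B/J^m\cong \Gamma_A N\otimes_A B/J^m$ with a finite filtration whose associated graded is $\bigoplus_{0\leq p<m}\Gamma_A N\otimes_A \wedge_A^p N$ and whose $d_1$-differential is the Koszul map $\xi\otimes\omega\mapsto \sum_r \partial_r\xi\otimes\epsilon_r\wedge\omega$. Koszul duality between the divided-power and exterior algebras on $N$ asserts that the untruncated Koszul complex $\bigoplus_{p,q\geq 0}\Gamma_A^q N\otimes_A \wedge_A^p N$ is acyclic in positive total degree; a direct analysis of the truncation at $p<m$ then shows that the $E_2$-page of the spectral sequence for $B/J^m$ is concentrated at filtration level $p=m-1$ in every positive homological degree.

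Consequently, for $q\geq 1$ we have $F^{m-1}\Tor_q^B(A,B/J^m)=\Tor_q^B(A,B/J^m)$ and $F^m\Tor_q^B(A,B/J^m)=0$. Since the projection $B/J^{n+1}\to B/J^n$ is a map of $J$-adically filtered modules, $\pi_*$ preserves these filtrations, so its image on $\Tor_q^B(A,B/J^{n+1})=F^n$ lies inside $F^n\Tor_q^B(A,B/J^n)=0$, forcing $\pi_*=0$. The main obstacle is verifying Koszul exactness in characteristic $2$ together with the precise location of the truncated $E_2$-page; once this is pinned down, the short exact sequence follows formally from the long exact sequence.
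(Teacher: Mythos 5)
Your proposal is correct, and it rests on exactly the same two inputs as the paper's proof: the explicit divided-power resolution $(\Gamma_B(N\otimes_A B),d)\to A$ of Lemma \ref{conv.1} (giving $\Tor_q^B(A,J^n/J^{n+1})\cong\Gamma_A^qN\otimes_A\wedge_A^nN$ via $J^n/J^{n+1}\cong\wedge_A^nN$), and the acyclicity in positive total degree of the divided-power/exterior Koszul complex (the paper cites Illusie, \S 4.3.1.7 of \cite{MR0491680}; this holds over any base since $N$ is free, so characteristic $2$ causes no trouble). Both arguments also make the same reduction: the short exact sequence is equivalent to the vanishing of the transition maps $\Tor_q^B(A,B/J^{m+1})\to\Tor_q^B(A,B/J^m)$ for $q\geq 1$. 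Where you differ is in how that vanishing is established. The paper computes the connecting homomorphism $\delta\colon\Tor_q^B(A,J^{m-1}/J^m)\to\Tor_q^B(A,J^m/J^{m+1})$ on explicit cycles, identifies it with the Koszul differential, and runs an induction on $m$ through the long exact sequences. You instead filter the complex $\Gamma_B(N\otimes_AB)\otimes_BB/J^m$ $J$-adically; since $d$ raises the filtration by one, the $E_1$-page is the truncated Koszul complex, whose positive-degree homology sits only in top filtration $p=m-1$, so $\Tor_q^B(A,B/J^m)=F^{m-1}$ for $q\geq 1$ and the filtration-preserving map out of $\Tor_q^B(A,B/J^{m+1})=F^{m}$ lands in $F^{m}\Tor_q^B(A,B/J^m)=0$. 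This is a clean repackaging that avoids the induction and the diagram chase; the price is that the two points you defer --- the identification of $d_1$ with the Koszul differential and the location of the truncated $E_2$-page --- are precisely the computations the paper carries out by hand (its formula \eqref{conv.2.3} for $\delta$ and the exactness of \eqref{conv.2.5}), so they must still be checked, though both are routine: the $d_1$ identification follows because multiplication by $\epsilon_r$ induces $\omega\mapsto dx_r\wedge\omega$ on $\gr_J^\bullet B$, and the truncated complex is exact below the top spot because the untruncated one is exact everywhere in positive total degree. Your argument is complete once these are written out.
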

\begin{proof}
Compare this proof with the proof of \cite[Proposition 8.7]{Qui}.
Assume $A=R[x_1,\ldots,x_d]$.
Then we have 
\[
B\cong R[x_1,\ldots,x_d,\epsilon_1,\ldots,\epsilon_d]/(\epsilon_1^2,\ldots,\epsilon_d^2)
\]
with $\epsilon_i:=x_i\otimes 1-1\otimes x_i\in B$.
The $A$-module $J^n/J^{n+1}$ is isomorphic to the free module with the basis $\{\epsilon_{i_1}\cdots \epsilon_{i_n}:1\leq i_1<\cdots<i_n\leq d\}$.
Using this,
we have the natural isomorphism
\begin{equation}
\label{conv.2.1}
J^n/J^{n+1}
\cong
\wedge_A^n N.
\end{equation}
Together with Lemma \ref{conv.1},
we have the natural isomorphisms
\begin{equation}
\label{conv.2.2}
\Tor_q^B(A,J^n/J^{n+1})
\cong
\Tor_q^B(A,A)\otimes_A J^n/J^{n+1}
\cong
\Gamma_A^q N
\otimes_A
\wedge_A^n N.
\end{equation}

The short exact sequence
\[
0\to J^{n}/J^{n+1}\to J^{n-1}/J^{n+1}\to J^{n-1}/J^n\to 0
\]
induces the boundary map
\[
\delta\colon \Tor_q^B(A,J^{n-1}/J^n)\to \Tor_q^B(A,J^n/J^{n+1}).
\]
Since \eqref{conv.1.1} is a quasi-isomorphism,
the cofiber sequence
\[
A\otimes_B^\L J^n/J^{n+1}
\to
A\otimes_B^\L J^{n-1}/J^{n+1}
\to
A\otimes_B^\L J^{n-1}/J^n
\]
admits the model
\[
(\Gamma_A N,0)\otimes_A J^n/J^{n+1}
\xrightarrow{u}
(\Gamma_B(N\otimes_A B),d)\otimes_B J^{n-1}/J^{n+1}
\xrightarrow{v}
(\Gamma_A N,0)\otimes_A J^{n-1}/J^{n}.
\]
Let $a$ be an element of $\Gamma_A^q N\otimes_A J^{n-1}/J^n$.
Choose $b$ such that $v(b)=a$, and choose $c$ such that $u(c)=d(b)$.
Then we have $\delta(a)=c$.
Consider the specific element
\[
a:=\gamma_{i_1}(x_1)\cdots \gamma_{i_n}(x_n)\otimes y
\]
with $i_1+\cdots+i_n=q$ and $y\in J^{n-1}/J^n$,
We can take
\[
b:=\gamma_{i_1}(x_1)\cdots \gamma_{i_n}(x_n)\otimes y'
\]
with $y'\in J^{n-1}/J^{n+1}$ such that the image of $y'$ in $J^{n-1}/J^n$ is $y$.
Then we have
\[
d(b)=
(\epsilon_1\gamma_{i_1-1}(x_1) \cdots \gamma_{i_n}(x_n)
+
\cdots
+
\epsilon_n\gamma_{i_1}(x_1) \cdots \gamma_{i_n-1}(x_n)) \otimes y'.
\]
We can take
\begin{equation}
\label{conv.2.3}
c:=\gamma_{i_1-1}(x_1) \cdots \gamma_{i_n}(x_n)\otimes \epsilon_1 y'
+
\cdots
+
\gamma_{i_1}(x_1) \cdots \gamma_{i_n-1}(x_n)) \otimes \epsilon_n y',
\end{equation}
which is equal to $\delta(a)$.

By the discussion around \cite[(7.39)]{Qui},
there exists a unique endomorphism
\[
\delta\colon \Gamma_A N \otimes_A \wedge_A N
\to
\Gamma_A N \otimes_A \wedge_A N
\]
such that $\delta$ is a skew-derivation (i.e., a derivation since $R$ has characteristic $2$), $\delta(\gamma_i(a)\otimes 1)=\gamma_{i-1}(a)\otimes a$, and $\delta(1\otimes a)=0$ for $a\in N$.
Compare this with \eqref{conv.2.3} to see that the square
\begin{equation}
\label{conv.2.4}
\begin{tikzcd}
\Gamma^{q+1}_A N\otimes_A \wedge_A^{n-1}N\ar[d,"\simeq"']\ar[r,"\delta"]&
\Gamma^q_A N\otimes_A \wedge_A^n N\ar[d,"\simeq"]
\\
\Tor_{q+1}^B(A,J^{n-1}/J^n)\ar[r,"\delta"]&
\Tor_q^B(A,J^n/J^{n+1})
\end{tikzcd}
\end{equation}
commutes.

The short exact sequence
\[
0
\to
J^{n-1}/J^n
\to
B/J^n
\to
B/J^{n-1}
\to
0
\]
yields the long exact sequence
\begin{align*}
\cdots
&\xrightarrow{\partial_{n}}
\Tor_{q}^B(A,J^{n-1}/J^n)
\xrightarrow{i_{n}}
\Tor_{q}^B(A,B/J^n)
\xrightarrow{j_{n}}
\Tor_{q}^B(A,B/J^{n-1})
\\
&\xrightarrow{\partial_{n}}
\Tor_{q-1}^B(A,J^{n-1}/J^n)
\xrightarrow{i_{n}}
\cdots.
\end{align*}
We claim that $j_{n}\colon \Tor_{q}^B(A,B/J^n)\to \Tor_{q}^B(A,B/J^{n-1})$ is $0$ for $q\geq 1$.
We proceed by induction on $n$.
The claim is clear if $n=0$.
Assume that the claim holds for $n$.
Apply \cite[\S 4.3.1.7]{MR0491680} to the short exact sequence $0\to 0\to N\to N\to 0$ to see that the sequence
\begin{equation}
\label{conv.2.5}
\cdots
\xrightarrow{\delta}
\Gamma_A^{q+1} N \otimes_A \wedge_A^{n-1} N
\xrightarrow{\delta}
\Gamma_A^q N \otimes_A \wedge_A^n N
\xrightarrow{\delta}
\Gamma_A^{q-1}N\otimes_A \wedge_A^{n+1} N
\xrightarrow{\delta}
\cdots
\end{equation}
is exact.
Together with the commutativity of \eqref{conv.2.4},
we see that the bottom row of the commutative diagram
\[
\begin{tikzcd}
\Tor_{q+2}^B(A,B/J^{n-1})\ar[d,leftarrow,"i_{n-1}"']\ar[rd,"\partial_n"]&
\Tor_{q+1}^B(A,B/J^{n})\ar[d,leftarrow,"i_{n}"']\ar[rd,"\partial_{n+1}"]
\\
\Tor_{q+2}^B(A,J^{n-2}/J^{n-1})\ar[r,"\delta"]&
\Tor_{q+1}^B(A,J^{n-1}/J^n)\ar[r,"\delta"]&
\Tor_{q}^B(A,J^n/J^{n+1})
\end{tikzcd}
\]
is exact.
For $x\in \Tor_{q+1}^B(A,B/J^n)$ such that $x\in \im j_{n+1}$,
we have $\partial_{n+1}x=0$.
By induction, we have $j_{n}=0$,
which implies that $i_{n}$ is surjective.
Hence $x=i_{n}y$ for some $y$.
This implies $\delta y=0$,
so $y=\delta z$ for some $z$.
Then we have $x=i_{n}\partial_n i_{n-1}z$, which is $0$ since $i_n\partial_n=0$.
Hence we have $j_{n+1}=0$.
This completes the induction argument.
It follows that $i_n$ is surjective and $\partial_n$ is injective.

Now we have
\[
\im (\Tor_{q+1}^B(A,J^{n-1}/J^n)\xrightarrow{\delta} \Tor_q^B(A,J^n/J^{n+1}))
\simeq
\Tor_{q+1}^B(A,B/J^n).
\]
Combine this with \eqref{conv.2.2} and \eqref{conv.2.5} to obtain the desired exact sequence.
\end{proof}

\begin{lem}
\label{conv.3}
Let $X\to Y\to Z$ be a fiber sequence of chain complexes.
If $Y$ is $n$-connected, then $X$ is $(n-1)$-connected if and only if $Z$ is $n$-connected.
\end{lem}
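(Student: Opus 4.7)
The plan is to pass to the long exact sequence in homology induced by the fiber sequence $X\to Y\to Z$, namely
\[
\cdots \to H_{i+1}(Z)\to H_i(X)\to H_i(Y)\to H_i(Z)\to H_{i-1}(X)\to \cdots,
\]
and then chase vanishing, using the hypothesis that $H_i(Y)=0$ for all $i\leq n$.

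For the forward direction, suppose $X$ is $(n-1)$-connected, so $H_j(X)=0$ for $j\leq n-1$. For any $i\leq n$, the segment
\[
H_i(Y)\to H_i(Z)\to H_{i-1}(X)
\]
has vanishing outer terms (the left by $n$-connectedness of $Y$, the right because $i-1\leq n-1$), so $H_i(Z)=0$. Thus $Z$ is $n$-connected.

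For the converse, suppose $Z$ is $n$-connected. For $i\leq n-1$, the segment
\[
H_{i+1}(Z)\to H_i(X)\to H_i(Y)
\]
has vanishing outer terms (the right by $n$-connectedness of $Y$, the left because $i+1\leq n$), so $H_i(X)=0$. Thus $X$ is $(n-1)$-connected.

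There is no substantive obstacle here; the statement is essentially the standard three-term lemma for a fiber sequence, and the only thing to get right is bookkeeping of the indices relative to the paper's convention that "$n$-connected" means $H_i=0$ for $i\leq n$ (rather than $i<n$). Both implications then fall out of a single snippet of the long exact sequence, so the argument is a couple of lines.
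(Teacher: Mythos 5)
Your proof is correct and is exactly the argument the paper intends: the paper's proof consists of the single sentence ``Use the long exact sequence for the homology groups,'' and your index bookkeeping (with the paper's convention that $n$-connected means vanishing in degrees $\leq n$) fills in precisely those details.
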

\begin{proof}
Use the long exact sequence for the homology groups.
\end{proof}

\begin{const}
\label{conv.5}
Let $R$ be a commutative ring.
For $A\in \Poly_R$,
consider $B:=A\otimes_R R/2$ and the kernel $J$ of the multiplication map $\mu\colon C:=B\otimes_{B^{(1)}}B\to B$.
We have the ideal $J^n$ of $C$.
We also have the $B$-modules
$\Tor_q^C(B,J^n)$ and $\Tor_q^C(B,C/J^n)$ and the $C$-module $\Tor_q^C(J,J^n)$.
The constructions of $B$, $B^{(1)}$, $C$, $J^n$, $\Tor_q^C(B,J^n)$, $\Tor_q^C(B,C/J^n)$, and $\Tor_q^C(J,J^n)$ are natural in $A$.
Hence we can define these for $A\in \Ani(\Ring_R)$ too by left Kan extension.
We can also define many maps between these e.g.\ $B^{(1)}\to B$ by left Kan extension since $\Fun(X,\cC)$ admits sifted colimits for every simplicial set $X$ and $\infty$-category $\cC$ admitting sifted colimits by \cite[Corollary 5.1.2.3]{HTT}.
\end{const}

\begin{prop}
\label{conv.4}
With the above notation,
if $J$ is $0$-connected,
then $J^n$ is $(n-1)$-connected for every integer $n\geq 1$.
In particular,
the filtration $\cdots \to J^2 \to J\to B$ on $B$ is complete.
\end{prop}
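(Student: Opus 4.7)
The plan is to argue by induction on $n$, with the base case $n=1$ being precisely the hypothesis.

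For the inductive step, I would combine the cofiber sequence $J^{n+1}\to J^n\to J^n/J^{n+1}$ with an explicit identification of $J^n/J^{n+1}$ obtained by left Kan extending the polynomial-case description. In the polynomial case the proof of Lemma \ref{conv.2} exhibits a natural isomorphism $J^n/J^{n+1}\cong \wedge^n_B N$ with $N=\Omega^1_{B/(R/2)}$; left Kan extending both sides from $\Poly_R$ to $\sCRing_R$ turns this into $J^n/J^{n+1}\simeq \wedge^n_B(J/J^2)$, where $\wedge^n_B$ now denotes the derived exterior power and $J/J^2\simeq \L_{B/(R/2)}$ is the left Kan extension of $A\mapsto \Omega^1_{B/(R/2)}$.

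First I would verify that $J/J^2$ is itself $0$-connected. The functor $A\mapsto J^2$ on $\Poly_R$ takes values in abelian groups concentrated in homotopical degree zero, so its left Kan extension to $\sCRing_R$ is connective; the long exact sequence associated to $J^2\to J\to J/J^2$, together with the hypothesis $\pi_0(J)=0$ and the just-established vanishing $\pi_{-1}(J^2)=0$, then forces $\pi_0(J/J^2)=0$. Applying the d\'ecalage identity $\wedge^n(M[1])\simeq \Gamma^n(M)[n]$ to $M=(J/J^2)[-1]$, which is connective by the previous step, I would conclude that the derived exterior power $\wedge^n_B(J/J^2)$ is concentrated in degrees $\geq n$, i.e., is $(n-1)$-connected.

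Feeding the inductive hypothesis and this connectivity bound into the long exact sequence associated to $J^{n+1}\to J^n\to J^n/J^{n+1}$, and invoking Lemma \ref{conv.3}, would then yield the claimed $n$-connectivity of $J^{n+1}$. The main obstacle I anticipate is matching up the connectivity bounds sharply across the long exact sequence: a naive count from Lemma \ref{conv.3} may fall short by a degree at the top, in which case the short exact sequences from Lemma \ref{conv.2} (applied with $q\geq 1$) would be used to control the connecting homomorphism via the divided and exterior powers of $N$, exploiting that $J/J^2$ is $0$-connected to force the relevant $\Tor$-terms to be highly connected. Once the induction closes, the completeness assertion $\lim_n J^n\simeq 0$ follows from the Milnor exact sequence, since $\pi_i(J^n)$ vanishes as soon as $n>i+1$.
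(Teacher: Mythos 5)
Your preparatory steps are fine and agree with the paper: the connectivity of $N:=J/J^2$ follows from the fiber sequence $J^2\to J\to J/J^2$ with $J^2$ connective, and the bound that $\wedge_B^n N$ is $(n-1)$-connected (via d\'ecalage, i.e.\ \cite[Corollary 7.40]{Qui}) is exactly what the paper uses for the graded pieces. The gap is in the inductive step itself. You propose to deduce that $J^{n+1}$ is $n$-connected from the fiber sequence $J^{n+1}\to J^n\to J^n/J^{n+1}$, knowing only that $J^n$ is $(n-1)$-connected (induction) and $J^n/J^{n+1}\simeq \wedge_B^n N$ is $(n-1)$-connected. But the fiber of a map from an $(n-1)$-connected object to an $(n-1)$-connected object is in general only $(n-2)$-connected: the long exact sequence leaves $\pi_{n-1}(J^{n+1})$ as a quotient of $\pi_n(\wedge_B^n N)$ and $\pi_n(J^{n+1})$ sandwiched between $\coker(\pi_{n+1}(J^n)\to\pi_{n+1}(\wedge_B^n N))$ and $\ker(\pi_n(J^n)\to\pi_n(\wedge_B^n N))$. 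So you are short by \emph{two} degrees, not one, and closing the gap would require proving that $\pi_n(J^n)\to\pi_n(\wedge_B^n N)$ is injective and $\pi_{n+1}(J^n)\to\pi_{n+1}(\wedge_B^n N)$ is surjective. Your suggestion to "control the connecting homomorphism" via Lemma \ref{conv.2} is precisely where all the content lies, and it is not carried out; connectivity estimates on the terms of that lemma alone will not produce these injectivity/surjectivity statements.

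The paper (following Quillen's convergence theorem) avoids this by running the connectivity estimate in the direction where it is lossless: it realizes $J^{n+1}$ as the \emph{cofiber} in the sequence $\Tor_1^C(B,J^n)\to \Tor_0^C(J,J^n)\to J^{n+1}$ (the left Kan extension of $0\to\Tor_1^C(B,J^n)\to J\otimes_C J^n\to J^{n+1}\to 0$ on $\Poly_R$). The K\"unneth spectral sequence, fed with the $(n-1)$-connectivity of $\Tor_q^C(J,J^n)$ for $q>0$ (this is where Lemma \ref{conv.2} enters, via the auxiliary claim that $\Tor_q^C(B,C/J^n)$ is $(n-1)$-connected for $q>0$), identifies $H_p\Tor_0^C(J,J^n)$ with $H_p(J\otimes_C^{\L}J^n)$ for $p\le n$; and $J\otimes_C^{\L}J^n$ is $n$-connected simply because $J$ is $0$-connected and $J^n$ is $(n-1)$-connected. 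Then Lemma \ref{conv.3} applied to the displayed sequence, whose middle term is $n$-connected and whose first term is $(n-1)$-connected, gives the $n$-connectivity of the cofiber $J^{n+1}$ with no loss. You would need to restructure your inductive step along these lines for the argument to close.
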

\begin{proof}
Compare this proof with the proof of \cite[Convergence theorem 8.8]{Qui}.
Since $J^2$ is $(-1)$-connected,
Lemma \ref{conv.3} implies that $N:=J/J^2$ is $0$-connected.

We claim that $\Tor_q^C(B,C/J^n)$ is $(n-1)$-connected for $q>0$.
We proceed by induction on $n$.
The claim is clear if $n=0$.
Assume $n>0$.
By left Kan extension,
Lemma \ref{conv.2} yields a fiber sequence
\[
\Tor_{q+1}^C(B,C/J^{n-1})
\to
\Gamma_B^q N \otimes_B^\L \wedge_B^{n-1} N
\to
\Tor_q^C(B,C/J^{n})
\]
for $q\geq 1$.
The induction hypothesis says that $\Tor_{q+1}^C(B,C/J^{n-1})$ is $(n-2)$-connected.
By \cite[Corollary 7.40]{Qui},
$\Gamma_B^q N$ is $0$-connected,
and $\wedge_B^{n-1}N$ is $(n-2)$-connected.
Hence $\Gamma_B^q N \otimes_B^\L \wedge_B^{n-1} N$ is $(n-1)$-connected.
Lemma \ref{conv.3} implies that $\Tor_q^C(B,C/J^n)$ is $(n-1)$-connected.

Hence
$
\Tor_q^C(B,J^n)\simeq \Tor_{q+1}^C(B,C/J^n)$ and $\Tor_q^C(J,J^n)\simeq \Tor_{q+2}^C(B,C/J^n)$
are $(n-1)$-connected for $q>0$.
It follows that in the K\"unneth spectral sequence
\[
E_{pq}^2
:=
H_p\Tor_q^C(J,J^n)
\Rightarrow
H_{p+q}(J\otimes_C^\L J^n),
\]
we have $E_{pq}^2=0$ for $p<n$ and $q>0$.
In particular,
we have an isomorphism
\[
H_p \Tor_0^C(J, J^n)
\to
H_p(J\otimes_C^\L J^n)
\]
for $p\leq n$,
so $\Tor_0^C(J,J^n)$ is $n$-connected if and only if $J\otimes_C^\L J^n$ is $n$-connected.

Now, we proceed by induction on $n$ to show that $J^n$ is $(n-1)$-connected.
By assumption,
the claim holds if $n=1$.
Assume that $J^n$ is $(n-1)$-connected.
This implies that $J\otimes_C^\L J^n$ is $n$-connected,
so $\Tor_0^C(J,J^n)$ is $n$-connected too.
We have the fiber sequence
\[
\Tor_1^C(B,J^n)
\to
\Tor_0^C(J, J^n)
\to
J^{n+1},
\]
which can be first constructed when $A\in \Poly_R$ and extended to the case when $A\in \Ani(\Ring_R)$ by left Kan extension.
Together with Lemma \ref{conv.3},
we see that $J^{n+1}$ is $n$-connected.
\end{proof}

\begin{thm}
\label{geometric.6}
Let $R$ be a commutative ring.
Then for $A\in \Ani(\Ring_R)$ such that the relative Frobenius $(\pi_0(A)/2)^{(1)}\to \pi_0(A)/2$ of $R/2\to \pi_0(A)/2$ is surjective,
there exist natural complete $S^\sigma$-equivariant filtrations on $\HR(A/R)$ and $\HR(A/R;\Z_p)$ whose graded pieces are
\begin{gather*}
\gr^n \HR(A/R)
\simeq
\Sigma^{n\sigma}\iota \wedge_A^n \L_{A/R},
\\
\gr^n \HR(A/R;\Z_p)
\simeq
\Sigma^{n\sigma}\iota (\wedge_A^n \L_{A/R})_p^\wedge
\end{gather*}
with trivial $S^\sigma$-action.
\end{thm}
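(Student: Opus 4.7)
The $S^\sigma$-equivariant filtration together with the identification of its graded pieces has already been established in Proposition~\ref{TPR.11}, so the content of the theorem is the assertion of completeness under the hypothesis on the relative Frobenius. My plan is to verify $\lim_n \Fil_n \HR(A/R) \simeq 0$ by applying the jointly conservative, limit-preserving pair $(i^*,\Phi^{\Z/2})$.

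For the underlying spectrum, Proposition~\ref{geometric.8} identifies $i^*\Fil_n\HR(A/R) \simeq \Fil_n\HH(A/R)$. Since $\L_{A/R}$ is connective for $A\in\sCRing_R$, the derived wedge power $\wedge^k_A\L_{A/R}$ is connective, so $\gr^k\HH(A/R) \simeq \wedge^k_A\L_{A/R}[k]$ is $k$-connective. Inducting along the filtration, $\Fil_n\HH(A/R)$ is $n$-connective, so $\lim_n i^*\Fil_n\HR(A/R)\simeq 0$.

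For the geometric fixed points, I would first recast Proposition~\ref{geometric.5}: for $A\in\Poly_R$ its flatness and $\pi_0$-computation combine into a natural equivalence
\[
\Phi^{\Z/2}\Fil_n\HR(A/R) \;\simeq\; J^n \otimes^{\L}_{R/2} \Phi^{\Z/2}\rH\ul{R}.
\]
Both sides preserve sifted colimits in $A$ (the left because $\Fil_n\HR(-/R)$ is defined by left Kan extension and $\Phi^{\Z/2}$ preserves colimits; the right because $J^n$ and $\otimes^\L$ do), so the equivalence persists for $A\in\sCRing_R$ once $J^n$ is interpreted via the derived left Kan extension of Construction~\ref{conv.5}. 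The hypothesis that $(\pi_0(A)/2)^{(1)}\to \pi_0(A)/2$ is surjective forces the multiplication $\pi_0(B)\otimes_{\pi_0(B^{(1)})}\pi_0(B) \to \pi_0(B)$ to be an isomorphism, so $\pi_0(J)=0$; combined with the connectivity of $J$ (the fiber of a $\pi_0$-surjection between connectives), this makes $J$ a $0$-connected object in the sense of the paper. Proposition~\ref{conv.4} then gives that $J^n$ is $(n-1)$-connected, hence $n$-connective, and since $\Phi^{\Z/2}\rH\ul{R}$ is connective by Proposition~\ref{geometric.1}, the tensor is $n$-connective, yielding $\lim_n \Phi^{\Z/2}\Fil_n\HR(A/R)\simeq 0$.

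For the $p$-completed version, apply the pointwise $S^\sigma$-equivariant $p$-completion from Construction~\ref{comp.7}: as a left adjoint $(-)_p^\wedge$ preserves cofibers, producing the claimed graded pieces $\Sigma^{n\sigma}\iota(\wedge^n_A \L_{A/R})_p^\wedge$; moreover $p$-completion preserves bounded-below connectivity, so the same $n$-connectivity argument applies to $\Fil_n\HR(A/R;\Z_p)$ after $i^*$ and $\Phi^{\Z/2}$, giving completeness. The main obstacle I anticipate is the left Kan extension step on the $\Phi^{\Z/2}$ side: one must carefully check that the derived interpretation of $J$ remains connective and that Proposition~\ref{conv.4}'s connectivity gain on $J^n$ is preserved when $A$ is an arbitrary simplicial commutative $R$-algebra, in a way compatible with the flat-module identification on polynomial inputs.
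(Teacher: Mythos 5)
Your connectivity estimates are the right ones and match the paper's, but the final inference is broken: the pair $(i^*,\Phi^{\Z/2})$ is jointly conservative, yet $\Phi^{\Z/2}$ does \emph{not} preserve (infinite) limits --- it is built from a smash product and preserves colimits, not inverse limits. So knowing $\lim_n i^*\Fil_n\HR(A/R)\simeq 0$ and $\lim_n \Phi^{\Z/2}\Fil_n\HR(A/R)\simeq 0$ does not by itself give $\lim_n\Fil_n\HR(A/R)\simeq 0$. The limit-preserving conservative pair is $(i^*,(-)^{\Z/2})$, and the paper's proof routes through it: from the $(n-1)$-connectivity of $i^*\Fil_n\HR(A/R)$ one gets $(n-1)$-connectivity of the homotopy orbits $(\Fil_n\HR(A/R))_{h\Z/2}$, and then the isotropy separation sequence
\[
(\Fil_n \HR(A/R))_{h\Z/2}
\to
(\Fil_n \HR(A/R))^{\Z/2}
\to
\Phi^{\Z/2}\Fil_n \HR(A/R)
\]
combined with your $(n-1)$-connectivity of $\Phi^{\Z/2}\Fil_n\HR(A/R)$ shows $(\Fil_n\HR(A/R))^{\Z/2}$ is $(n-1)$-connected. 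Hence $\ul{\pi}_i\Fil_n\HR(A/R)=0$ for $i\le n-1$, and the limit of a tower of increasing connectivity vanishes. This step is missing from your write-up and is exactly what the hypothesis-dependent $\Phi^{\Z/2}$-connectivity is \emph{for}.

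A secondary wrinkle: your proposed natural equivalence $\Phi^{\Z/2}\Fil_n\HR(A/R)\simeq J^n\otimes^{\L}_{R/2}\Phi^{\Z/2}\rH\ul{R}$ does not follow formally from flatness plus the $\pi_0$-computation, since there is no evident natural map from the discrete module $\rH J^n$ into the connective spectrum $\Phi^{\Z/2}\Fil_n\HR(A/R)$ splitting the truncation. The paper avoids this by left Kan extending the Postnikov filtration $\tau_{\geq i}\Phi^{\Z/2}\Fil_n\HR(-/R)$ from $\Poly_R$ termwise; Proposition \ref{geometric.5} identifies the graded pieces with shifts of $J^n$ and $\ker(R\xrightarrow{2}R)\otimes_R J^n$, and Proposition \ref{conv.4} then gives that each graded piece of the Kan-extended filtration is $(n+i-1)$-connected, whence $\Phi^{\Z/2}\Fil_n\HR(A/R)$ is $(n-1)$-connected. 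Your identification of $\pi_0(J)=0$ from the surjectivity of the relative Frobenius, and the treatment of $i^*$ via Proposition \ref{geometric.8}, are correct and agree with the paper.
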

\begin{proof}
The part for $S^\sigma$-actions is due to Proposition \ref{TPR.11}.
We need to show that the filtrations on $\HR(A/R)$ and $\HR(A/R;\Z_p)$ in \cite[Theorem 4.32, Corollary 4.40]{HP} are complete.

We have the cofiber sequence
\(
J\to B\otimes_{B^{(1)}}^\L B \xrightarrow{\mu} B
\)
using the notation in Construction \ref{conv.5}.
Observe that we have $B\simeq A\otimes_R^\L R/2$.
Since $\mu$ has a section,
$\pi_n(\mu)$ is surjective for every integer $n$.
Hence we have the short exact sequence
\[
0\to \pi_0(J)\to \pi_0(B\otimes_{B^{(1)}}^\L B) \xrightarrow{\pi_0(\mu)} \pi_0(B)\to 0.
\]
Since $\pi_0(B)\cong \pi_0(A)/2$ and $\pi_0(B^{(1)})\cong (\pi_0(A)/2)^{(1)}$, the assumption that the relative Frobenius $(\pi_0(A)/2)^{(1)}\to \pi_0(A)/2$ is surjective implies that $\pi_0(B^{(1)})\to \pi_0(B)$ is surjective.
Using $\pi_0(B\otimes_{B^{(1)}}^\L B)\simeq \pi_0(B)\otimes_{\pi_0(B^{(1)})}\pi_0(B)$,
we see that $\pi_0(\mu)$ is an isomorphism.
Hence we have $\pi_0(J)=0$, i.e., $J$ is $0$-connected.

For integers $i$ and $n$,
let
\[
\Fil_i \Phi^{\Z/2}\Fil_n \HR(-/R)
\colon
\Ani(\Ring_R)
\to
\Sp
\]
be a left Kan extension of the functor
\[
\tau_{\geq i}\Phi^{\Z/2}\Fil_n \HR(-/R)
\colon
\Poly_R
\to
\Sp.
\]
Since $\Phi^{\Z/2}\Fil_n\HR(A/R)$ is $(-1)$-connected for $A\in \Poly_R$,
we have
\[
\Fil_i \Phi^{\Z/2}\Fil_n\HR(-/R)\simeq \Fil_0 \Phi^{\Z/2}\Fil_n\HR(-/R)
\]
for $i\leq 0$.
Observe that $\Fil_i\Phi^{\Z/2}\Fil_n \HR(-/R)$ is $(i-1)$-connected.
By Propositions \ref{geometric.5} and \ref{conv.4},
$\gr^i \Phi^{\Z/2}\Fil_n \HR(A/R)$ is $(n+i-1)$-connected.
It follows that $\Phi^{\Z/2}\Fil_n \HR(A/R)$ is $(n-1)$-connected.
On the other hand,
$i^*\Fil_n \HR(A/R)$ is $(n-1)$-connected by Proposition \ref{geometric.8},
which implies that the homotopy orbit $(\Fil_n \HR(A/R))_{h\Z/2}$ is $(n-1)$-connected too.
Using the isotropy separation cofiber sequence
\[
(\Fil_n \HR(A/R))_{h\Z/2}
\to
(\Fil_n \HR(A/R))^{\Z/2}
\to
\Phi^{\Z/2}\Fil_n \HR(A/R),
\]
we see that $(\Fil_n \HR(A/R))^{\Z/2}$ is $(n-1)$-connected.
Hence $\Fil_n \HR(A/R)$ is $(n-1)$-connected.
This implies $\lim_n \Fil_n \HR(A/R)\simeq 0$,
i.e.,
the filtration on $\HR(A/R)$ is complete.
\end{proof}

\begin{rmk}
For a map of commutative rings $R\to A$,
the Frobenius $\varphi\colon A/2\to A/2$ factors through the relative Frobenius $(A/2)^{(1)}\to A/2$ of $R/2\to A/2$,
see \eqref{geometric.3.1}.
Hence the condition in Theorem \ref{geometric.6} is satisfied if $A$ is a commutative ring such that the Frobenius $\varphi\colon A/2\to A/2$ is surjective.
This completes the proof of \cite[Theorem 4.41]{HP},
so from now on we can use the results in \cite[\S 5]{HP}.
\end{rmk}

\section{Quasisyntomic sheaves}

We refer to \cite[\S 4]{BMS19} for the notions of quasisyntomic site, quasisyntomic maps, quasisyntomic rings, perfectoid rings, and quasiregular semiperfectoid rings.

Theorem \ref{TPR.6} below is a real refinement of \cite[Corollary 3.4]{BMS19}.
In loc.\ cit., the proof is done by reducing to the case of the cotangent complex using the Hochschild--Kostant--Rosenberg filtration.
We instead directly check the sheaf conditions arguing as in the proof of \cite[Theorem 3.1]{BMS19}.

Recall from \cite[Remark 3.13]{QS22} that we have the $t$-structure on $(\Sp^{\Z/2})^{BS^\sigma}$ given by
\begin{gather*}
(\Sp^{\Z/2}_{\geq 0})^{BS^\sigma}
:=
\Fun_{\Z/2}(B S^\sigma,\ul{\Sp}^{\Z/2}_{\geq 0}),
\\
(\Sp^{\Z/2}_{\leq 0})^{BS^\sigma}
:=
\Fun_{\Z/2}(B S^\sigma,\ul{\Sp}^{\Z/2}_{\leq 0}).
\end{gather*}

\begin{df}
\label{TPR.1}
An infinite tower $\cdots \to X_1\to X_0\to X_{-1}:=0$ of $\Z/2$-spectra is a \emph{weak Postnikov tower} if $\fib(X_n\to X_{n-1})$ is in $\Sigma^n \Sp_{\geq 0}^{\Z/2}$ for $n\geq 0$.
\end{df}

\begin{lem}
\label{TPR.2}
Let $\cdots \to X_1\to X_0$ be a tower of $\Z/2$-spectra with $S^\sigma$-action.
If this is a weak Postnikov tower after forgetting the $S^\sigma$-action,
then the tower $\cdots \to (X_1)_{h S^\sigma}\to (X_0)_{h S^\sigma}$ is a weak Postnikov tower,
and there is a natural equivalence of $\Z/2$-spectra
\[
(\lim_n X_n)_{h S^\sigma}
\simeq
\lim_n (X_n)_{h S^\sigma}.
\]
\end{lem}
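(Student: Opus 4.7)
The plan is as follows. Set $F_n := \fib(X_n \to X_{n-1})$; the hypothesis says $q^* F_n \in \Sigma^n \Sp_{\geq 0}^{\Z/2}$. By the description of the $t$-structure on $(\Sp^{\Z/2})^{BS^\sigma}$ recalled before Definition \ref{TPR.1} and the fact that both evaluation functors on $BS^\sigma$ factor through $q^*$ (compare the proof of Proposition \ref{G-cat.3}), an object $Z$ is $n$-connective in this pointwise $t$-structure exactly when $q^* Z$ is $n$-connective, so $F_n$ itself is $n$-connective.

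The first key step is to observe that $(-)_{hS^\sigma} = j_!$ preserves $n$-connectivity. Indeed, $j^* \colon \Sp^{\Z/2} \to (\Sp^{\Z/2})^{BS^\sigma}$ equips a $\Z/2$-spectrum with the trivial $S^\sigma$-action, and since $q^* j^* \simeq \id$, the functor $j^*$ sends $(n-1)$-coconnective $\Z/2$-spectra to $(n-1)$-coconnective objects of $(\Sp^{\Z/2})^{BS^\sigma}$; by adjunction its left adjoint $j_!$ preserves $n$-connectivity. Applying this to the fiber sequence $F_n \to X_n \to X_{n-1}$ (which remains a fiber sequence after $j_!$, since $j_!$ is exact as a left adjoint between stable $\infty$-categories), the fiber $(F_n)_{hS^\sigma}$ of $(X_n)_{hS^\sigma} \to (X_{n-1})_{hS^\sigma}$ lies in $\Sigma^n \Sp_{\geq 0}^{\Z/2}$. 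This proves that $\cdots \to (X_1)_{hS^\sigma} \to (X_0)_{hS^\sigma}$ is a weak Postnikov tower.

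For the claim about limits, set $L := \lim_n X_n$ and $L_k := \fib(L \to X_{k-1})$, and similarly $M := \lim_n (X_n)_{hS^\sigma}$ and $M_k := \fib(M \to (X_{k-1})_{hS^\sigma})$. A standard connectivity estimate for weak Postnikov towers shows $L_k \in \Sigma^k \Sp_{\geq 0}^{\Z/2}$: writing $L_k$ as the limit of the tower of $\fib(X_n \to X_{k-1})$ for $n \geq k-1$, each of these fibers is built iteratively from the $F_m$ with $m \geq k$ and is therefore $(k-1)$-connective, while the transition maps have fibers $F_n$ of connectivity tending to infinity, so the homotopy groups stabilize in each degree and $L_k$ is $(k-1)$-connective. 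Applying $j_!$ (and using the first step) shows that the fiber of $L_{hS^\sigma} \to (X_{k-1})_{hS^\sigma}$ is $(k-1)$-connective. The same reasoning applied to the weak Postnikov tower $\{(X_n)_{hS^\sigma}\}$ shows that $M_k$ is $(k-1)$-connective. The natural comparison map $L_{hS^\sigma} \to M$, induced by the universal property, is therefore an equivalence on $\ul{\pi}_m$ for all $m \leq k-2$; letting $k \to \infty$ gives the desired equivalence, which is natural in the tower.

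The main obstacle will be verifying cleanly that $j_! = (-)_{hS^\sigma}$ preserves connectivity for the $t$-structure of Remark 3.13 in \cite{QS22}; once this is in hand, the rest is bookkeeping with fiber sequences and the definition of a weak Postnikov tower. No Milnor $\lim^1$ argument is actually needed, since the connectivity estimate for $L_k$ and $M_k$ implies the comparison is an equivalence on each $\ul{\pi}_m$ directly.
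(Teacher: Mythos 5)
Your proof is correct and follows essentially the same route as the paper: the key point in both is that $(-)_{hS^\sigma}=j_!$ is right $t$-exact for the pointwise $t$-structure because its right adjoint $j^*$ (trivial action) preserves coconnectivity, and the limit statement then follows from the standard connectivity estimate on $\fib(\lim_n X_n\to X_k)$. Your comparison of $L_{hS^\sigma}$ and $M$ over $(X_{k-1})_{hS^\sigma}$ is just a slightly more explicit version of the paper's one-line ``apply $\lim_n$'' to the fibers, so there is no substantive difference.
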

\begin{proof}
The forgetful functor $(\Sp^{\Z/2})^{BS^\sigma}\to \Sp^{\Z/2}$ is $t$-exact,
so its left adjoint $(-)_{hS^\sigma}$ is right $t$-exact.
This implies that $\{(X_i)_{h S^\sigma}\}_{i\geq 0}$ is a weak Postnikov tower.

We set $X:=\lim_n X_n$.
We have $\fib(X\to X_n)\in \Sigma^n (\Sp_{\geq 0}^{\Z/2})^{BS^\sigma}$.
Since $(-)_{hS^\sigma}$ is right $t$-exact,
we have $\fib(X_{hS^\sigma}\to (X_n)_{h S^\sigma})\in \Sigma^n \Sp_{\geq 0}^{\Z/2}$.
Apply $\lim_n$ to conclude.
\end{proof}

\begin{lem}
\label{descent.2}
Let $R\to A,B$ be maps of normed $\Z/2$-spectra.
Then there is a natural equivalence of normed $\Z/2$-spectra with $S^\sigma$-action
\[
\HR(A/R)\wedge_{R} B
\simeq
\HR(A\wedge_R B/B).
\]
\end{lem}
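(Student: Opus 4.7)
The plan is to reduce the lemma to the base-change identity
\[
\THR(A\wedge_R B)\simeq \THR(A)\wedge_{\THR(R)}\THR(B)
\]
for $\THR$, after which the conclusion follows by a short formal manipulation. No explicit manipulation of the formula $\THR(A)\simeq A\wedge_{N^{\Z/2}i^*A}A$ should be needed, which keeps the $S^\sigma$-action manifest throughout.

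First I would establish the base-change formula. Recall that $\THR(-)=q_\otimes(-)$, where $q\colon *\to BS^\sigma$ is the inclusion of the basepoint and $q_\otimes\colon \NAlg^{\Z/2}\to (\NAlg^{\Z/2})^{BS^\sigma}$ is the left adjoint supplied by Proposition \ref{G-cat.1}. As a left adjoint between presentable $\infty$-categories, $q_\otimes$ preserves all colimits, in particular pushouts. Since the relative smash product $A\wedge_R B$ is by construction the pushout $A\sqcup_R B$ of $A\leftarrow R\to B$ in $\NAlg^{\Z/2}$, applying $q_\otimes$ yields the natural equivalence
\[
\THR(A\wedge_R B)=q_\otimes(A\sqcup_R B)\simeq q_\otimes A\sqcup_{q_\otimes R}q_\otimes B \simeq \THR(A)\wedge_{\THR(R)}\THR(B)
\]
in $(\NAlg^{\Z/2})^{BS^\sigma}$.

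Next I would assemble the equivalence of the lemma by combining the base-change formula with the definition $\HR(-/-):=\THR(-)\wedge_{\THR(-)}(-)$, computing
\begin{align*}
\HR(A\wedge_R B/B)
&= \THR(A\wedge_R B)\wedge_{\THR(B)} B \\
&\simeq \bigl(\THR(A)\wedge_{\THR(R)}\THR(B)\bigr)\wedge_{\THR(B)} B \\
&\simeq \THR(A)\wedge_{\THR(R)} B \\
&\simeq \bigl(\THR(A)\wedge_{\THR(R)} R\bigr)\wedge_R B \\
&\simeq \HR(A/R)\wedge_R B.
\end{align*}
Each step is natural in the inputs and takes place in $(\NAlg^{\Z/2})^{BS^\sigma}$, so the resulting equivalence automatically respects both the normed structure and the $S^\sigma$-action.

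The main subtlety is the verification of the base-change step at the level of normed $\Z/2$-spectra with $S^\sigma$-action rather than merely after forgetting one of these two structures. However, this is automatic from the existence and left-adjoint nature of $q_\otimes$ in $(\NAlg^{\Z/2})^{BS^\sigma}$, which is guaranteed by Proposition \ref{G-cat.1}; the remaining steps are purely formal manipulations of pushouts in the same $\infty$-category.
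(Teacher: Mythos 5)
Your proof is correct and follows essentially the same route as the paper's: both reduce to the base-change equivalence $\THR(A\wedge_R B)\simeq\THR(A)\wedge_{\THR(R)}\THR(B)$ in $(\NAlg^{\Z/2})^{BS^\sigma}$ and then cancel $\THR(B)$ against $B$. The only difference is that the paper cites \cite[Proposition 2.1.5]{HP23} together with Proposition \ref{G-cat.3} for the base-change step, whereas you rederive it from $q_\otimes$ preserving pushouts; this is fine, but note that the identification of pushouts in $\NAlg^{\Z/2}$ (and in $(\NAlg^{\Z/2})^{BS^\sigma}$) with relative smash products of underlying objects, which you treat as being ``by construction,'' is exactly the nontrivial input those citations supply.
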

\begin{proof}
We have an equivalence of normed $\Z/2$-spectra with $S^\sigma$-action
\[
\THR(A)\wedge_{\THR(R)}B
\simeq
\THR(A)\wedge_{\THR(R)}\THR(B)\wedge_{\THR(B)}B.
\]
By \cite[Proposition 2.1.5]{HP23} and Proposition \ref{G-cat.3},
we have an equivalence of normed $\Z/2$-spectra with $S^\sigma$-action
\[
\THR(A)\wedge_{\THR(R)}\THR(B)
\simeq
\THR(A\wedge_R B).
\]
Combine these two to obtain the desired equivalence.
\end{proof}

\begin{lem}
\label{descent.1}
Let $A$ be a commutative algebra object of $\Sp^{\Z/2}$.
If $M$ is an $A$-module with $S^\sigma$-action and $N$ is an $A$-module with trivial $S^\sigma$-action,
then we have a natural equivalence of $\Z/2$-spectra
\[
(M\wedge_A N)_{hS^\sigma}\simeq M_{hS^\sigma} \wedge_A N,
\]
where $A$ is equipped with with trivial $S^\sigma$-action.
\end{lem}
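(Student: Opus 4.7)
The plan is to reduce the statement to the projection formula for $j_! = (-)_{hS^\sigma}$ already invoked in the proof of Proposition \ref{remind.2}, applied to a bar construction model of the relative smash product. Since $N$ carries the trivial $S^\sigma$-action it lies in the image of $j^*\colon \Sp^{\Z/2}\to (\Sp^{\Z/2})^{BS^\sigma}$, and likewise $A$ does; hence the simplicial diagram computing $M\wedge_A N$ takes the form
\[
M\wedge_A N \simeq \colim_{\Delta^{op}} \bigl( M\wedge j^*(A^{\wedge \bullet}\wedge N)\bigr),
\]
where the faces and degeneracies use the $A$-action on $M$ and $N$ and the multiplication of $A$.

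First, I would apply the projection formula \cite[Lemma 5.44]{QS21} (the same input used for Proposition \ref{remind.2}) in each simplicial degree. This yields natural equivalences
\[
j_!\bigl(M\wedge j^*(A^{\wedge n}\wedge N)\bigr)
\simeq j_! M \wedge A^{\wedge n}\wedge N
= M_{hS^\sigma}\wedge A^{\wedge n}\wedge N,
\]
and by naturality in the second variable these assemble to an equivalence of simplicial $\Z/2$-spectra. Second, since $j_!$ is a left adjoint by Proposition \ref{G-cat.1}, it preserves the colimit over $\Delta^{op}$, so
\[
(M\wedge_A N)_{hS^\sigma}
\simeq \colim_{\Delta^{op}} (M_{hS^\sigma}\wedge A^{\wedge\bullet}\wedge N).
\]
Finally, the right-hand colimit is the bar construction computing $M_{hS^\sigma}\wedge_A N$, where the $A$-module structure on $M_{hS^\sigma}$ is the one induced from $M$ via the projection formula (using that $A$ has trivial $S^\sigma$-action, so $j_!$ is $A$-linear in the natural sense).

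The main technical point, and likely the only subtle one, is to promote the levelwise projection formula equivalences to an equivalence of simplicial objects; this is a naturality check that follows from the fact that the projection formula \cite[Lemma 5.44]{QS21} is natural in the second variable and compatible with the symmetric monoidal structure of $j^*$, so the simplicial maps on both sides are intertwined. Once this is done, passing to the geometric realization gives the claimed equivalence, which is moreover natural in $M$ and $N$.
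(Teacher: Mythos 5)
Your proposal is correct and follows essentially the same route as the paper's proof: resolve $M\wedge_A N$ by the two-sided bar construction, commute $(-)_{hS^\sigma}=j_!$ past the geometric realization since it is a left adjoint, and apply the projection formula of \cite[Lemma 5.44]{QS21} levelwise (the paper also cites \cite[Example 5.18]{QS21} and \cite[\S 4.4.1, Theorem 4.5.2.1(2)]{HA} for the bar resolution). The naturality point you flag is exactly the content the paper leaves implicit in "we can show this using the projection formula."
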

\begin{proof}
By \cite[\S 4.4.1, Theorem 4.5.2.1(2)]{HA},
it suffices to show that the induced map
\[
\colim
\big(
\cdots
\,
\substack{\rightarrow\\[-1em] \rightarrow \\[-1em] \rightarrow}
\,
M\wedge A\wedge N
\,
\substack{\rightarrow\\[-1em] \rightarrow}
\,
M\wedge N
\big)_{hS^\sigma}
\to
\colim
\big(
\cdots
\,
\substack{\rightarrow\\[-1em] \rightarrow \\[-1em] \rightarrow}
\,
M_{hS^\sigma}\wedge A\wedge N
\,
\substack{\rightarrow\\[-1em] \rightarrow}
\,
M_{hS^\sigma}\wedge N
\big)
\]
is an equivalence,
where the colimits run over the two-sided bar constructions.
We can show this using the projection formula \cite[Example 5.19, Lemma 5.45]{QS21}.
\end{proof}

Let $\CRing$ denote the category of commutative rings.
For a commutative ring $R$,
let $\CRing_R$ denote the category of $R$-algebras.
Following \cite[Variant 4.33]{BMS19},
let $\QSyn_R$ denote the category of $R$-algebras $A$ such that $A$ is quasisyntomic.

In the proof of Theorem \ref{TPR.6} below,
we will use the following property \cite[Corollary 6.8.1]{BGS20}:
For a $(-1)$-connected normed $\Z/2$-spectrum $A$ and $(-1)$-connected $A$-modules $M$ and $N$,
the monoidal product $M\wedge_A N$ is $(-1)$-connected,
and we have a natural isomorphism
\[
\ul{\pi}_0(M\wedge_A N)
\simeq
\ul{\pi}_0 M \sotimes_{\ul{\pi}_0 A}\ul{\pi}_0 M.
\]

\begin{thm}
\label{TPR.6}
Let $R$ be a commutative ring.
The presheaves
\[
\HR(-/R),
\;
\HCR^-(-/R),
\;
\HR(-/R)_{hS^\sigma}
\;
\HPR(-/R)
\]
on $\CRing_R^{op}$ are fpqc-sheaves.
The presheaves
\[
\THR(-),
\;
\TCR^-(-),
\;
\THR(-)_{hS^\sigma},
\;
\TPR(-)
\]
on $\CRing^{op}$ are fpqc-sheaves.
The presheaves
\[
\HR(-/R;\Z_p),
\;
\HCR^-(-/R;\Z_p),
\;
\HR(-/R;\Z_p)_{hS^\sigma},
\;
\HPR(-/R;\Z_p)
\]
on $\QSyn_R^{op}$ are quasisyntomic sheaves.
The presheaves
\[
\THR(-;\Z_p),
\;
\TCR^-(-;\Z_p),
\;
\THR(-;\Z_p)_{hS^\sigma},
\;
\TPR(-;\Z_p)
\]
on $\QSyn^{op}$ are quasisyntomic sheaves.
\end{thm}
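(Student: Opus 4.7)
The plan is to follow the strategy of \cite[Theorem 3.1]{BMS19}, first establishing the sheaf property for the basic functors $\HR(-/R)$ and $\THR(-)$ and then deducing the statements for the companions by formal manipulations.

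For the basic cases I would directly check that for any fpqc cover $A\to A'$ in $\CRing_R$, the totalisation of the \v{C}ech cosimplicial object $\HR({A'}^{\otimes_A(\bullet+1)}/R)$ recovers $\HR(A/R)$. The key input is the symmetric monoidality of $\THR$ used in the proof of Lemma \ref{descent.2} via \cite[Proposition 2.1.5]{HP23}, which expresses each term of the \v{C}ech nerve as an iterated relative smash power, reducing the statement to fpqc descent of $\rH\ul{A}$-modules in $\Sp^{\Z/2}$. The latter follows by using the conservative pair $(i^*,(-)^{\Z/2})$ to reduce to the non-equivariant fpqc descent of $\rH A$-modules in $\Sp$, which is standard. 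An entirely parallel argument with $R=\Sphere$ handles $\THR(-)$.

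The companions are then derived as follows. Since $(-)^{hS^\sigma}$ is a right adjoint, it preserves totalisations, so the sheaf property for $\HR$ and $\THR$ transfers to $\HCR^-$ and $\TCR^-$. For the homotopy orbit variants $\HR(-/R)_{hS^\sigma}$ and $\THR(-)_{hS^\sigma}$, the functor $(-)_{hS^\sigma}$ is a left adjoint and does not preserve totalisations in general; here I would invoke Lemma \ref{TPR.2}, after verifying that the partial totalisations of the \v{C}ech cosimplicial object form a weak Postnikov tower. The Tate variants $\HPR$, $\TPR$ then follow from the norm cofibre sequence $\Sigma^\sigma X_{hS^\sigma}\to X^{hS^\sigma}\to X^{tS^\sigma}$ together with the fact that fibre sequences of sheaves are sheaves in the stable target. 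For the $p$-completed statements on $\QSyn_R$ and $\QSyn$, every quasisyntomic cover is fpqc, and Proposition \ref{comp.4} allows commuting $p$-completion past the relevant limits and the functors $(-)_{hS^\sigma}$, $(-)^{hS^\sigma}$, $(-)^{tS^\sigma}$, given that the terms in the \v{C}ech nerve are bounded below.

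I expect the main obstacle to be verifying the weak Postnikov tower condition required by Lemma \ref{TPR.2}. Non-equivariantly this is the standard connectivity estimate for \v{C}ech towers of flat covers, and equivariantly one must separately control the geometric fixed points $\Phi^{\Z/2}$ using the computations of Propositions \ref{geometric.1} and \ref{geometric.5}. Assembling the two connectivity bounds via the isotropy separation cofibre sequence should then yield the required equivariant connectivity of the fibres in the \v{C}ech tower, allowing Lemma \ref{TPR.2} to apply.
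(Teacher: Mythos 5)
Your overall architecture (basic functors first, then companions via $(-)^{hS^\sigma}$ preserving limits and the norm cofibre sequence) matches the paper, but two of your key steps would fail as stated. First, the reduction of ``fpqc descent of $\rH\ul{A}$-modules in $\Sp^{\Z/2}$'' to the non-equivariant case via the conservative pair $(i^*,(-)^{\Z/2})$ does not go through: while $(-)^{\Z/2}$ preserves the totalisation, it is not monoidal and does not commute with $\wedge_{\rH\ul{A}}$, so $(M\wedge_{\rH\ul{A}}\rH\ul{B^m})^{\Z/2}$ is not a term of any recognisable descent diagram over $(\rH\ul{A})^{\Z/2}$, and ``standard'' non-equivariant descent says nothing about it. (The pair $(i^*,\Phi^{\Z/2})$ is monoidal but $\Phi^{\Z/2}$ does not preserve limits, so it cannot be applied to the totalisation without a separate convergence argument either.) This is exactly where the genuinely equivariant work sits: the paper descends to Mackey-functor-valued cochain complexes, uses that $-\sotimes_{\ul{A}}\ul{B}$ detects quasi-isomorphisms for a faithfully flat cover \cite[Lemma 5.1]{HP}, identifies the box product with the smash product via Lazard's theorem and the equivariant flatness of $\rH\ul{A}\to\rH\ul{B}$ \cite[Proposition A.5.11]{HP23}, and then exploits the section of $B\to B\otimes_AB$. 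None of this is subsumed by your reduction.

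Second, your plan for $\HR(-/R)_{hS^\sigma}$ and $\THR(-)_{hS^\sigma}$ --- applying Lemma \ref{TPR.2} ``after verifying that the partial totalisations of the \v{C}ech cosimplicial object form a weak Postnikov tower'' --- cannot be carried out, because that tower is not a weak Postnikov tower: the fibre of $\mathrm{Tot}_n\to\mathrm{Tot}_{n-1}$ is an $n$-fold \emph{desuspension} of the $n$-th conormalised term, so for connective inputs its connectivity decreases with $n$ rather than increasing as Definition \ref{TPR.1} requires. The paper instead runs the descent argument directly for the $S^\sigma$-homotopy orbits, inducting over the Postnikov tower of the module $\HR(A/R)$ and using the right $t$-exactness of $(-)_{hS^\sigma}$ together with the projection formula (Lemma \ref{descent.1}); Lemma \ref{TPR.2} is reserved for the genuine weak Postnikov tower $\THR(-;\Z_p)\wedge_{\THR(\Z)}\tau_{\leq i}\THR(\Z)$ that reduces $\THR$ to $\HR(-/\Z)$. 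Finally, for the $p$-complete statements, Proposition \ref{comp.4} only commutes $p$-completion past $(-)_{hS^\sigma}$, $(-)^{hS^\sigma}$, $(-)^{tS^\sigma}$; it does not let you commute $p$-completion past the \v{C}ech totalisation, and the paper needs a separate pro-system argument comparing $\{A/p^n\}$ with $\{A\otimes_{\Z}^{\L}\Z/p^n\}$ using the bounded $p^\infty$-torsion hypothesis built into quasisyntomicity.
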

\begin{proof}
Let us first show that $\HR(-/R)_{h S^\sigma}$ is a sheaf.
Let $A\to B$ be a faithfully flat map of rings,
and let $B^\bullet$ denote its \v{C}ech nerve.
We need to show that the induced map of $\Z/2$-spectra
\[
\HR(A/R)_{hS^\sigma}
\to
\lim_m \HR(B^m/R)_{hS^\sigma}.
\]
is an equivalence.
Assume that the induced map
\begin{equation}
\label{TPR.6.1}
(M_i)_{hS^\sigma}
\to
\lim_m (\HR(B^m/R)\wedge_{\HR(A/R)}M_i)_{hS^\sigma}
\end{equation}
is an equivalence with $M_i:=\rH  \ul{\pi}_i \HR(A/R)$ for every $i$.
By induction,
the induced map
\[
(\tau_{\leq i} \HR(A/R))_{hS^\sigma}
\to
\lim_m (\HR(B^m/R)\wedge_{\HR(A/R)} \tau_{\leq i}\HR(A/R))_{hS^\sigma}
\]
is an equivalence.
It follows that the induced square
\[
\begin{tikzcd}
\HR(A/R)_{hS^\sigma}\ar[d]\ar[r]&
\lim_m \HR(B^m/R)_{hS^\sigma}\ar[d]
\\
(\tau_{\geq i+1} \HR(A/R))_{hS^\sigma}
\ar[r]&
\lim_m (\HR(B^m/R)\wedge_{\HR(A/R)} \tau_{\geq i+1}\HR(A/R))_{hS^\sigma}
\end{tikzcd}
\]
is cocartesian.
If we apply $\lim_i$ to the lower horizontal arrow,
then we get $0\to 0$ using \cite[Corollary 6.8.1]{BGS20} for the right-hand side since $(-)_{hS^\sigma}$ is right $t$-exact.
Hence the upper horizontal arrow is an equivalence,
so it suffices to show that \eqref{TPR.6.1} is an equivalence.

Assume that the induced map
\begin{equation}
\label{TPR.6.2}
\rH \ul{\pi}_j (M_i)_{hS^\sigma}
\to
\lim_m
\rH \ul{\pi}_j (\HR(B^m/A)\wedge_{\rH \ul{A}}M_i)_{hS^\sigma}
\end{equation}
is an equivalence for every integer $j$.
By induction, the induced map
\[
\tau_{\leq j} (M_i)_{hS^\sigma}
\simeq
\lim_m \tau_{\leq j} (\HR(B^m/A)\wedge_{\rH \ul{A}}M_i)_{hS^\sigma}
\]
is an equivalence.
It follows that the induced square
\[
\begin{tikzcd}
(M_i)_{hS^\sigma}\ar[d]\ar[r]&
\lim_m (\HR(B^m/A)\wedge_{\rH \ul{A}}M_i)_{hS^\sigma}\ar[d]
\\
\tau_{\geq j+1} (M_i)_{hS^\sigma}\ar[r]&
\lim_m \tau_{\geq j+1} (\HR(B^m/A)\wedge_{\rH \ul{A}}M_i)_{hS^\sigma}
\end{tikzcd}
\]
is cocartesian.
If we apply $\lim_j$ to the lower horizontal arrow,
then we get $0\to 0$ using \cite[Corollary 6.8.1]{BGS20} for the right-hand side since $(-)_{hS^\sigma}$ is right $t$-exact.
Hence the upper horizontal arrow is an equivalence,
so it suffices to show that \eqref{TPR.6.2} is an equivalence.

Using the Dold--Kan correspondence for abelian categories \cite[Tag 019G]{stacks},
it suffices to show that the induced map of cochain complexes
\[
\ul{\pi}_j (M_i)_{hS^\sigma}
\to
\ul{\pi}_j (\HR(B^\bullet/A)\wedge_{\rH \ul{A}} M_i)_{hS^\sigma}
\]
is a quasi-isomorphism.
By fpqc-descent and \cite[Lemma 2.17]{HP},
it suffices to show that the induced map of cochain complexes
\[
(\ul{\pi}_j (M_i)_{hS^\sigma})\sotimes_{\ul{A}}\ul{B}
\to
(\ul{\pi}_j (\HR(B^\bullet/A)\wedge_{\rH \ul{A}}M_i)_{hS^\sigma})\sotimes_{\ul{A}}\ul{B}
\]
is a quasi-isomorphism.
Since $A\to B$ is flat,
$B$ is a filtered colimit of free $A$-modules by Lazard's theorem \cite[Tag 058G]{stacks}.
Hence $\rH \ul{A}\to \rH \ul{B}$ is flat in the sense of \cite[Definition A.5.4]{HP23} by \cite[Proposition A.5.11]{HP23},
so it suffices to show that the induced map of cochain complexes
\[
\ul{\pi}_j ((M_i)_{hS^\sigma}\wedge_{\rH \ul{A}}\rH \ul{B})
\to
\ul{\pi}_j ((\HR(B^\bullet/A)\wedge_{\rH \ul{A}}M_i)_{hS^\sigma}\wedge_{\rH \ul{A}}\rH \ul{B})
\]
is a quasi-isomorphism.
By Lemma \ref{descent.2} and \ref{descent.1},
it suffices to show that the induced map of cochain complexes
\[
\ul{\pi}_j (M_i\wedge_{\rH \ul{A}}\rH \ul{B})_{hS^\sigma}
\to
\ul{\pi}_j (M_i\wedge_{\rH \ul{A}}\HR(C^\bullet/B))_{hS^\sigma}
\]
is a quasi-isomorphism with $C^\bullet:=B^\bullet\otimes_{A}B$.

The diagonal map $B\to B\otimes_A B$ has a section,
so its \v{C}ech nerve $B\to C^\bullet$ is a cosimplicial homotopy equivalence of $B$-algebras.
This means that for every Mackey functor valued functor $F$ on $B$-algebras,
the induced map of cochain complexes $F(B)\to F(C^\bullet)$ is a quasi-isomorphism.
Apply this to $F:=\ul{\pi}_j (M_i\wedge_{\rH \ul{A}} \HR(-/B))_{hS^\sigma}$ to finish the proof that $\HR(-/A)_{hS^\sigma}$ is a sheaf.

Let us spell out the extra argument for showing that $\HR(-/R;\Z_p)_{hS^\sigma}$ is a sheaf,
see also \cite[Remark 4.9]{BMS19}.
Let $A\to B$ be a quasisyntomic cover in $\QSyn_R$.
Arguing as above,
we reduce to showing that the induced map of pro-cochain complexes 
\[
\{
\ul{\pi}_j (M_i\wedge_{\rH \ul{\Z}}\rH \ul{\Z/p^n})_{hS^\sigma}
\}_n
\to
\{
\ul{\pi}_j (\HR(B^\bullet/A)\wedge_{\rH \ul{A}} M_i\wedge_{\rH \ul{\Z}}\rH \ul{\Z/p^n})_{hS^\sigma}
\}_n
\]
is a pro-quasi-isomorphism since this will imply that the map becomes a quasi-isomorphism after the taking homotopy limits over $n$.
Since $\{A/p^n\}_n$ and $\{A\otimes_{\Z}^\L\Z/p^n\}_n$ are pro-quasi-isomorphic by the assumption that $A$ has bounded $p^\infty$-torsion,
it suffices to show that the induced map of pro-cochain complexes 
\begin{align*}
&\{
\ul{\pi}_j ((M_i\wedge_{\rH \ul{\Z}}\rH \ul{\Z/p^n})\wedge_{A_n} \rH \ul{A/p^n})_{hS^\sigma}
\}_n
\\
\to &
\{
\ul{\pi}_j ((\HR(B^\bullet/A)\wedge_{\rH \ul{A}} M_i\wedge_{\rH \ul{\Z}}\rH \ul{\Z/p^n})\wedge_{A_n} \rH \ul{A/p^n})_{hS^\sigma}
\}_n
\end{align*}
is a pro-quasi-isomorphism with $A_n:=\rH \ul{A}\wedge_{\rH \ul{\Z}}\rH \ul{\Z/p^n}$.
By fpqc-descent and \cite[Lemma 2.17]{HP},
it suffices to show that the induced map of pro-cochain complexes
\begin{align*}
&\{
\ul{\pi}_j ((M_i\wedge_{\rH \ul{\Z}}\rH \ul{\Z/p^n})\wedge_{A_n} \rH \ul{A/p^n})_{hS^\sigma}\sotimes_{\ul{A/p^n}}\ul{B/p^n}
\}_n
\\
\to &
\{
\ul{\pi}_j ((\HR(B^\bullet/A)\wedge_{\rH \ul{A}} M_i\wedge_{\rH \ul{\Z}}\rH \ul{\Z/p^n})\wedge_{A_n} \rH \ul{A/p^n})_{hS^\sigma}\sotimes_{\ul{A/p^n}}\ul{B/p^n}
\}_n
\end{align*}
is a pro-quasi-isomorphism since $A/p^n\to B/p^n$ is faithfully flat.
By Lemma \ref{descent.1} and the flatness of $A/p^n\to B/p^n$,
it suffices to show that the induced map of cochain complexes
\begin{align*}
&\{
\ul{\pi}_j ((M_i\wedge_{\rH \ul{\Z}}\rH \ul{\Z/p^n})\wedge_{A_n} \rH \ul{B/p^n})_{hS^\sigma}
\}_n
\\
\to &
\{
\ul{\pi}_j ((\HR(B^\bullet/A)\wedge_{\rH \ul{A}} M_i\wedge_{\rH \ul{\Z}}\rH \ul{\Z/p^n})\wedge_{A_n} \rH \ul{B/p^n})_{hS^\sigma}
\}_n
\end{align*}
is a pro-quasi-isomorphism.
Since $\{B/p^n\}_n$ and $\{B\otimes_{\Z}^\L\Z/p^n\}_n$ are pro-quasi-isomorphic by the assumption that $B$ has bounded $p^\infty$-torsion,
by Lemma \ref{descent.2},
it suffices to show that the induced map of cochain complexes
\[
\{
\ul{\pi}_j (M_i\wedge_{\rH \ul{A}}\rH \ul{B}\wedge_{\rH \ul{\Z}}\rH \ul{\Z/p^n})_{hS^\sigma}
\}_n
\to 
\{
\ul{\pi}_j (M_i\wedge_{\rH \ul{A}} \HR(C^\bullet/B)\wedge_{\rH \ul{\Z}}\rH \ul{\Z/p^n})_{hS^\sigma}
\}_n
\]
is a pro-quasi-isomorphism  with $C^\bullet:=B^\bullet\otimes_{A}B$.
Arguing as in the remaining part of the proof that $\HR_{hS^\sigma}$ is a sheaf,
we can show that $\HR(-;\Z_p)_{hS^\sigma}$ is a sheaf.

We can similarly show that $\HR(-/R)$ and $\HR(-/R;\Z_p)$ are sheaves.

We claim that $\THR(-;\Z_p)_{hS^\sigma}$ is a sheaf.
Since $\THR(A;\Z_p)$ is $(-1)$-connected for every commutative ring $A$ by \cite[Proposition 4.13]{HP},
we see that
$\THR(A;\Z_p)\wedge_{\THR(\Z)}\tau_{\leq i} \THR(\Z)$ is a weak Postnikov tower.
By Lemma \ref{TPR.2},
it suffices to show that $(\THR(-;\Z_p)\wedge_{\THR(\Z)}\tau_{\leq i}\THR(\Z))_{hS^\sigma}$ is a sheaf for every integer $i$.
Using induction,
it suffices to show that $(\THR(-;\Z_p)\wedge_{\THR(\Z)}\ul{\pi}_i\THR(\Z))_{hS^\sigma}$ is a sheaf.
Together with \cite[Lemma 4.14, Construction 4.16, Proposition 4.25]{HP},
it suffices to show that
$\HR(-/\Z;\Z_p)_{hS^\sigma}\simeq (\THR(-;\Z_p)\wedge_{\THR(\Z)}\rH \ul{\Z})_{hS^\sigma}$ is a sheaf,
which is already proven above.

We can similarly show that $\THR$, $\THR(-;\Z_p)$, and $\THR_{hS^\sigma}$ are sheaves.

The remaining presheaves are sheaves since $(-)^{hS^\sigma}$ preserves limits and we have the cofiber sequence $\Sigma^\sigma (-)_{hS^\sigma}\to (-)^{hS^\sigma}\to (-)^{tS^\sigma}$.
\end{proof}

Using that real Hochschild homology is a sheaf,
we can show that the real Hochschild--Kostant--Rosenberg filtration is complete for the quasisyntomic case as follows.

\begin{thm}
\label{descent.3}
Let $R\to A$ be a map of commutative rings.
If $R$ is perfectoid and $A$ is quasisyntomic,
then there exists a natural complete $S^\sigma$-equivariant filtration on $\HR(A/R;\Z_p)$ whose graded pieces are
\[
\gr^n \HR(A/R;\Z_p)
\simeq
\Sigma^{n\sigma}\iota (\wedge_A^n \L_{A/R})_p^\wedge
\]
with trivial $S^\sigma$-action.
\end{thm}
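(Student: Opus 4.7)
The strategy is to reduce to the quasiregular semiperfectoid case, where Theorem \ref{geometric.6} applies, and then descend along a quasisyntomic cover. First, I would choose a quasisyntomic cover $A \to S^0$ with $S^0$ a quasiregular semiperfectoid $R$-algebra, as afforded by \cite[\S 4]{BMS19}, and form its $p$-completed \v{C}ech nerve $S^\bullet$. Each $S^m$ is again a quasiregular semiperfectoid $R$-algebra (this class being closed under the tensor products appearing in the \v{C}ech nerve over the perfectoid base $R$), so $S^m/2$ is a quotient of a perfect $\F_2$-algebra and in particular has surjective Frobenius. By the remark following Theorem \ref{geometric.6}, this yields a natural complete $S^\sigma$-equivariant filtration on each $\HR(S^m/R;\Z_p)$ with graded pieces $\Sigma^{n\sigma} \iota (\wedge_{S^m}^n \L_{S^m/R})_p^\wedge$ carrying trivial $S^\sigma$-action.

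Next, I would define
\[
\Fil_n \HR(A/R;\Z_p) := \lim_{[m] \in \Delta} \Fil_n \HR(S^m/R;\Z_p)
\]
in filtered $\Z/2$-spectra with $S^\sigma$-action. Since $\HR(-/R;\Z_p)$ is a quasisyntomic sheaf by Theorem \ref{TPR.6}, one has $\Fil_0 \HR(A/R;\Z_p) \simeq \HR(A/R;\Z_p)$. Completeness follows by interchanging limits, using termwise completeness:
\[
\lim_n \Fil_n \HR(A/R;\Z_p) \simeq \lim_{[m] \in \Delta}\lim_n \Fil_n \HR(S^m/R;\Z_p) \simeq 0.
\]
Since fiber sequences in stable $\infty$-categories are preserved by limits,
\[
\gr^n \HR(A/R;\Z_p) \simeq \Sigma^{n\sigma} \lim_{[m] \in \Delta} \iota (\wedge_{S^m}^n \L_{S^m/R})_p^\wedge,
\]
and the $S^\sigma$-action on this limit is trivial because it is termwise trivial. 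Independence of the cover and naturality in $A$ follow from cofinality among quasiregular semiperfectoid covers refining a given one.

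The chief obstacle is then to identify $\lim_{[m]} \iota (\wedge_{S^m}^n \L_{S^m/R})_p^\wedge$ with $\iota (\wedge_A^n \L_{A/R})_p^\wedge$. This splits into two ingredients: first, quasisyntomic descent for the $p$-completed derived wedge powers of the cotangent complex, a standard consequence of \cite[Theorem 3.1, Remark 4.9]{BMS19} giving the descent equivalence in $\Sp$; and second, the compatibility of $\iota$ with the cosimplicial limit. The latter is the subtle point because $\iota$ is only a left adjoint and need not preserve arbitrary limits. I would dispatch it by testing the comparison map against the conservative, limit-preserving pair $(i^*, (-)^{\Z/2})$: on the diagram in question, which is $p$-complete and (after a uniform shift) eventually connective, one can reduce via Postnikov towers to the case of Eilenberg--MacLane objects, where $i^* \iota$ and $(\iota(-))^{\Z/2}$ admit explicit descriptions and the interchange becomes transparent. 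Once this verification is in hand, the theorem follows.
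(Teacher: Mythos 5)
Your proposal is correct and follows essentially the same route as the paper: reduce completeness to the quasiregular semiperfectoid case by quasisyntomic descent and invoke Theorem \ref{geometric.6} there. The one organizational difference is that the paper does not rebuild the filtration by totalizing over a \v{C}ech nerve: the filtration already exists naturally on all simplicial commutative $R$-algebras with the stated graded pieces (Proposition \ref{TPR.11} and \cite[Theorem 4.30, Corollary 4.39]{HP}), so the paper only needs to check completeness, which it does by observing that each $\Fil_n \HR(-/R;\Z_p)$ is a quasisyntomic sheaf (an iterated extension of the sheaves $\HR(-/R;\Z_p)$ and $\gr^m\HR(-/R;\Z_p)$, the latter via \cite[Remark 4.9]{BMS19}) and then commuting $\lim_n$ past $\lim_{[m]\in\Delta}$ exactly as you do. This spares the paper your extra obligations --- independence of the cover, naturality, and the re-identification of the graded pieces after totalization --- all of which collapse into the single assertion that $\gr^n\HR(-/R;\Z_p)$ is a sheaf; that assertion is the same interchange of $\iota$ with the cosimplicial limit that you rightly flag as the delicate point, so no work is truly avoided, only repackaged. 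One caveat on your sketch of that interchange: $\iota$ of an Eilenberg--MacLane spectrum is not an equivariant Eilenberg--MacLane spectrum (its genuine fixed points contain the homotopy orbits $(-)_{h\Z/2}$ up to extension, by isotropy separation and $\Phi^{\Z/2}\iota\simeq\id$), so the reduction is really to commuting $-\wedge (B(\Z/2))_+$ with the totalization, which needs the cohomological boundedness of the descent diagram for $(\wedge^n\L_{-/R})_p^\wedge$ on the quasiregular semiperfectoid basis rather than uniform connectivity of the terms alone.
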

\begin{proof}
By \cite[Remark 4.9]{BMS19} and Theorem \ref{TPR.6},
$\gr^n \HR(-/R;\Z_p)$ and $\HR(-/R;\Z_p)$ are quasisyntomic sheaves.
Hence $\Fil_n\HR(-/R;\Z_p)$ is a quasisyntomic sheaf.
By \cite[Proposition 4.31, Variant 4.33]{BMS19},
it suffices to show that the filtration on $\HR(S/R;\Z_p)$ is complete for every semiperfectoid quasiregular $R$-algebra $S$.
This follows from \cite[Lemma 4.25]{BMS19} and Theorem \ref{geometric.6}.
\end{proof}

\section{\texorpdfstring{$\THR$}{THR} of quasiregular semiperfectoid rings}

In this section,
we show that $\THR(S;\Z_p)$ is strongly even for every quasiregular semiperfectoid ring $S$.

\begin{prop}
\label{THR.7}
Let $R\to S$ be a map of commutative rings,
where $R$ is perfectoid,
and $S$ is quasiregular semiperfectoid.
Then $\HR(S/R;\Z_p)$ is strongly even.
\end{prop}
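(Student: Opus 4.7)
The plan is to apply Theorem \ref{descent.3} to obtain a complete filtration and then verify the very evenness of each graded piece individually. Concretely, Theorem \ref{descent.3} yields a natural complete $S^\sigma$-equivariant filtration on $\HR(S/R;\Z_p)$ whose $n$-th graded piece is $\Sigma^{n\sigma}\iota(\wedge_S^n \L_{S/R})_p^\wedge$, endowed with trivial $S^\sigma$-action. Forgetting the $S^\sigma$-action gives a complete filtration of $\Z/2$-spectra, so by Lemma \ref{THR.5} it suffices to prove each graded piece is very even.

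The key arithmetic input from \cite{BMS19} is that, for $S$ quasiregular semiperfectoid over the perfectoid ring $R$, the $p$-completed shifted cotangent complex $(\L_{S/R}[-1])_p^\wedge$ is a flat $S$-module. Consequently $(\wedge_S^n\L_{S/R})_p^\wedge$ is concentrated in homological degree $n$, and we may write it as $\Sigma^n \rH N_n$ for a $p$-complete $S$-module $N_n$ (the $p$-completion of the divided power $\Gamma_S^n(\L_{S/R}[-1])$). Since $\iota$ is left adjoint to the $t$-exact fixed point functor, $\iota$ is itself $t$-exact and so sends the discrete spectrum $\rH N_n$ to a discrete $\Z/2$-spectrum. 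At the abelian level, the left adjoint of the evaluation $\mathrm{Mack}_{\Z/2}\to \mathrm{Ab}$, $M\mapsto M(\pt)$, sends an abelian group $A$ to the constant Mackey functor $\ul{A}$; this is forced by the Mackey relation $\res\circ\tr = 1+w$. Thus $\iota \rH N_n \simeq \rH \ul{N_n}$, and the graded piece acquires the form $\Sigma^{n+n\sigma}\rH \ul{N_n}$.

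It remains to verify that $\Sigma^{n+n\sigma}\rH \ul{N_n}$ is very even. Its underlying spectrum is $\Sigma^{2n}\rH N_n$, concentrated in degree $2n$, so it is even in the sense of Definition \ref{def:even}. Using Proposition \ref{slice.4}, for any integer $m$ one has $\rho_{2m}(\Sigma^{n+n\sigma}\rH \ul{N_n}) \simeq \ul{\pi}_{(m-n)+(m-n)\sigma}(\rH \ul{N_n})$. An induction using the cofiber sequences $\Sigma^\infty(\Z/2)_+ \to \Sphere \to \Sigma^\sigma\Sphere$ and $\Sigma^{-\sigma}\Sphere \to \Sphere \to \Sigma^\infty(\Z/2)_+$, exactly as in the proof of Lemma \ref{THR.2}, shows that $\ul{\pi}_{k+k\sigma}(\rH \ul{N_n})$ vanishes for $k\ne 0$ and equals $\ul{N_n}$ for $k=0$. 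Hence each $\rho_{2m}$ is a constant Mackey functor, proving very evenness of the graded piece.

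An application of Lemma \ref{THR.5} then delivers the very evenness of $\HR(S/R;\Z_p)$. The main technical ingredients are the flatness of $(\L_{S/R}[-1])_p^\wedge$ and the identification of $\iota$ on discrete spectra; both reduce to formal consequences of the material already developed in the excerpt, though the equivariant bookkeeping in the slice formalism is the most delicate step to execute carefully.
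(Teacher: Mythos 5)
Your proof follows essentially the same route as the paper's: invoke Theorem \ref{descent.3} for the complete filtration, identify each graded piece with $\Sigma^{n+n\sigma}\rH\ul{N_n}$ using the fact that $(\wedge_S^n\L_{S/R})_p^\wedge$ is concentrated in degree $n$ (the paper cites \cite[Lemma 5.14(1)]{BMS19} for exactly this), check that such a spectrum is very even, and conclude with Lemma \ref{THR.5}. The extra detail you supply on the very evenness of $\Sigma^{n+n\sigma}\rH\ul{N_n}$ via Proposition \ref{slice.4} and the connectivity estimates is correct and is simply left implicit in the paper.

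One caveat: your justification of the identification $\iota\rH N_n\simeq\rH\ul{N_n}$ is not right as stated, even though the identification itself is the one the paper uses (it is inherited from \cite{HP}). First, a left adjoint of a $t$-exact functor is only \emph{right} $t$-exact in general, so "$\iota$ is $t$-exact because $(-)^{\Z/2}$ is" does not follow. Second, the left adjoint to the plain evaluation $\Mack_{\Z/2}\to\mathrm{Ab}$, $M\mapsto M(\pt)$, is corepresented by the Burnside Mackey functor, so it sends $A$ to $A\otimes\ul{\cA}$ (with value $A^{\oplus 2}$ at $\pt$), not to the constant Mackey functor $\ul{A}$; likewise, the left adjoint to genuine fixed points on all of $\Sp^{\Z/2}$ sends $\rH\Z$ to $\rH\Z\wedge\Sphere$, whose genuine fixed points carry $H_*(B(\Z/2);\Z)$ in positive degrees, so it is not $\rH\ul{\Z}$. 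The identification you want holds for the \emph{relative} version of $\iota$ used in \cite[Theorem 4.30]{HP} (the left adjoint internal to modules over $\rH\ul{R}$, i.e.\ base change along $\rH R\to\rH\ul{R}$), and this is what the graded pieces of Theorem \ref{descent.3} actually are; you should either cite that or at least not derive it from the two incorrect general principles above. With that repaired, the argument is complete and coincides with the paper's.
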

\begin{proof}
By Theorem \ref{descent.3},
we have a natural complete filtration on $\HR(S/R;\Z_p)$ whose $n$th graded piece is
\(
\gr^n \HR(S/R;\Z_p)
\simeq
\Sigma^{n\sigma} \iota (\wedge_{S}^n  \L_{S/R})_p^\wedge
\)
for every integer $n$.
Using \cite[Lemma 5.14(1)]{BMS19},
$(\wedge_{A}^n  \L_{S/R})_p^\wedge$ is equivalent to $\Sigma^n \rH M_n$ in $\Sp$ for some $S$-module $M_n$.
Then $\gr^n \HR(S/R;\Z_p)$ is equivalent to $\Sigma^{n+n\sigma} \rH \ul{M_n}$ in $\Sp^{\Z/2}$,
which is strongly even.
Lemma \ref{THR.5} finishes the proof.
\end{proof}

The next result is an equivariant refinement of \cite[Theorem 7.1(1)]{BMS19}.

\begin{thm}
\label{THR.6}
Let $S$ be a quasiregular semiperfectoid ring.
Then $\THR(S;\Z_p)$ is strongly even.
Hence for every integer $n$,
there is a natural isomorphism
\[
\rho_{2n}\THR(S;\Z_p)
\simeq
\ul{\pi_{2n}\THH(S;\Z_p)}.
\]
\end{thm}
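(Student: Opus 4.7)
The plan is to bootstrap the very evenness of $\HR(S/R;\Z_p)$ (Proposition \ref{THR.7}) up to that of $\THR(S;\Z_p)$ by base-changing along a perfectoid map $R \to S$, paralleling the BMS proof of \cite[Theorem 7.1(1)]{BMS19}.

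Since $S$ is quasiregular semiperfectoid, I would choose a perfectoid ring $R$ with a surjection $R \to S$. Unwinding $\HR(S/R) = \THR(S) \wedge_{\THR(R)} R$ together with the collapse $R \wedge_R \THR(R) \simeq \THR(R)$, and then $p$-completing, produces a natural equivalence of $\Z/2$-spectra
\[
\THR(S;\Z_p) \simeq \HR(S/R;\Z_p) \wedge_{\rH \ul{R}} \THR(R;\Z_p).
\]
By the perfectoid computation of \cite[Theorem 5.16]{HP}---the equivariant refinement of B\"okstedt's $\pi_*\THH(R;\Z_p) \simeq R[u]$ with $|u|=2$---$\THR(R;\Z_p)$ carries a natural complete $\rH \ul{R}$-module filtration whose $n$th graded piece is $\Sigma^{n+n\sigma}\rH \ul{R}$ for $n \geq 0$.

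Base-changing along $\rH \ul{R} \to \HR(S/R;\Z_p)$ would then transport this to a natural filtration on $\THR(S;\Z_p)$ whose $n$th graded piece is $\Sigma^{n+n\sigma}\HR(S/R;\Z_p)$. Each such piece is very even: $\HR(S/R;\Z_p)$ is very even by Proposition \ref{THR.7}, and a direct check from Definition \ref{def:even} shows that $\Sigma^{n+n\sigma}$ preserves very evenness, merely relabeling $\rho_{2m}$ to $\rho_{2(m-n)}$. Lemma \ref{THR.5} then promotes very evenness of the graded pieces to very evenness of the total object $\THR(S;\Z_p)$, and the stated formula $\rho_{2n}\THR(S;\Z_p) \simeq \ul{\pi_{2n}\THH(S;\Z_p)}$ follows immediately from \eqref{eq:very even} together with $i^*\THR(S;\Z_p) \simeq \THH(S;\Z_p)$.

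The main technical obstacle is verifying the completeness of the base-changed filtration. Since $\HR(S/R;\Z_p)$ is $(-1)$-connected (indeed very even), each piece $\Sigma^{n+n\sigma}\HR(S/R;\Z_p)$ becomes increasingly connected in $n$, and a Milnor-type exact sequence analysis on equivariant homotopy groups---entirely parallel to the argument used in the proof of Lemma \ref{THR.5}---should close the gap.
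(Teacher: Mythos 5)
Your overall architecture is the right one and matches the paper's: filter $\THR(S;\Z_p)$ so that the graded pieces are $\Sigma^{n+n\sigma}\HR(S/R;\Z_p)$, invoke Proposition \ref{THR.7} and the shift-invariance of very evenness, check completeness by connectivity, and conclude with Lemma \ref{THR.5} and \eqref{eq:very even}. But the specific mechanism you propose rests on an invalid cancellation. From $\HR(S/R)=\THR(S)\wedge_{\THR(R)}\rH\ul{R}$ one gets
\[
\HR(S/R)\wedge_{\rH\ul{R}}\THR(R)\;\simeq\;\THR(S)\wedge_{\THR(R)}\bigl(\rH\ul{R}\wedge_{\rH\ul{R}}\THR(R)\bigr),
\]
and the inner factor is $\THR(R)$ \emph{with its $\THR(R)$-module structure restricted along the composite} $\THR(R)\to\rH\ul{R}\to\THR(R)$ (augmentation followed by unit), which is not the identity. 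So this expression is the base change of $\THR(S)$ along a ring endomorphism that kills the positive-degree part of $\THR(R)$, not $\THR(S)\wedge_{\THR(R)}\THR(R)$. Concretely, your claimed equivalence would force $\pi_*\THH(S;\Z_p)$ to be a free module over $\pi_*\THH(R;\Z_p)\cong R[u]$ on $\pi_*\HH(S/R;\Z_p)$, i.e.\ it would split the finite filtration of \cite[Theorem 7.1(3)]{BMS19} with graded pieces $\Gamma_S^j(\pi_1\L_{S/R})_p^\wedge$; that splitting is exactly what BMS avoid assuming, and there is no natural map in either direction between $\HR(S/R;\Z_p)\wedge_{\rH\ul{R}}\THR(R;\Z_p)$ and $\THR(S;\Z_p)$ to begin with.

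The repair is to base-change the filtration of $\THR(R;\Z_p)$ along $\THR(R;\Z_p)\to\THR(S;\Z_p)$ rather than base-changing $\THR(R;\Z_p)$ along $\rH\ul{R}\to\HR(S/R;\Z_p)$: set $\Fil_n:=\THR(S;\Z_p)\wedge_{\THR(R;\Z_p)}P_{2n}\THR(R;\Z_p)$, using the double-speed slice filtration of the perfectoid computation \cite[Theorem 5.16]{HP}. This is the paper's filtration; its graded pieces are $\THR(S;\Z_p)\wedge_{\THR(R;\Z_p)}\Sigma^{n+n\sigma}\rH\ul{R}\simeq\Sigma^{n+n\sigma}\HR(S/R;\Z_p)$ with no splitting needed, completeness holds by \cite[Lemma 4.21]{HP} (your connectivity heuristic is the right reason), and the remainder of your argument then goes through verbatim.
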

\begin{proof}
There exists a perfectoid ring $R$ with a map of commutative rings $R\to S$ by definition.
Consider the filtration
\[
\Fil_n \THR(S;\Z_p):=\THR(S;\Z_p)\wedge_{\THR(R;\Z_p)}P_{2n} \THR(R;\Z_p)
\]
on $\THR(S;\Z_p)$,
where $n$ is an integer.
We have equivalences of $\Z/2$-spectra
\[
\gr^n \THR(S;\Z_p)
\simeq
\THR(S;\Z_p)\wedge_{\THR(R;\Z_p)}\Sigma^{n+n\sigma} \rH \ul{R}
\simeq
\Sigma^{n+n\sigma}\HR(S/R;\Z_p),
\]
where the first one follows from \cite[Theorem 5.15]{HP}, and the second one follows from \cite[Theorem 5.16]{HP}.
Proposition \ref{THR.7} implies that $\gr^n \THR(S;\Z_p)$ is strongly even.
By \cite[Lemma 4.22]{HP},
the filtration $\Fil_n\THR(S;\Z_p)$ is complete.
Lemma \ref{THR.5} implies that $\THR(S;\Z_p)$ is strongly even.
Now the desired isomorphism follows from \eqref{eq:very even}.
\end{proof}

\begin{cor}\label{moreonqrsperfd}
Let $R\to S$ be a map of commutative rings,
where $R$ is perfectoid,
and $S$ is quasiregular semiperfectoid.
\begin{enumerate}
\item[\textup{(1)}]
For every integer $i$,
the morphism
\[
\rho_{2i-2}\THR(S;\Z_p)
\to
\rho_{2i}\THR(S;\Z_p)
\]
given by multiplying a generator $\tilde{u}\in \rho_2\THR(R;\Z_p)$ is a monomorphism of Mackey functors.
\item[\textup{(2)}]
Let $\rho_{\infty}\THR(S;\Z_p):=\colim_i \rho_{2i}\THR(S;\Z_p)$ be the colimit of multiplication by $\tilde{u}$,
which we view as an increasingly filtered commutative $\ul{R}$-algebra.
We set $M:=\pi_1(\L_{S/R})_p^\wedge$.
Then there is a natural isomorphism of graded Green functors
\[
\ul{(\Gamma_S M)_p^\wedge}
\cong
\gr_* \rho_{\infty}\THR(S;\Z_p).
\]
In particular,
$\rho_{2i}\THR(S;\Z_p)$ admits a finite filtration for every integer $i$ whose $j$th graded pieces are given by $\ul{(\Gamma_S^* M)_p^\wedge}$ for $0\leq j\leq i$.
\end{enumerate}
\end{cor}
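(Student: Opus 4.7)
The plan is to use the complete filtration \(\Fil_n\THR(S;\Z_p):=\THR(S;\Z_p)\wedge_{\THR(R;\Z_p)}P_{2n}\THR(R;\Z_p)\) introduced in the proof of Theorem \ref{THR.6}, whose graded pieces \(\gr^n\simeq \Sigma^{n+n\sigma}\HR(S/R;\Z_p)\) are very even by Proposition \ref{THR.7}. Since \(S\) is quasiregular semiperfectoid over the perfectoid ring \(R\), we have \(\L_{S/R}\simeq M[1]\) with \(M\) a flat \(S\)-module, so \((\wedge_S^n\L_{S/R})_p^\wedge\simeq (\Gamma_S^n M)_p^\wedge[n]\). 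Combining this with the HKR filtration from Theorem \ref{descent.3} and the concentration of each graded piece in a single bidegree, I compute
\[
\rho_{2k}\HR(S/R;\Z_p)\cong \ul{(\Gamma_S^k M)_p^\wedge},
\]
and therefore
\[
\rho_{2i}\bigl(\gr^n\Fil\THR(S;\Z_p)\bigr)\cong \ul{(\Gamma_S^{i-n} M)_p^\wedge}
\]
for \(0\le n\le i\), and vanishes otherwise. This already establishes the ``in particular'' assertion in (2) and reduces both (1) and the main equivalence in (2) to analyzing the \(\tilde{u}\)-action on this finite filtration.

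For (1), I lift \(\tilde{u}\) through the slice tower to a class in \(P_2\THR(R;\Z_p)\); such a lift exists because \(\THR(R;\Z_p)\) is very even with \(P_2^2\THR(R;\Z_p)\simeq \Sigma^{1+\sigma}\rH\ul{R}\) and \(\tilde{u}\) is its canonical generator. Multiplication by \(\tilde{u}\) then sends \(\Fil_n\THR(S;\Z_p)\) to \(\Sigma^{-1-\sigma}\Fil_{n+1}\THR(S;\Z_p)\), and under the canonical identification \(\Sigma^{-1-\sigma}\gr^{n+1}\simeq \gr^n\) the induced map on graded pieces is the identity on \(\Sigma^{n+n\sigma}\HR(S/R;\Z_p)\). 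Passing to \(\rho_{2i}\), multiplication by \(\tilde{u}\) carries the \(n\)th filtration step of \(\rho_{2i-2}\THR(S;\Z_p)\) isomorphically onto the \((n+1)\)st filtration step of \(\rho_{2i}\THR(S;\Z_p)\). Since the induced filtrations are finite and the graded pieces are constant Mackey functors, this gives the monomorphism of Mackey functors.

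For the main equivalence of (2), taking the colimit along \(\tilde{u}\)-multiplication preserves the constancy of the Mackey functors \(\rho_{2i}\THR(S;\Z_p)\) (Theorem \ref{THR.6}), so \(\rho_\infty\THR(S;\Z_p)\cong \ul{\pi_\infty\THH(S;\Z_p)}\). The \(\tilde{u}\)-compatible finite filtrations assemble into an exhaustive increasing filtration on the colimit whose successive quotients are \(\ul{(\Gamma_S^j M)_p^\wedge}\) by the computation of the first paragraph. The multiplicative (Green-functor) structure agrees with the divided-power structure on \(\ul{(\Gamma_S M)_p^\wedge}\) because both are constant Mackey functors and the identification on underlying rings is \cite[Theorem 7.1]{BMS19}.

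The main obstacle I anticipate is verifying that multiplication by \(\tilde{u}\) acts as the identity on graded pieces of \(\Fil_\bullet\THR(S;\Z_p)\). This requires unwinding the definition of \(\tilde{u}\) as the canonical generator of \(\rho_2 P_2^2\THR(R;\Z_p)\cong \ul{R}\) and tracing it through the equivalence \(\gr^n\simeq \Sigma^{n+n\sigma}\HR(S/R;\Z_p)\) established in the proof of Theorem \ref{THR.6}; ultimately it reduces to the multiplicativity of the slice tower on \(\THR(R;\Z_p)\) and the fact that the generator of the free rank-one module \(\ul{R}\) acts as the unit.
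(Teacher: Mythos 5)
Your strategy works, but it is a genuinely different --- and much heavier --- route than the one the paper takes. The paper's entire proof is to combine Theorem \ref{THR.6} with \cite[Theorem 7.1(2),(3)]{BMS19}: since $\THR(S;\Z_p)$ is very even, each $\rho_{2i}\THR(S;\Z_p)$ is the constant Mackey functor $\ul{\pi_{2i}\THH(S;\Z_p)}$ with restriction an isomorphism, so multiplication by $\tilde{u}$ is the constant-Mackey-functor avatar of multiplication by $u\in\pi_2\THH(R;\Z_p)$; a map of constant Mackey functors is a monomorphism exactly when the underlying map of abelian groups is injective, and the colimit, its induced filtration, and the Green-functor structure are all computed on underlying rings. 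Both (1) and (2) then transcribe the corresponding non-equivariant statements of \cite{BMS19} verbatim. You instead re-prove those non-equivariant statements equivariantly, by running the relative filtration $\Fil_n\THR(S;\Z_p)=\THR(S;\Z_p)\wedge_{\THR(R;\Z_p)}P_{2n}\THR(R;\Z_p)$ and tracking the $\tilde{u}$-action through it. That is defensible and more self-contained on the equivariant side, but note that your final step --- matching the Green-functor structure ``because both are constant Mackey functors and the identification on underlying rings is \cite[Theorem 7.1]{BMS19}'' --- is precisely the mechanism that, applied from the outset, makes the whole filtration analysis unnecessary.

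Two soft spots if you pursue your route. First, the claim that $\tilde{u}$-multiplication carries the $n$th filtration step of $\rho_{2i-2}$ \emph{isomorphically onto} the $(n+1)$st filtration step of $\rho_{2i}$ is too strong: the target step also contains the lower steps of the filtration on $\rho_{2i}$, which are not in the image (indeed the cokernel of $\tilde{u}$ is the extra graded piece $\ul{(\Gamma_S^iM)_p^\wedge}$). What you actually have is that the map induces isomorphisms on associated graded pieces, and a map of finitely filtered Mackey functors that is a monomorphism on associated graded is a monomorphism --- which is all that (1) requires. Second, promoting $\tilde{u}$-multiplication to a map of towers $\Sigma^{1+\sigma}\Fil_n\to\Fil_{n+1}$ needs $P_{2n}\THR(R;\Z_p)$ to be a $\THR(R;\Z_p)$-module compatibly with the slice truncations; this is implicit in the paper's use of \cite[Theorem 5.16]{HP} in the proof of Theorem \ref{THR.6}, but in your argument it is doing real work and should be justified rather than asserted.
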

\begin{proof}
Combine \cite[Theorem 7.1(2),(3)]{BMS19} with Theorem \ref{THR.6}.
\end{proof}

\begin{cor}
Let $R\to A$ be a map of commutative rings,
where $R$ is perfectoid,
and $A$ is quasisyntomic.
Then $\THR(A;\Z_p)$ admits a natural complete filtration such that $\gr^i\THR(A;\Z_p)$ admits a finite filtration with graded pieces given by \[
\Sigma^{i+i\sigma-j}\iota(\wedge_A^j \L_{A/R})^{\wedge}_p
\]
for $0 \leq j \leq i$.
\end{cor}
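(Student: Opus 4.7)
The plan is to first construct the filtration for $\THR(S;\Z_p)$ when $S$ is a quasiregular semiperfectoid $R$-algebra, and then extend to general $A\in \QSyn_R$ by quasisyntomic descent using Theorem~\ref{TPR.6}.

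Fix $S$ quasiregular semiperfectoid over $R$. The proof of Theorem~\ref{THR.6} produces a natural complete filtration
\[
F_n \THR(S;\Z_p):=\THR(S;\Z_p)\wedge_{\THR(R;\Z_p)}P_{2n}\THR(R;\Z_p)
\]
with $\gr^n F_\bullet \simeq \Sigma^{n+n\sigma}\HR(S/R;\Z_p)$, and Theorem~\ref{descent.3} equips each $\HR(S/R;\Z_p)$ with a natural complete filtration $G_j$ whose graded pieces are $\Sigma^{j\sigma}\iota(\wedge_S^j \L_{S/R})^\wedge_p$. I would assemble these into a bifiltered $\Z/2$-spectrum in $\Fun(\Z^{op}\times\Z^{op},\Sp^{\Z/2})$, with $G_\bullet$ refining each outer graded piece of $F_\bullet$, and then take the left Kan extension along the addition map $\Z^2\to\Z$ to obtain a singly-indexed complete filtration $\Fil^i \THR(S;\Z_p)$. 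By construction the $i$th graded piece carries a finite filtration of length $i+1$ whose subquotients are
\[
\Sigma^{(i-j)+i\sigma}\iota(\wedge_S^j\L_{S/R})^\wedge_p = \Sigma^{i+i\sigma-j}\iota(\wedge_S^j\L_{S/R})^\wedge_p
\]
for $0\leq j\leq i$.

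For general $A\in\QSyn_R$, I would pick a quasisyntomic cover $A\to S^0$ with $S^0$ quasiregular semiperfectoid over $R$ and form its \v{C}ech nerve $S^\bullet$; by \cite[Proposition 4.31, Variant 4.33]{BMS19} each $S^n$ is again quasiregular semiperfectoid. Theorem~\ref{TPR.6} yields $\THR(A;\Z_p)\simeq \lim_{[n]\in\Delta}\THR(S^n;\Z_p)$, and since the presheaf $(\wedge_{(-)}^j \L_{(-)/R})^\wedge_p$ is a quasisyntomic sheaf on $\QSyn_R^{op}$ by \cite[Theorem 3.1, Remark 4.9]{BMS19}, the entire bifiltered construction descends cosimplicially. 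Setting
\[
\Fil^i \THR(A;\Z_p):=\lim_{[n]\in\Delta} \Fil^i \THR(S^n;\Z_p)
\]
and totalizing the inner finite filtrations similarly, completeness and the claimed graded pieces for $A$ are inherited termwise from the quasiregular semiperfectoid case.

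The main obstacle I anticipate is the formal setup of the bifiltered object and the verification that the left Kan extension along $+\colon \Z^2\to\Z$ produces a complete filtration with the stated finite filtration on its graded pieces, and that this construction commutes with \v{C}ech totalization over $\Delta$. Both points reduce to uniform connectivity estimates on $\Fil^i \THR(S^n;\Z_p)$ analogous to the ones appearing in the proofs of Theorems~\ref{THR.6} and \ref{geometric.6}; once those are in place, the remaining bookkeeping is routine.
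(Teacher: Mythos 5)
Your bookkeeping for the graded pieces is right, and the descent step at the end is essentially what the paper does. But your route through the quasiregular semiperfectoid case differs from the paper's, and the step you defer --- ``assemble these into a bifiltered $\Z/2$-spectrum\dots with $G_\bullet$ refining each outer graded piece of $F_\bullet$'' --- is the actual crux, and it is not a connectivity issue. Given a complete filtration $F_\bullet X$ together with a chosen filtration on each $\gr^n F$, there is in general no way to lift the latter to a bifiltration of $X$: a filtration on a cofiber does not canonically refine the object it is a cofiber of. Here the outer filtration is $\THR(S;\Z_p)\wedge_{\THR(R;\Z_p)}P_{2\bullet}\THR(R;\Z_p)$ and the inner one is the real HKR filtration of Theorem \ref{descent.3}, which is produced by left Kan extension from polynomial algebras and quasisyntomic descent; nothing in its construction presents it as a filtration of $F_n$ compatible with $F_{n+1}\to F_n$. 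So the ``uniform connectivity estimates'' you invoke would guarantee completeness of a total filtration \emph{if you had one}, but they do not produce the bifiltered object. (One could try an equivariant d\'ecalage in a Beilinson-type $t$-structure on filtered $\Z/2$-spectra, but that is substantial extra machinery that you would have to set up.)

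The paper avoids this entirely. It filters $\THR(A;\Z_p)$ by its own double speed slice filtration $P_{2\bullet}$; by Theorem \ref{THR.6} the spectrum is very even, so the $i$th graded piece is $\Sigma^{i+i\sigma}\rH\,\ul{\pi_{2i}\THH(A;\Z_p)}$, an Eilenberg--MacLane spectrum of a \emph{constant} Mackey functor. Corollary \ref{moreonqrsperfd}(2) --- which is just the non-equivariant finite filtration of $\pi_{2i}\THH(S;\Z_p)$ by divided powers from \cite[Theorem 7.1(2),(3)]{BMS19}, promoted to constant Mackey functors --- then gives the finite filtration on each slice with the stated graded pieces, and one concludes by quasisyntomic descent using Theorem \ref{TPR.6} exactly as you propose. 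The lesson is that very evenness lets you put the finite filtration on the homotopy Mackey functors (i.e.\ on abelian groups, where the BMS statement applies verbatim) rather than on the filtered spectrum level, which is where your approach gets stuck. If you want to salvage your argument, replace the bifiltration by this slice-level argument, or else supply an actual construction of the bifiltered object.
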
    
\begin{proof}
If $A$ is quasiregular semiperfectoid,
then consider the double speed Postnikov filtration on $\THR(A;\Z_p)$,
and use Corollary \ref{moreonqrsperfd}(2).
Together with \cite[Remark 4.9, Proposition 4.31, Variant 4.33]{BMS19} and Theorem \ref{TPR.6},
we finish the proof for quasisyntomic $A$ by quasisyntomic descent.
\end{proof}

\section{\texorpdfstring{$\TCR^-$}{TCR-} and \texorpdfstring{$\TPR$}{TPR} of quasiregular semiperfectoid rings}

So far,
we have shown that $\THR(S;\Z_p)$ is strongly even for every quasiregular semiperfectoid ring $S$.
Based on this,
we show that $\TCR^-(S;\Z_p)$ and $\TPR(S;\Z_p)$ are strongly even too in this section.

\begin{prop}
\label{TPR.5}
Let $M$ be an abelian group.
Then $\rH \ul{M}_{hS^\sigma}$, $\rH \ul{M}^{hS^\sigma}$, and $\rH \ul{M}^{tS^\sigma}$ are strongly even.
\end{prop}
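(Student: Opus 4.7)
The plan is to exploit the cell structure of the $\Z/2$-space $BS^\sigma$ (denoted $B_{C_2}^t S^1$ in \cite{QS22}), which equivariantly lifts the usual CW structure of $\mathbb{C}P^\infty$: there is a skeletal filtration $F_{-1} = \emptyset \subset F_0 \subset F_1 \subset \cdots$ by $\Z/2$-subspaces with successive cofibers $(F_n)_+/(F_{n-1})_+ \simeq S^{n+n\sigma}$, corresponding to $\Z/2$-cells $D(\R^n \oplus \R^{n\sigma})$. By Proposition \ref{remind.2}, $\rH \ul{M}_{hS^\sigma} \simeq \Sigma^\infty (BS^\sigma)_+ \wedge \rH \ul{M}$, so setting $X_n := \Sigma^\infty (F_n)_+ \wedge \rH \ul{M}$ (with $X_{-1} := 0$) yields cofiber sequences $X_{n-1} \to X_n \to \Sigma^{n+n\sigma} \rH \ul{M}$. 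Each cofiber is very even (its only nonzero $\rho$ is $\rho_{2n} = \ul{M}$), and $X_0 \simeq \rH \ul{M}$ is very even, so iterating Lemma \ref{THR.4} shows every $X_n$ is very even. Passing to the colimit: since $\ul{\pi}_{k+k\sigma}$ commutes with filtered colimits in $\Sp^{\Z/2}$ and filtered colimits of constant Mackey functors remain constant, $\rH \ul{M}_{hS^\sigma} \simeq \colim_n X_n$ is very even.

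For $\rH \ul{M}^{hS^\sigma} \simeq F(\Sigma^\infty (BS^\sigma)_+, \rH \ul{M}) \simeq \lim_n Y_n$ with $Y_n := F(\Sigma^\infty (F_n)_+, \rH \ul{M})$, the map $Y_n \to Y_{n-1}$ has fiber $\Sigma^{-n-n\sigma} \rH \ul{M}$, which is very even; starting from $Y_0 \simeq \rH \ul{M}$, Lemma \ref{THR.4} shows each $Y_n$ is very even. To handle the limit, I will package this into a decreasing complete filtration suitable for Lemma \ref{THR.5}: set $\Fil_n \rH \ul{M}^{hS^\sigma} := \fib(\rH \ul{M}^{hS^\sigma} \to Y_{n-1})$ with $Y_{-1} := 0$, so $\Fil_0 = \rH \ul{M}^{hS^\sigma}$ and $\lim_n \Fil_n \simeq \fib(\rH \ul{M}^{hS^\sigma} \to \lim_n Y_{n-1}) \simeq 0$. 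The octahedral axiom identifies $\gr^n \simeq \Sigma^{-n-n\sigma} \rH \ul{M}$, which is very even, and Lemma \ref{THR.5} concludes.

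For $\rH \ul{M}^{tS^\sigma}$, I will rotate the norm cofiber sequence of Definition \ref{remind.1} to
\[
\rH \ul{M}^{hS^\sigma} \to \rH \ul{M}^{tS^\sigma} \to \Sigma^{1+\sigma} \rH \ul{M}_{hS^\sigma}
\]
and apply Lemma \ref{THR.4}. The decisive input is that $\Sigma^{1+\sigma}$ preserves very-evenness: for any very even $Y$, the identity $\rho_{2n}(\Sigma^{1+\sigma} Y) = \rho_{2(n-1)}(Y)$ keeps the even-degree Mackey functors constant, while $\rho_{2n+1}(\Sigma^{1+\sigma} Y) = \cP^0 \ul{\pi}_{n + (n-1)\sigma}(Y)$ vanishes because its $\Z/2$-value is $\pi_{2n-1}(i^* Y) = 0$ by evenness. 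Thus $\Sigma^{1+\sigma} \rH \ul{M}_{hS^\sigma}$ is very even, and Lemma \ref{THR.4} applied to the rotated sequence gives the result.

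The main obstacle is the Tate case: a direct two-sided complete filtration on $\rH \ul{M}^{tS^\sigma}$ would be awkward to construct, so the cleaner route is to bootstrap from the other two cases via the rotated norm sequence. This works precisely because of the arithmetic $(n+n\sigma) - (1+\sigma) = (n-1) + (n-1)\sigma$, which keeps $\Sigma^{1+\sigma}$ within the very-even class; a suspension by any other $RO(\Z/2)$-degree would break the match between the $\rho$-indexing and the $n+n\sigma$ diagonal. A minor secondary obstacle is that the homotopy-orbit step uses an increasing rather than decreasing filtration, so one cannot invoke Lemma \ref{THR.5} directly and must instead verify by hand that filtered colimits preserve very-evenness in $\Sp^{\Z/2}$.
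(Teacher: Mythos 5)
Your proof is correct, but it follows a genuinely different route from the paper's. The paper first reduces to the case where $M$ is a commutative ring via the square-zero extension $\Z\oplus M$, then imports the explicit splittings $\rH\ul{M}_{hS^\sigma}\simeq \coprod_i \Sigma^{i+i\sigma}\rH\ul{M}$ and $\rH\ul{M}^{hS^\sigma}\simeq \prod_i\Sigma^{-i-i\sigma}\rH\ul{M}$ from \cite{QS22}, and for the Tate construction proves that the norm map is a \emph{zero} map of $\rH\ul{M}$-modules (by checking $\ul{\pi}_0(\Sigma^{n+(n-1)\sigma}\rH\ul{M})=0$), which splits $\rH\ul{M}^{tS^\sigma}$ as $\Sigma^{1+\sigma}\rH\ul{M}_{hS^\sigma}\oplus \rH\ul{M}^{hS^\sigma}$. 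You instead work with the unsplit equivariant cell filtration of $BS^\sigma\simeq \mathbb{C}\mathrm{P}^\infty$ with conjugation (cofibers $S^{n+n\sigma}$), feed it into Lemma \ref{THR.4}, Lemma \ref{THR.5}, and the compactness of the corepresenting objects for $\ul{\pi}_{k+k\sigma}$, and dispose of the Tate case by rotating the norm cofiber sequence and observing that $\Sigma^{1+\sigma}$ preserves very-evenness --- which is exactly the trick the paper itself deploys later in the proof of Theorem \ref{TPR.7}. Your route avoids both the reduction to ring coefficients and the norm-vanishing computation, so it is more economical for the bare statement; the paper's route buys more, namely the explicit additive splittings and the vanishing of the norm, which identify the homotopy Mackey functors of all three constructions rather than merely certifying very-evenness. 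Two small housekeeping points: you should record (via Proposition \ref{TPR.4}) that the $S^\sigma$-action on $\rH\ul{M}$ is trivial before invoking Proposition \ref{remind.2}, and you should supply a reference for the Real cell structure on $B S^\sigma$ with filtration quotients $S^{n+n\sigma}$ --- this is standard (it is the geometric input behind the splittings in \cite{QS22}) but is not proved in the present paper.
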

\begin{proof}
Note that every $S^\sigma$-action on $\rH \ul{M}$ is trivial by \cite[Proposition 6.10]{QS22}.
Consider the commutative ring $R:=\Z\oplus M$ whose multiplication is given by $(a,x)(b,y):=(ab,ay+bx)$.
The claim for $R$ implies the claim for $M$ since $\rH \ul{R}\simeq \rH \ul{\Z}\oplus \rH\ul{M}$.
Hence we reduce to the case when $M$ is a commutative ring.

As in the proofs of \cite[Lemma 4.7, Proposition 6.14]{QS22},
there are equivalences of $\Z/2$-spectra
\begin{gather*}
\rH \ul{M}_{hS^\sigma}
\simeq
\coprod_{i=0}^\infty \Sigma^{i+i\sigma} \rH \ul{M},
\\
\rH \ul{M}^{hS^\sigma}
\simeq
\prod_{i=0}^\infty \Sigma^{-i-i\sigma} \rH \ul{M}.
\end{gather*}
Hence $\rH \ul{M}_{hS^\sigma}$ and $\rH \ul{M}^{hS^\sigma}$ are strongly even.

Using the cofiber sequence $(-)^{hS^\sigma}\to (-)^{tS^\sigma}\to \Sigma^{1+\sigma}(-)_{hS^\sigma}$ and Lemma \ref{THR.4},
we deduce that $\rH \ul{M}^{tS^\sigma}$ is strongly even.
\end{proof}

\begin{thm}
\label{TPR.7}
Let $S$ be a quasiregular semiperfectoid ring.
Then $\TCR^-(S;\Z_p)$ and $\TPR(S;\Z_p)$ are strongly even.
Hence for every integer $n$,
there are natural isomorphisms
\begin{gather*}
\rho_{2n}\TCR^-(S;\Z_p)
\simeq
\ul{\pi_{2n}\TCR^-(S;\Z_p)},
\\
\rho_{2n}\TPR(S;\Z_p)
\simeq
\ul{\pi_{2n}\TPR(S;\Z_p)}.
\end{gather*}
\end{thm}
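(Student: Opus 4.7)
The plan is to propagate very-evenness from $\THR(S;\Z_p)$ (Theorem \ref{THR.6}) through the functors $(-)^{hS^\sigma}$ and $(-)^{tS^\sigma}$ by means of the $S^\sigma$-equivariant slice filtration of Construction \ref{TPR.10}. Because $\THR(S;\Z_p)$ is very even, all odd slices vanish, and the double-speed filtration $\Fil_n:=P_{2n}\THR(S;\Z_p)$, regarded in $(\Sp^{\Z/2})^{BS^\sigma}$, has $n$th graded piece $\gr^n\simeq \Sigma^{n+n\sigma}\rH\ul{M_n}$ with $M_n:=\pi_{2n}\THH(S;\Z_p)$; by Proposition \ref{TPR.4} the induced $S^\sigma$-action on each slice is necessarily trivial. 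This filtration is itself complete: $P_{2n}\THR(S;\Z_p)\in\Sp_{\geq 2n}^{\Z/2}$ is slice-$2n$-connective and both $i^*$ and $\Phi^{\Z/2}$ of it become arbitrarily connective in the standard $t$-structure, whence $\lim_n P_{2n}\THR(S;\Z_p)\simeq 0$ by conservativity of $(i^*,(-)^{\Z/2})$.

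First I treat $\TCR^-(S;\Z_p)\simeq\THR(S;\Z_p)^{hS^\sigma}$: applying the limit-preserving functor $(-)^{hS^\sigma}$ to the above filtration produces an automatically complete filtration on $\TCR^-(S;\Z_p)$. Its $n$th graded piece is $(\Sigma^{n+n\sigma}\rH\ul{M_n})^{hS^\sigma}\simeq\Sigma^{n+n\sigma}(\rH\ul{M_n})^{hS^\sigma}$, using triviality of the $S^\sigma$-action and the commutation of $\Sigma^{n+n\sigma}$ with the limit functor $(-)^{hS^\sigma}$. By Proposition \ref{TPR.5}, $(\rH\ul{M_n})^{hS^\sigma}$ is very even, and a routine check from Definition \ref{def:even} shows very-evenness is preserved under $\Sigma^{1+\sigma}$-suspension; hence each graded piece is very even. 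Lemma \ref{THR.5} then yields that $\TCR^-(S;\Z_p)$ is very even, and \eqref{eq:very even} supplies the claimed isomorphism.

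For $\TPR(S;\Z_p)\simeq\THR(S;\Z_p)^{tS^\sigma}$, applying $(-)^{tS^\sigma}$ to the same filtration produces a candidate filtration whose graded pieces $\Sigma^{n+n\sigma}(\rH\ul{M_n})^{tS^\sigma}$ are very even for the same reasons, via Propositions \ref{TPR.4} and \ref{TPR.5}. The main obstacle is completeness, since the Tate construction is neither a limit nor a colimit. Using the norm cofiber sequence $\Sigma^\sigma(-)_{hS^\sigma}\to (-)^{hS^\sigma}\to (-)^{tS^\sigma}$, this reduces to verifying $\lim_n(\Fil_n)_{hS^\sigma}\simeq 0$, for which the plan is to invoke Lemma \ref{TPR.2} after checking that $\{P_{2n}\THR(S;\Z_p)\}$ forms (up to reindexing) a weak Postnikov tower in the sense of Definition \ref{TPR.1}. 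This is the technical heart of the argument, requiring careful bookkeeping of the slice connectivity of the successive fibers $\fib(P_{2n}\THR(S;\Z_p)\to P_{2(n-1)}\THR(S;\Z_p))\simeq\Sigma^{n-2+(n-1)\sigma}\rH\ul{M_{n-1}}$ to land them in $\Sigma^{n}\Sp_{\geq 0}^{\Z/2}$. Once this is in place, a final application of Lemma \ref{THR.5} concludes that $\TPR(S;\Z_p)$ is very even, and \eqref{eq:very even} provides the remaining isomorphism.
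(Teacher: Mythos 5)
Your proposal is correct and follows essentially the same route as the paper: very-evenness of $\THR(S;\Z_p)$ (Theorem \ref{THR.6}) combined with Propositions \ref{TPR.4} and \ref{TPR.5} on the graded pieces of the double-speed slice filtration, Lemma \ref{TPR.2} to get completeness after $(-)_{hS^\sigma}$, the norm cofiber sequence for the Tate construction, and Lemma \ref{THR.5} to assemble the pieces. The only (harmless) deviations are that you apply the weak-Postnikov-tower argument to the tower $P_{2\bullet}\THR(S;\Z_p)$ rather than to the quotient tower $P^{2\bullet}\THR(S;\Z_p)$ as the paper does, and that you filter $\TPR(S;\Z_p)$ directly instead of first showing $\THR(S;\Z_p)_{hS^\sigma}$ is very even and then invoking Lemma \ref{THR.4} on the norm cofiber sequence.
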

\begin{proof}
Since $(-)^{hS^\sigma}$ preserves limits,
$(P_{2\bullet} \THR(S;\Z_p))^{hS^\sigma}$ is a complete filtration.
Its graded pieces are strongly even by Theorem \ref{THR.6} and Proposition \ref{TPR.5}.
Lemma \ref{THR.5} implies that $\TCR^-(S;\Z_p)$ is strongly even.

We claim that $\THR(S;\Z_p)_{hS^\sigma}$ is strongly even.
Since $\THR(S;\Z_p)$ is strongly even by Theorem \ref{THR.6},
we have a fiber sequence
\[
P_{2n}^{2n}\THR(S;\Z_p)
\to
P^{2n}\THR(S;\Z_p)
\to
P^{2n-2}\THR(S;\Z_p)
\]
for every integer $n$.
Using Proposition \ref{slice.2},
we see that $P^{2\bullet}\THR(S;\Z_p)$ is a weak Postnikov tower.
By Lemma \ref{TPR.2},
we see that $(P_{2\bullet}\THR(S;\Z_p))_{hS^\sigma}$ is a complete filtration.
Since its $n$th graded piece $(P_{2n}^{2n}\THR(S;\Z_p))_{hS^\sigma}$ is strongly even by Proposition \ref{TPR.5},
Lemma \ref{THR.5} implies that $\THR(S;\Z_p)_{hS^\sigma}$ is strongly even.

Using the cofiber sequence $(-)^{hS^\sigma}\to (-)^{tS^\sigma}\to \Sigma^{1+\sigma}(-)_{hS^\sigma}$ and Lemma \ref{THR.4},
we deduce that $\TPR(S;\Z_p)$ is strongly even.
\end{proof}

\section{\texorpdfstring{$\TCR$}{TCR} of quasiregular semiperfectoid rings}\label{TCR}

Recall from \cite[\S 7.4]{BMS19} that for a quasiregular semiperfectoid ring $S$ and an integer $n$,
the \emph{syntomic cohomology} is
\[
\Z_p(n)(S)
:=
\fib(\rH \pi_{2n}\TC^-(S;\Z_p)
\xrightarrow{\varphi-\can}
\rH \pi_{2n}\TP(S;\Z_p))
\in \Sp.
\]
Furthermore,
we obtain 
$\Z_p(n)(A)$ for quasisyntomic ring $A$ by quasisyntomic descent \cite[Proposition 4.31]{BMS19}.

\begin{thm}
\label{TCR.1}
Let $A$ be a quasisyntomic ring.
Then there exist natural complete exhaustive multiplicative filtrations on $\THR(A;\Z_p)$, $\TCR^-(A;\Z_p)$, $\TPR(A;\Z_p)$, and $\TCR(A;\Z_p)$ whose $n$th graded pieces are
\begin{gather*}
\gr^n \THR(A;\Z_p)
\simeq
\Sigma^{n+n\sigma}
\iota \cN^n  \widehat{\prism}_A\{n\},
\\
\gr^n \TCR^-(A;\Z_p)
\simeq
\Sigma^{n+n\sigma}
\iota \cN^{\geq n} \widehat{\prism}_A\{n\},
\\
\gr^n \TPR(A;\Z_p)
\simeq
\Sigma^{n+n\sigma}
\iota \widehat{\prism}_A\{n\},
\\
\gr^n \TCR(A;\Z_p)
\simeq
\Sigma^{n+n\sigma}
\iota \Z_p(n)(A).
\end{gather*}
\end{thm}
\begin{proof}
By quasisyntomic descent \cite[Proposition 4.31]{BMS19} and Theorem \ref{TPR.6},
we reduce to the case when $A$ is quasiregular semiperfectoid.
Then the claim is a consequence of \cite[Theorem 1.12]{BMS19},
Theorems \ref{THR.6} and \ref{TPR.7},
and the formulation of $\TCR(A;\Z_p)$ in Definition \ref{comp.8}.
\end{proof}

\begin{exm}
\label{TCR.5}
Quigley and Shah \cite[Theorem 6.30]{QS22} proved that there exists an equivalence of $\Z/2$-spectra
\[
\TCR(\rH \ul{k})
\simeq
\rH \ul{\Z_p} \oplus \Sigma^{-1} \rH \ul{\coker(1-F)}
\]
for every perfect field $k$ of characteristic $p>0$,
where $F$ denotes the Frobenius on $W(k)$.
We can recover the computation of $\TCR(\rH\ul{k};\Z_p)$ by combining Theorem \ref{TCR.1} and the computation Hesselholt and Madsen \cite[Theorem B]{MR1410465}
\[
\TC(\rH \ul{k})
\simeq
\rH \Z_p \oplus \Sigma^{-1} \rH \coker(1-F).
\]
as follows.
Since $k$ is quasiregular semiperfectoid,
$\Z_p(0)(k)$ agrees with this, and $\Z_p(i)(k)$ vanishes for every integer $i>0$.
Use
\[
\iota (\rH \Z_p \oplus \Sigma^{-1} \rH \coker(1-F))
\simeq
\rH \ul{\Z_p} \oplus \Sigma^{-1} \rH \ul{\coker(1-F)}
\]
to conclude.
\end{exm}

From now on,
for a $\Z/2$-spectrum $X$,
we will also use the grading convention
\[
\pi_{s,w}^{\Z/2}(X)
:=
\pi_{s-w+w\sigma}^{\Z/2}(X).
\]

\begin{thm}
\label{TCR.6}
Let $k$ be a perfect field of characteristic $2$.
Then for every integer $e\geq 1$, we have an isomorphism of abelian groups
\[
\pi_{s,w}^{\Z/2}\TCR(k[x]/x^e;\Z_2) 
\cong
\bigoplus_{i=0}^s E_{i,s-i,w}^2,
\]
where
\begin{align*}
&E_{*,*,*}^2
\cong 
(C[\tau^2]
\oplus
\rho D[\tau^2,\rho]
\oplus
\tau D'[\tau^2,\rho]
\oplus
\tfrac{\gamma}{\tau} C[\tfrac{1}{\tau^2}]
\oplus
\tfrac{\gamma}{\tau^2}D [\tfrac{1}{\tau^2},\tfrac{1}{\rho}] \oplus \tfrac{\gamma}{\tau \rho} D'[\tfrac{1}{\tau^2},\tfrac{1}{\rho}])\{y\}
\\
&\oplus 
\Z_2[\tau^2] \oplus \rho \F_2[\tau^2,\rho]
\oplus
\tfrac{\gamma}{\tau} \Z_2[\tfrac{1}{\tau^2}] \oplus \tfrac{\gamma}{\tau^2}\F_2 [\tfrac{1}{\tau^2},\tfrac{1}{\rho}]
\\
\oplus
&
\bigoplus_{t=1}^\infty
(
A_t[\tau^2]
\oplus
\rho B_t[\tau^2,\rho]
\oplus
\tau B_t'[\tau^2,\rho]
\oplus
\tfrac{\gamma}{\tau} A_t[\tfrac{1}{\tau^2}]
\oplus
\tfrac{\gamma}{\tau^2}B_t [\tfrac{1}{\tau^2},\tfrac{1}{\rho}] \oplus \tfrac{\gamma}{\tau \rho} B_t'[\tfrac{1}{\tau^2},\tfrac{1}{\rho}]
)
\{x_t\},
\end{align*}
$\lvert \tau\rvert =(0,0,-1)$,
$\lvert \rho \rvert = (-1,0,-1)$,
$\lvert \gamma \rvert = (0,0,1)$,
$\lvert x_t\rvert = (t-1,t,t)$,
$\lvert y \rvert = (-1,0,0)$,
$A_t:=\W_{et}(k)/V_e\W_{t}(k)$, $\W_{m}$ denotes the ring of $m$-truncated big Witt vectors for an integer $m$.
$V_e$ is the $e$th Verschiebung operator,
$B_t:=\coker(2\colon A_t\to A_t)$,
$B_t':=\ker(2\colon A_t\to A_t)$,
$C:=\coker(1-F)$ with the Frobenius $F$ on $W(k)$,
$D:=\coker(2\colon C\to C)$,
and $D':=\ker(2\colon C\to C)$.
\end{thm}
\begin{proof}
Step 1. \emph{Spectral sequence.}
The filtration in Theorem \ref{TCR.1} yields an $RO(\Z/2)$-graded multiplicative spectral sequence
\[
E_{s,t,w}^2
=
\pi_{s+t,w}^{\Z/2}\gr^t \TCR(k[x]/x^e;\Z_2)
\Rightarrow
\pi_{s+t,w}^{\Z/2}\TCR(k[x]/x^e;\Z_2)
\]
with the differentials $d_r\colon E_{s,t,w}^r \to E_{s-r,t+r-1,w}^r$.
Let us compute $E_{*,*,*}^2$.

Using the unit and augmentation maps $k\to k[x]/x^e\to k$,
we see that $\TCR(k;\Z_2)$ is a direct summand of $\TCR(k[x]/x^e;\Z_2)$.
Also, the degree $0$ part of $\widehat{\prism}_{k[x]/x^e}$ in \cite[\S 3]{MR4632370} shows that $\Z_2(0)(k[x]/x^e)$ is independent of $e$.
Together with Example \ref{TCR.5},
we can write
\[
E_{*,*,*}^*
\cong
E_{*,0,*}^*
\oplus
\overline{E}_{*,*,*}^*
\]
with
\[
\overline{E}_{s,t,w}^2 \Rightarrow \pi_{s+t,w}^{\Z/2}(\TCR(k[x]/x^e;\Z_2)/\TCR(k;\Z_2)).
\]

Sulyma's result \cite[Theorem 1.1]{MR4632370},
generalizing Mathew's result \cite[Theorem 10.4]{M21},
yields an equivalence of spectra
\[
\Z_2(t)(k[x]/x^e)
\simeq
\Sigma^{-1}\rH(\W_{et}(k)/V_e\W_{t}(k))
\]
for every integer $t\geq 1$.
If we set $A_t:=\W_{et}(k)/V_e\W_{t}(k)$ for simplicity of notation,
then
\[
\overline{E}_{s,t,w}^2\cong
\left\{
\begin{array}{ll}
\pi_{s-t+1,w}^{\Z/2}(\rH \ul{A_t}) & \text{if }t\geq 1,
\\
0 & \text{otherwise}.
\end{array}
\right.
\]

After applying $\otimes_{\Z_2}W(k)$ to the computation in \cite[p.\ 642]{MR4669153},
we obtain an isomorphism of graded abelian groups
\begin{equation}
\label{TCR.6.1}
\pi_{*,*}^{\Z/2}(\rH \ul{W(k)})
\cong
W(k)[\tau^2] \oplus \rho k[\tau^2,\rho]
\oplus
\tfrac{\gamma}{\tau} W(k)[\tfrac{1}{\tau^2}] \oplus \tfrac{\gamma}{\tau^2}k [\tfrac{1}{\tau^2},\tfrac{1}{\rho}],
\end{equation}
where
$\lvert \tau\rvert =(0,-1)$,
$\lvert \rho \rvert = (-1,-1)$,
and $\lvert \gamma \rvert = (0,1)$.
Since $k\otimes_{W(k)}^L A_t\simeq [A_t\xrightarrow{2} A_t]$ with the right $A_t$ in degree $0$,
we obtain an isomorphism of graded abelian groups
\[
\pi_{*,*}^{\Z/2}(\rH \ul{A_t})
\cong
A_t[\tau^2]
\oplus
\rho B_t[\tau^2,\rho]
\oplus
\tau B_t'[\tau^2,\rho]
\oplus
\tfrac{\gamma}{\tau} A_t[\tfrac{1}{\tau^2}]
\oplus
\tfrac{\gamma}{\tau^2}B_t [\tfrac{1}{\tau^2},\tfrac{1}{\rho}] \oplus \tfrac{\gamma}{\tau \rho} B_t'[\tfrac{1}{\tau^2},\tfrac{1}{\rho}],
\]
where $B_t:=\coker(2\colon A_t\to A_t)$,
and $B_t':=\ker(2\colon A_t\to A_t)$.
See Figure \ref{figure1} for a description of $\pi_{*,*}^{\Z/2}(\rH \ul{A_t})$.
\begin{figure}
\begin{tikzpicture}
\draw[red,->] (0,0)--(-1.5,-0.5);
\draw[shift = {(0,-0.5)},red,->] (0,0)--(-1.5,-0.5);
\draw[shift = {(0,-1)},red,->] (0,0)--(-1.5,-0.5);
\draw[shift = {(0,-1.5)},red,->] (0,0)--(-1.5,-0.5);
\draw[shift = {(0,-2)},red,->] (0,0)--(-1.5,-0.5);
\draw[shift = {(0,1)},red,->] (0,0)--(-1.5,-0.5);
\draw[shift = {(0,1.5)},red,->] (0,0)--(-1.5,-0.5);
\draw[shift = {(0,2)},red,->] (0,0)--(-1.5,-0.5);
\draw[gray, thick,->] (-2.5,0) -- (2.5,0);
\draw[gray, thick,->] (0,-2.5) -- (0,2.5);
\draw node at (2.4,-0.3) {$s$};
\draw node at (-0.35,2.4) {$w$};
\draw node at (0,0) {$\Box$};
\draw node at (0,1) {$\Box$};
\draw node at (0,2) {$\Box$};
\draw node at (0,-1) {$\Box$};
\draw node at (0,-2) {$\Box$};
\draw node at (0,-2) {$\Box$};
\draw node at (0,-0.5) {$\ocircle$};
\draw node at (-0.5,-1) {$\ocircle$};
\draw node at (0,-1.5) {$\ocircle$};
\draw node at (-1,-1.5) {$\ocircle$};
\draw node at (-0.5,-2) {$\ocircle$};
\draw node at (-1.5,-2) {$\ocircle$};
\draw node at (0.5,1.5) {$\ocircle$};
\draw node at (1,2) {$\ocircle$};
\draw node at (-0.5,-0.5) {$\bullet$};
\draw node at (-1,-1) {$\bullet$};
\draw node at (-0.5,-1.5) {$\bullet$};
\draw node at (-1.5,-1.5) {$\bullet$};
\draw node at (-1,-2) {$\bullet$};
\draw node at (-2,-2) {$\bullet$};
\draw node at (0,1.5) {$\bullet$};
\draw node at (0.5,2) {$\bullet$};
\end{tikzpicture}
\caption{\label{figure1}
Descriptions of $\pi_{s,w}^{\Z/2}(\rH \ul{A_t})$ and $d_2$, where $\Box=A_t$, $\bullet=B_t$, and $\ocircle = B_t'$.}
\end{figure}
Hence as graded abelian groups,
$\ol{E}_{*,*,*}^2$ is isomorphic to
\[
\bigoplus_{t=1}^\infty (A_t[\tau^2] \oplus   \rho B_t [\tau^2,\rho]
\oplus \tau B_t' [\tau^2,\rho]
\oplus \tfrac{\gamma}{\tau} A_t[\tfrac{1}{\tau^2}] \oplus \tfrac{\gamma}{\tau^2} B_t [\tfrac{1}{\tau^2},\tfrac{1}{\rho}] \oplus \tfrac{\gamma}{\tau \rho} B_t'[\tfrac{1}{\tau^2},\tfrac{1}{\rho}])\{x_t\},
\]
where $\lvert \tau\rvert =(0,0,-1)$,
$\lvert \rho \rvert = (-1,0,-1)$,
$\lvert \gamma \rvert = (0,0,1)$,
and $\lvert x_t\rvert = (t-1,t,t)$.

By a similar argument,
we also obtain an isomorphism of graded abelian groups
\begin{align*}
E_{*,0,*}^2
\cong &
(C[\tau^2]
\oplus
\rho D[\tau^2,\rho]
\oplus
\tau D'[\tau^2,\rho]
\oplus
\tfrac{\gamma}{\tau} C[\tfrac{1}{\tau^2}]
\oplus
\tfrac{\gamma}{\tau^2}D [\tfrac{1}{\tau^2},\tfrac{1}{\rho}] \oplus \tfrac{\gamma}{\tau \rho} D'[\tfrac{1}{\tau^2},\tfrac{1}{\rho}])\{y\}
\\
\oplus &
\Z_2[\tau^2] \oplus \rho \F_2[\tau^2,\rho]
\oplus
\tfrac{\gamma}{\tau} \Z_2[\tfrac{1}{\tau^2}] \oplus \tfrac{\gamma}{\tau^2}\F_2 [\tfrac{1}{\tau^2},\tfrac{1}{\rho}],
\end{align*}
where $\lvert y\rvert = (-1,0,0)$,
$C:=\coker(1-F)$ with the Frobenius $F$ on $W(k)$,
$D:=\coker(2\colon C\to C)$,
and $D':=\ker(2\colon C\to C)$.

Step 2. \emph{Degeneracy.}
For the case of $W(k)=\Z_2$, \eqref{TCR.6.1} becomes
\[
\pi_{*.*}^{\Z/2}(\rH \ul{\Z_2})
\cong
\Z_2[\tau^2] \oplus \rho \F_2[\tau^2,\rho] \oplus \tfrac{\gamma}{\tau} \Z_2[\tfrac{1}{\tau^2}] \oplus \tfrac{\gamma}{\tau^2} \F_2 [\tfrac{1}{\tau^2},\tfrac{1}{\rho}].
\]
See \cite[Definition 3.3, Proposition 3.4]{MR4669153} for the multiplicative relations on the right-hand side.
Among them, we will only need the relations generated by
\begin{gather*}
\tau^2 (\tau^{m} \rho^n) = \tau^{m+2}\rho^n,
\;
\rho (\tau^{m} \rho^{n}) = \tau^{m} \rho^{n+1},
\\
\tau^2 (\tfrac{\gamma}{\tau^{m+3}\rho^n}) = \tfrac{\gamma}{\tau^{m+1}\rho^n},
\;
\rho (\tfrac{\gamma}{\tau^{m+1}\rho^{n+1}}) = \tfrac{\gamma}{\tau^{m+1}\rho^n}
\end{gather*}
for all $m,n\geq 0$ with even $m$.
By Example \ref{TCR.5},
$\TCR(k;\Z_2)$ is an $\rH \ul{\Z_2}$-algebra.
It follows that 
$\TCR(k[x]/x^e;\Z_2)$ is an $\rH \ul{\Z_2}$-algebra too.
In particular,
$\tau^2$ and $\rho$ act on the computation of $\ol{E}_{*,*,*}^2$ in Step 1 with the above relations allowing odd $m$.

We claim that $\overline{E}$ degenerates at $\overline{E}^2$.
Let $r\geq 2$, and assume $r=2$ or $d_s=0$ for all $s<r$.
Note that $\overline{E}_{s,t,w}^r$ does not vanish if and only if $(a,0,c):=(s,t,w)-\lvert x_t\rvert = (s-t+1,0,w-t)$ satisfies
\[
\text{
($a,c\geq 0$ and $c\geq a+2$)
or
($a,c\leq 0$ and $a\geq c$).
}
\]
See Figure \ref{figure1} for a description of $d_2$.
We have
\[
\lvert d_r(x_t)\rvert -\lvert x_{t+r-1}\rvert= (-2r+1,0,-r+1),
\]
which implies $d_r(x_t)=0$ and $d_r(\tau x_t)=0$ by the degree consideration.
It follows that $d_r(\tau^m\rho^n x_t)=0$ for all integers $m,n\geq 0$ by the Leibniz rule.

For all integers $m,n\geq 1$,
we have
\[
\lvert d_r(\tfrac{\gamma}{\tau^m \rho^n}x_t)\rvert = \lvert \tfrac{\gamma}{\tau^{m+r}\rho^{n-2r-1}} x_{t+r-1}\rvert.
\]
If $n<2r+1$, then interpret this as $d_r(\tfrac{\gamma}{\tau^m \rho^n}x_t)=0$.
Assume $n\geq 2r+1$.
If $m$ is even,
then
\[
0 = d_r(0) = d_r(\tau^m \tfrac{\gamma}{\tau^m \rho^n} x_t)
=
d_r(\tau^m) \tfrac{\gamma}{\tau^m \rho^n} x_t + \tau^m d_r(\tfrac{\gamma}{\tau^m \rho^n} x_t)
=
\tau^m d_r(\tfrac{\gamma}{\tau^m \rho^n} x_t).
\]
Since the multiplication $\tau^m$ from the coefficient of $\tfrac{\gamma}{\tau^{m+r}\rho^{n-2r-1}} x_{t+r-1}$ to the coefficient of $\tfrac{\gamma}{\tau^{r}\rho^{n-2r-1}} x_{t+r-1}$ is an isomorphism,
we have $d_r(\tfrac{\gamma}{\tau^m \rho^n} x_t)=0$.
If $m$ is odd,
then
\[
0 = d_r(\tau^{m+1} \tfrac{\gamma}{\tau^m \rho^n} x_t)
=
\tau^{m+1} d_r(\tfrac{\gamma}{\tau^m \rho^n} x_t).
\]
We similarly deduce $d_r(\tfrac{\gamma}{\tau^m \rho^n} x_t)=0$.

We have shown $d_r=0$.
Hence
$\ol{E}$ degenerates at $\overline{E}^2$ by induction.

Step 3. \emph{Convergence.}
The spectral sequence $E$ is a half-plane spectral sequence.
Since the connectivity of the filtration on $\TCR(k[x]/x^e;\Z_2)$ goes to $\infty$,
$E$ conditionally converges in the sense of \cite[Definition 5.10]{MR1718076}.
Since $E$ degenerates at $E_2$,
$E$ strongly converges by \cite[Theorem 7.1 and the following Remark]{MR1718076}.

Step 4. \emph{$\F_2$-spectral sequence}.
Let us discuss the $\F_2$-version of $\ol{E}$,
which will be useful later when solving extension problems.
Consider the induced spectral sequence
\[
\ol{E}_{s,t,w}'^2
=
\pi_{s+t,w}^{\Z/2}(\gr^t \TCR(k[x]/x^e;\F_2))
\Rightarrow
\pi_{s+t,w}^{\Z/2}(\TCR(k[x]/x^e;\F_2)/\TCR(k;\F_2)),
\]
where $\TCR(R;\F_2):=\TCR(R;\Z_2)/2$ for every commutative ring $R$. Note that
$\TCR(k[x]/x^e;\F_2)\simeq \TCR(k[x]/x^e;\Z_2)\otimes_{\Sphere}\Sphere/2$ is an $\rH\ul{\Z_2}\otimes_{\Sphere} \Sphere/2\simeq \rH\ul{\F_2}$-algebra.
Hence for every integer $w$,
\[
(\Sigma^{-w\sigma} (\TCR(k[x]/x^e;\F_2)/\TCR(k;\F_2)))^{\Z/2}
\]
is an $\rH \F_2$-module,
so
the induced filtration on $\pi_{s+t,w}^{\Z/2}(\TCR(k[x]/x^e;\F_2)/\TCR(k;\F_2))$ splits.
Also, note that $\ol{E}_{s,t,w}'^2$ is multiplicative.

By \cite[Proposition 6.2]{zbMATH01587571},
we have an isomorphism of graded abelian groups
\[
\pi_{*,*}^{\Z/2}(\rH \ul{\F_2})
\cong
\F_2[\tau,\rho] \oplus \tfrac{\gamma}{\tau} \F_2[\tfrac{1}{\tau},\tfrac{1}{\rho}].
\]
Hence as graded abelian groups,
$\ol{E}_{*,*,*}'^2$ is isomorphic to
\[
\bigoplus_{t=1}^\infty
((B_t [\tau,\rho] \oplus \tfrac{\gamma}{\tau} B_t[\tfrac{1}{\tau},\tfrac{1}{\rho}])\{x_t\} \oplus (B_t'[\tau,\rho]\oplus \tfrac{\gamma}{\tau} B_t'[\tfrac{1}{\tau},\tfrac{1}{\rho}])\{x_t'\}
),
\]
where $\lvert x_t'\rvert = (t,t,t)$.

We claim that $\ol{E}'$ degenerates at $\ol{E}'^2$.
Let $r\geq 2$, and assume $r=2$ or $d_s=0$ for all $s<r$.
Note that $\overline{E}_{s,t,w}'^r$ does not vanish if and only if $(a,0,c):=(s,t,w)-\lvert x_t\rvert = (s-t+1,0,w-t)$ satisfies
\[
\text{
($a,c\geq 0$ and $c\geq a+1$)
or
($a\leq 1$, $c\leq 0$, and $a\geq c$).
}
\]
See Figure \ref{figure3} for descriptions of $\ol{E}_{s,t,w}'^2$ and $d_2$.
\begin{figure}
\begin{tikzpicture}
\draw[red,->] (0,0)--(-1.5,-0.5);
\draw[shift = {(0,-0.5)},red,->] (0,0)--(-1.5,-0.5);
\draw[shift = {(0,-1)},red,->] (0,0)--(-1.5,-0.5);
\draw[shift = {(0,-1.5)},red,->] (0,0)--(-1.5,-0.5);
\draw[shift = {(0,-2)},red,->] (0,0)--(-1.5,-0.5);
\draw[shift = {(0,1)},red,->] (0,0)--(-1.5,-0.5);
\draw[shift = {(0,1.5)},red,->] (0,0)--(-1.5,-0.5);
\draw[shift = {(0,2)},red,->] (0,0)--(-1.5,-0.5);
\draw[gray, thick,->] (-2.5,0) -- (2.5,0);
\draw[gray, thick,->] (0,-2.5) -- (0,2.5);
\draw node at (2.4,-0.3) {$s$};
\draw node at (-0.35,2.4) {$w$};
\draw node at (0,0) {$\bullet$};
\draw node at (-0.5,-0.5) {$\bullet$};
\draw node at (-1,-1) {$\bullet$};
\draw node at (-1.5,-1.5) {$\bullet$};
\draw node at (-2,-2) {$\bullet$};
\draw node at (0,1) {$\bullet$};
\draw node at (0,1.5) {$\bullet$};
\draw node at (0,2) {$\bullet$};
\draw node at (0,-0.5) {$\odot$};
\draw node at (0,-1) {$\odot$};
\draw node at (-0.5,-1) {$\odot$};
\draw node at (0,-1.5) {$\odot$};
\draw node at (-0.5,-1.5) {$\odot$};
\draw node at (-1,-1.5) {$\odot$};
\draw node at (0,-2) {$\odot$};
\draw node at (-0.5,-2) {$\odot$};
\draw node at (-1,-2) {$\odot$};
\draw node at (-1.5,-2) {$\odot$};
\draw node at (0.5,1.5) {$\odot$};
\draw node at (0.5,2) {$\odot$};
\draw node at (1,2) {$\odot$};
\draw node at (0.5,0) {$\ocircle$};
\draw node at (0.5,-0.5) {$\ocircle$};
\draw node at (0.5,-1) {$\ocircle$};
\draw node at (0.5,-1.5) {$\ocircle$};
\draw node at (0.5,-2) {$\ocircle$};
\draw node at (0.5,1) {$\ocircle$};
\draw node at (1,1.5) {$\ocircle$};
\draw node at (1.5,2) {$\ocircle$};
\end{tikzpicture}
\caption{\label{figure3}
Descriptions of $\ol{E}_{s,t,w}^2$ and $d_2$, where $\bullet=B_t$, $\ocircle = B_t'$, and $\odot = B_t\oplus B_t'$.}
\end{figure}
As in Steps 2 and 3,
using the actions of $\tau^m \rho^n$ for all integers $m,n\geq 0$ and $\tfrac{\gamma}{\tau^m \rho^n}$ for all integers $m\geq 1$ and $n\geq 0$,
we can show that $\ol{E}'$ degenerates at $\ol{E}'^2$ and strongly converges.

Step 5. \emph{Extension problems, part I.}
The spectral sequence $\overline{E}$ yields a filtration on 
\[
H_{u,w}:=\pi_{u,w}^{\Z/2}(\TCR(k[x]/x^e;\Z_2)/\TCR(k;\Z_2))
\]
whose $s$th graded piece is $E_{s,u-s,w}^2$.
To complete the proof,
it remains to show that the filtration on $H_{u,w}$ splits.
The only nontrivial parts are
\begin{gather*}
\Fil^{t-a-1}H_{2t-1,t-a}\to \cdots \to \Fil^{t-1} H_{2t-1,t-a},
\\
\Fil^{t-a-2}H_{2t-2,t-a-1} \to \cdots \to \Fil^{t-2}H_{2t-2,t-a-1},
\\
\Fil^{t-1}H_{2t-1,t+a+2} \to \cdots \to \Fil^{t+a-1}H_{2t-1,t+a+2},
\\
\Fil^{t}H_{2t,t+a+3} \to \cdots \to \Fil^{t+a}H_{2t,t+a+3}
\end{gather*}
with $a\geq 0$,
which mean that the other parts are zero or remain constant.
See Figure \ref{figure2} for a description of the graded pieces of $H_{u,w}$.
\begin{figure}
\begin{tikzpicture}[scale = 2.2]
\draw node at (0,0) {$F_{0,0}^t$};
\draw node at (0,1) {$F_{0,2}^t$};
\draw node at (0,2) {$F_{0,4}^t$};
\draw node at (0,-1) {$F_{0,-2}^t$};
\draw node at (0,-2) {$F_{0,-4}^t$};
\draw node at (0,-0.5) {$F_{0,-1}^t$};
\draw node at (-0.5,-1) {$F_{-1,-2}^t$};
\draw node at (0,-1.5) {$F_{0,-3}^t$};
\draw node at (-1,-1.5) {$F_{-2,-3}^{t+1}$};
\draw node at (-0.5,-2) {$F_{-1,-4}^t$};
\draw node at (-1.5,-2) {$F_{-3,-4}^{t+1}$};
\draw node at (0.5,1.5) {$F_{1,3}^t$};
\draw node at (1,2) {$F_{2,4}^{t-1}$};
\draw node at (-0.5,-0.5) {$F_{-1,-1}^t$};
\draw node at (-1,-1) {$F_{-2,-2}^{t+1}$};
\draw node at (-0.5,-1.5) {$F_{-1,-3}^t$};
\draw node at (-1.5,-1.5) {$F_{-3,-3}^{t+1}$};
\draw node at (-1,-2) {$F_{-2,-4}^{t+1}$};
\draw node at (-2,-2) {$F_{-4,-4}^{t+2}$};
\draw node at (0,1.5) {$F_{0,3}^t$};
\draw node at (0.5,2) {$F_{1,3}^t$};
\draw[blue] (-2.5,-1.25)--(1.5,0.75);
\draw[shift={(0,-0.25)},blue] (-2.5,-1.25)--(1.5,0.75);
\draw[shift={(0,-0.5)},blue] (-2.5,-1.25)--(1.5,0.75);
\draw[shift={(0,-0.75)},blue] (-2.5,-1.25)--(1.5,0.75);
\draw[shift={(0,-1)},blue] (-2.5,-1.25)--(1.5,0.75);
\draw[shift={(0,-1.25)},blue] (-2,-1)--(1.5,0.75);
\draw[shift={(0,-1.5)},blue] (-1.5,-0.75)--(1.5,0.75);
\draw[shift={(0,-1.75)},blue] (-1,-0.5)--(1.5,0.75);
\draw[shift={(0,-2)},blue] (-0.5,-0.25)--(1.5,0.75);
\draw[shift={(0,1)},blue] (-2.5,-1.25)--(1.5,0.75);
\draw[shift={(0,1.25)},blue] (-2.5,-1.25)--(1.5,0.75);
\draw[shift={(0,1.5)},blue] (-2.5,-1.25)--(1.5,0.75);
\draw[shift={(0,1.75)},blue] (-2.5,-1.25)--(1,0.5);
\draw[shift={(0,2)},blue] (-2.5,-1.25)--(0.5,0.25);
\draw node [right] at (1.5,0.75) {$H_{2t-1,t}$};
\draw node [right] at (1.5,0.5) {$H_{2t-2,t-1}$};
\draw node [right] at (1.5,0.25) {$H_{2t-1,t-1}$};
\draw node [right] at (1.5,0) {$H_{2t-2,t-2}$};
\draw node [right] at (1.5,-0.25) {$H_{2t-1,t-2}$};
\draw node [right] at (1.5,-0.5) {$H_{2t-2,t-3}$};
\draw node [right] at (1.5,-0.75) {$H_{2t-1,t-3}$};
\draw node [right] at (1.5,-1) {$H_{2t-2,t-4}$};
\draw node [right] at (1.5,-1.25) {$H_{2t-1,t-4}$};
\draw node [left] at (-2.5,-0.25) {$H_{2t-1,t+2}$};
\draw node [left] at (-2.5,0) {$H_{2t,t+3}$};
\draw node [left] at (-2.5,0.25) {$H_{2t-1,t+3}$};
\draw node [left] at (-2.5,0.5) {$H_{2t,t+4}$};
\draw node [left] at (-2.5,0.75) {$H_{2t-1,t+4}$};
\end{tikzpicture}
\caption{\label{figure2}
The objects passing through the blue lines describe the graded pieces of $H_{u,w}$,
where $F_{a,c}^{t}:=E_{t-1+a,t,t+c}^2$ for abbreviation.
}
\end{figure}

Step 6. \emph{Extension problems, part II.}
We claim that the filtrations on $H_{2t-1,t-a}$ and $H_{2t-2,t-a-1}$ split.
We proceed by induction on $a$.
The claim is trivial if $a=0$.
Assume $a>0$.
Since the multiplications
\[
\Fil^{t-2} H_{2t-2,t-a+1}
\xrightarrow{\rho}
\Fil^{t-2} H_{2t-1,t-a}
\xrightarrow{\rho}
\Fil^{t-3} H_{2t-2,t-a-1},
\]
are isomorphisms,
by induction,
the filtrations on $H_{2t-1,t-a}$ and $H_{2t-2,t-a-1}$ split if we obtain sections
\begin{equation}
\label{TCR.6.2}
\gr^{t-1} H_{2t-1,t-a}\to H_{2t-1,t-a},
\;
\gr^{t-2} H_{2t-2,t-a-1}\to H_{2t-2,t-a-1}.
\end{equation}

Assume $a=1$.
The multiplication $\rho\colon H_{2t-1,t-1}\to H_{2t-2,t-2}$ is an isomorphism,
so it suffices to show that the filtration on $H_{2t-1,t-1}$ splits.
Consider
\(
H_{u,w}':=\pi_{u,w}^{\Z/2}(\TCR(k[x]/x^e;\F_2)/\TCR(k;\F_2)).
\)
Note that the filtration on $H_{u,w}'$ splits as observed in Step 4.
The induced map $H_{2t-1,t-1}\to H_{2t-1,t-1}'$ yields a splitting for $\gr^{t-2}H_{2t-1,t-1}$ since
\[
\gr^{t-2}H_{2t-1,t-1}
\to
\gr^{t-2}H_{2t-1,t-1}'
\]
can be identified with the identity map $B_{t+1}\to B_{t+1}$.

Assume $a>1$.
Then we have sections
\[
\gr^{t-1}H_{2t-1,t-a+2}\to H_{2t-1,t-a+2},
\;
\gr^{t-2}H_{2t-2,t-a+1}\to H_{2t-2,t-a+1}
\]
by induction.
Compose these with
\[
\tau^2\colon H_{2t-1,t-a+2}\to H_{2t-1,t-a},
\;
\tau^2\colon H_{2t-2,t-a+1}\to H_{2t-2,t-a-1}
\]
and use the induced isomorphisms
\[
\gr^{t-1} H_{2t-1,t-a+2}\cong \gr^{t-1} H_{2t-1,t-a},
\;
\gr^{t-2} H_{2t-2,t-a+1}\cong \gr^{t-2} H_{2t-2,t-a-1}
\]
to yield \eqref{TCR.6.2}.

Step 7. \emph{Extension problems, part III.}
We claim that the filtrations on $H_{2t-1,t+a+2}$ and $H_{2t,t+a+3}$ split.
We proceed by induction on $a$.
The claim is trivial if $a=0$.
Assume $a>0$.
Since the multiplications
\[
H_{2t,t+a+3} / \Fil^{t} H_{2t,t+a+3}
\xrightarrow{\rho} 
 H_{2t-1,t+a+2} / \Fil^{t-1} H_{2t-1,t+a+2}
\xrightarrow{\rho} 
 H_{2t,t+a+2}
\]
are isomorphisms,
by induction,
the filtrations on $H_{2t-1,t+a+2}$ and $H_{2t,t+a+3}$ split if we obtain retractions
\begin{equation}
\label{TCR.6.3}
H_{2t-1,t+a+2}\to \gr^{t-1} H_{2t-1,t+a+2},
\;
H_{2t,t+a+3}\to \gr^t H_{2t,t+a+3}.
\end{equation}

Assume $a=1$.
The multiplication $\rho\colon H_{2t,t+4}\to H_{2t-1,t+3}$ is an isomorphism,
so it suffices to show that the filtration on $H_{2t-1,t+3}$ splits.
For this,
observe that the induced map $H_{2t-1,t+3}\to H_{2t-1,t+3}'$ yields a splitting for $\gr^{t-1} H_{2t-1,t+3}$ since
\[
\gr^{t-1} H_{2t-1,t+3}\to \gr^{t-1} H_{2t-1,t+3}'
\]
can be identified with the identity map $B_t\to B_t$.

Assume $a>1$.
Then we have retractions
\[
H_{2t-1,t+a}\to \gr^{t-1} H_{2t-1,t+a},
\;
H_{2t,t+a+1}\to \gr^t H_{2t,t+a+1}
\]
by induction.
Compose these with
\[
\tau^2\colon H_{2t-1,t+a+2}\to H_{2t-1,t+a},
\;
\tau^2\colon H_{2t,t+a+3}\to H_{2t,t+a+1}
\]
and use the induced isomorphisms
\[
\gr^{t-1} H_{2t-1,t+a+2}\cong \gr^{t-1} H_{2t-1,t+a},
\;
\gr^{t} H_{2t,t+a+3}\cong \gr^t  H_{2t,t+a+1}
\]
to yield \eqref{TCR.6.3}.
\end{proof}

\begin{exm}
\label{TCR.3}
Let $K$ be a finite extension of $\Q_p$,
let $\cO_K$ be its ring of integers,
let $\omega$ be a uniformizer,
and we set $f:=[\cO_K/\omega:\F_2]$.
Antieau, Krause, and Nikolaus announced the following result in \cite[\S 2]{2204.03420} and proved it in \cite{AKN}:
For all integers $i,n\geq 1$,
there exists a quasi-isomorphism of chain complexes (using the Dold--Kan correspondence)
\[
\Z_p(i)(\cO_K/\omega^n)
\simeq
(\Z_p^{f(in-1)}\to \Z_p^{2f(in-1)}\to \Z_p^{f(in-1)})
\]
such that the left $\Z_p^{f(in-1)}$ sits in homological degree $0$ satisfying the following properties.
\begin{enumerate}
\item[(1)] There is an algorithm to determine the latter complex.
\item[(2)] $\Z_p(0)(\cO_K/\omega^n)$ is concentrated in homological degrees $0$ and $-1$.
\item[(3)] $\Z_p(i)(\cO_K/\omega^n)$ is concentrated in homological degrees $-1$ and $-2$ for $i\geq 1$.
\item[(4)]
\(
H^1(\Z_p(i)(\cO_K/\omega^n))
\cong
\pi_{2i-1}\TC(\cO_K/\omega^n)
\) for $i\geq 1$.
\item[(5)]
\(
H^2(\Z_p(1)(\cO_K/\omega^n))
=0.
\)
\item[(6)]
\(
H^2(\Z_p(i)(\cO_K/\omega^n))
\cong
\pi_{2i-2}\TC(\cO_K/\omega^n)
\) for $i\geq 2$.
\end{enumerate}

Let us assume this. Then $\Sigma^2\Z_p(i)(\cO_K/\omega^n)$ is concentrated in homological degrees $0$ and $1$,
so we have
\[
\rho_j(\Sigma^{i+2+i\sigma} \Z_p(i)(\cO_K/\omega^n))
\cong
\left\{
\begin{array}{ll}
\underline{\pi_{2i-2}\TC(\cO_K/\omega^n)}&
\text{if $j=2i$ and $i\geq 2$},
\\
\underline{\pi_{2i-1}\TC(\cO_K/\omega^n)}&
\text{if $j=2i+1$ and $i\geq 1$},
\\
0 &
\text{otherwise}
\end{array}
\right.
\]
for $i\geq 1$.
Together with Theorem \ref{TCR.1},
we have
\[
\rho_j (\Sigma^{2}\tau_{\geq 1}\TCR(\cO_K/\omega^n;\Z_p))
\cong
\left\{
\begin{array}{ll}
\ul{\pi_{j-2}\TC(\cO_K/\omega^n;\Z_p)}&
\text{if $j\geq 3$},
\\
0& \text{otherwise}.
\end{array}
\right.
\]
\end{exm}

\begin{exm}
Here, we assume $p\neq 2$.
As a consequence of  \cite[Theorem 2.8(a)]{Dug05},
for every $\Z_p$-module $M$,
we have
\[
\pi_{s,w}^{\Z/2}(\rH \ul{M})
\cong
\left\{
\begin{array}{ll}
M & \text{if $s=0$ and $w$ is even},
\\
0 & \text{otherwise.}
\end{array}
\right.
\]
Hence
we have
\[
\pi_{s,w}^{\Z/2}(\gr^n\TCR(A;\Z_p))
\cong
\left\{
\begin{array}{ll}
\pi_{s-2n} \Z_p(n)(A) & \text{if $w-n$ is even},
\\
0 & \text{otherwise}
\end{array}
\right.
\]
for every quasisyntomic ring $A$.
Now,
assume that $A$ is one of $k[x]/x^e$ with a perfect field $k$, $\cO_K/\omega^n$, or $\cO_K$,
see Example \ref{TCR.3} for the notation $\cO_K/\omega^n$.
By \cite[Theorem 1.1]{MR4632370} and \cite[Corollary 2.13, Remark 2.17]{AKN},
(3), (4), and (6) in Example \ref{TCR.3} hold for $\Z_p(i)(A)$.
Hence for $s\geq 1$,
we obtain
\[
\pi_{s,w}^{\Z/2}\TCR(A;\Z_p)
\cong
\left\{
\begin{array}{ll}
\pi_s \TC(A;\Z_p)& \text{if $s\equiv 2,3$ $(\mathrm{mod}\; 4)$ and $w$ is even},
\\
\pi_s \TC(A;\Z_p)& \text{if $s\equiv 0,1$ $(\mathrm{mod}\; 4)$ and $w$ is odd},
\\
0 & \text{otherwise}.
\end{array}
\right.
\]
\end{exm}

\begin{rmk}
If we have a real refinement of the theorem of Dundas-Goodwillie-McCarthy,
then we would have
\begin{gather*}
\KR(k[x]/x^e;\Z_p)
\simeq
\tau_{\geq 0}\TCR(k[x]/x^e;\Z_p),
\\
\KR(\cO_K/\omega^n;\Z_p)
\simeq
\tau_{\geq 0}\TCR(\cO_K/\omega^n;\Z_p).
\end{gather*}
Assuming this,
Theorem \ref{TCR.6} computes $\pi_{s,w}^{\Z/2}\KR(k[x]/x^e;\Z_2)$,
and Example \ref{TCR.3} computes $\rho_j(\Sigma^2\tau_{\geq 1}\KR(\cO_K/\omega^n;\Z_p))$.
\end{rmk}

\bibliography{SynTCR}
\bibliographystyle{siam}

\end{document}